\newcommand{\vertiii}[1]{{\left\vert\kern-0.1ex\left\vert\kern-0.1ex\left\vert #1 \right\vert\kern-0.1ex\right\vert\kern-0.1ex\right\vert}} 
\def\hlinewd#1{%
  \noalign{\ifnum0=`}\fi\hrule \@height #1 \futurelet
   \reserved@a\@xhline}
\def\el{{\nonumber}}
\def\jump#1{{\llbracket {#1} \rrbracket}}
\newtheorem{thm}{Theorem}
\newtheorem{lemma}{Lemma}
\newtheorem{cor}{Corollary}
\newtheorem{rem}{Remark}
\newtheorem{hyp}{Assumption}
\begin{document}

\begin{frontmatter}

\title{A posteriori error estimates for the finite element approximation \\ of the convection--diffusion--reaction equation \\ based on the variational multiscale concept}

\author[deca,cimne]{Ramon Codina\corref{ca}}
\ead{ramon.codina@upc.edu}

\author[ovgu]{Hauke Gravenkamp}
\ead{hauke.gravenkamp@ovgu.de}

\author[deca,cimne]{Sheraz Ahmed Khan}
\ead{sheraz.ahmed@upc.edu}

\address[deca]{Universitat Polit\`{e}cnica de Catalunya (UPC), Jordi Girona 1-3, Edifici C1, Barcelona, 08034, Spain}
\address[cimne]{International Centre for Numerical Methods in Engineering (CIMNE), Campus Nord UPC, Gran Capit\`a s/n,
Barcelona, 08034, Spain}
\address[ovgu]{Institute of Materials, Technologies and Mechanics, Otto von Guericke University Magdeburg, Germany}

\cortext[ca]{Corresponding author}

\begin{abstract}
In this study, we employ the variational multiscale (VMS) concept to develop a posteriori error estimates for the stationary convection-diffusion-reaction equation. The variational multiscale method is based on splitting the continuous part of the problem into a resolved scale (coarse scale) and an unresolved scale (fine scale). The unresolved scale (also known as the sub-grid scale) is modeled by choosing it proportional to the component of the residual orthogonal to the finite element space, leading to the orthogonal sub-grid scale (OSGS) method. The idea is then to use the modeled sub-grid scale as an error estimator, considering its contribution in the element interiors and on the edges. We present the results of the a priori analysis and two different strategies for the a posteriori error analysis for the OSGS method. Our proposal is to use a scaled norm of the sub-grid scales as an a posteriori error estimate in the so-called stabilized norm of the problem. This norm has control over the convective term, which is necessary for convection-dominated problems. Numerical examples show the reliable performance of the proposed error estimator compared to other error estimators belonging to the variational multiscale family.
\end{abstract}

\begin{keyword}
A posteriori error estimates\sep
Variational multiscale method\sep
Convection-diffusion-reaction equation\sep
Orthogonal sub-grid scales
\end{keyword}

\end{frontmatter}

\section{Introduction}

The finite element (FE) method is a numerical technique to approximate solutions of partial differential equations computationally and is widely used to solve engineering problems. In fluid mechanics, applying the standard Galerkin FE method proves to be challenging in two main ways. Firstly, the numerical approximation of convection-diffusion type problems lacks stability when convection is dominant and exhibits numerical oscillations. Secondly, it is frequently difficult to satisfy the inf-sup stability condition for mixed interpolations. Alternatively, stabilized FE methods can be used, which contain additional stabilization terms and provide proper stability without the need to satisfy the inf-sup conditions. 
Moreover, the singularly perturbed nature of the continuous problem causes the stability bound to explode when diffusion (or viscosity) approaches zero. This drawback is mitigated when stabilized methods are employed. The addition of new stabilization terms enables control over the norm of the convective term and enhances accuracy.

A comprehensive comparison of stabilization techniques applied to the convection-diffusion-reaction equation (CDRE) can be found in \cite{codina1998comparison}. The variational multiscale (VMS) method introduced by Hughes et al.~\cite{hughes1995multiscale,hughes1998variational} serves as the foundation to derive the stabilized method. The point of departure is the notion that the FE approximation cannot capture the solution precisely, hence, the VMS method is based on splitting the continuous solution into a resolved scale (coarse scale) and an unresolved scale (fine scale), where the unresolved scale, also known as the sub-grid scale (SGS), is represented or estimated analytically. The SGS model we have used is known as the orthogonal SGS (OSGS) method, introduced in \cite{codina2000stabilization}. In the OSGS approach, the SGS problem is modeled in a specific fashion by choosing the SGSs orthogonal to the FE space. The stability and convergence analysis of this method for the stationary and transient CDRE are presented in \cite{codina2011finite} and \cite{codina2002analysis}, respectively. The complete analysis for the Oseen problem, including non-uniform FE meshes, can be found in~\cite{codina2008analysis}. A detailed discussion of the VMS method and its application to computational fluid dynamics problems can be found in \cite{codina2018variational}. 
While substantial expertise has been developed in solving physical problems, it has remained a computational challenge to predict and control the error of the computed solutions. In recent years, a posteriori error estimation has made significant progress, becoming an essential tool for FE practitioners. An overview of the topic is given by Ainsworth and Oden in \cite{ainsworth1997posteriori}. See also \cite{verfurth2013posteriori} for an overview of a posteriori error estimation using the Galerkin method, including problems related to fluid mechanics. A recent survey on the development of a posteriori error estimation in fluid mechanics based on the VMS theory has been conducted by Hauke and Irisarri in~\cite{hauke2023review}. In general, a posteriori error estimation is a broad discipline that originated with Babu\v ska and Rheinbolt \cite{babuvska1978posteriori} for elliptic problems and was later successively expanded by Zienkiewicz and Zhu \cite{zienkiewicz1987simple,zienkiewicz1991adaptivity}, Eriksson and Johnson \cite{eriksson1988adaptive}, and Ainsworth and Oden \cite{ainsworth1993unified}, among others. In addition, Verf\"{u}rth investigated a posteriori error estimates for the Stokes problem \cite{verfurth1989posteriori}, for the convection-diffusion problem \cite{verfurth1998posteriori}, and for elliptic problems \cite{verfurth1994posteriori}. 
Moreover, we refer to the recent work using different numerical methods for the Navier-Stokes problem in \cite{camano2022posteriori}, for semi-linear elliptic problems in \cite{chen2023posteriori}, and for linear and non-linear singularly perturbed problems in \cite{clavero2025priori}. 
Previously, several studies have been conducted to estimate the a posteriori error for convection-diffusion-type problems. In \cite{tobiska2015robust}, an a posteriori error analysis was performed for the stationary and transient CDRE using stabilized FE schemes. In this reference, error estimates are measured in an energy-like norm for the symmetric part of the differential operator, while a dual norm is employed to bound the convective term. Likewise, Verf\"{u}rth used the same strategy to measure the error for stationary CDREs in \cite{verfurth2005robust}, for non-stationary CDREs in \cite{verfurth2005robust2}, and for non-linear non-stationary CDREs in \cite{verfurth2018posteriori}. A similar approach has been by followed by Sharma \cite{sharma2021robust} to explore a posteriori error estimates for the weak Galerkin method applied to the CDRE. Moreover, a similar choice of norm has been made by Ainsworth et al.\ in \cite{ainsworth2013fully}, where the authors derived a posteriori error estimators using the streamline-upwind Petrov-Galerkin (SUPG) method to approximate solutions of the stationary CDRE in three dimensions. Du et al.~\cite{du2021robust} also investigated recovery-type a posteriori error estimates using the SUPG method for singularly perturbed problems.

Already in the initial development of the VMS method, it was suggested by Hughes et al.~\cite{hughes1998variational} to use the SGSs as an a posteriori error estimator. One of the first attempts to estimate a posteriori the error using the VMS technology for convection-dominated problems was presented in \cite{hauke2006multiscale}, verifying that VMS-based stabilized methods are equipped with an inherent error estimator. In recent years, several studies have been conducted to derive a posteriori error estimates using the VMS method \cite{araya2005adaptive,hauke2006proper,hauke2015variational,hauke2008variational,hauke2015variationalvms}. The same strategy is applied to higher-order elements in \cite{hauke2006application,hauke2006proper} and later extended to 2D domains in \cite{hauke2008variational}, where the error contribution on the element boundaries is also discussed. In addition, error estimators have been derived for the Navier-Stokes equations in \cite{rossi2013parallel,hauke2015variationalvms,irisarri2021posteriori}, for the Stokes equations in \cite{irisarri2018posteriori}, for linear elasticity in \cite{hauke2015variational}, and for higher-order ordinary differential equations in \cite{irisarri2015variational}, all based on the same concepts.

In this study, the VMS method also serves as the foundation for the a posteriori error estimation. We propose such an estimate for the stationary CDRE using the OSGS approach. The key point is that the {\em stabilized norm} of the error (not any other norm) can be estimated by a {\em scaled} norm of the SGSs, the scaling being provided by the so-called stabilization parameters. Baiges et al.~\cite{baiges2017variational} used the same technique to estimate the error for solid mechanics problems and revealed the robust performance of the OSGS-based error estimator. The stabilized norm is precisely the norm in which stability and a priori convergence can be proved. It includes control over the convective term, which is necessary for convection-dominated problems. The same choice of norm has been made to develop the a posteriori error estimates for incompressible Navier-Stokes equations in \cite{codina2021posteriori}. 

The a posteriori error estimator (APEE) we propose is based on two components: the SGSs in the element interiors and on the element boundaries. The representation of the SGSs on the element boundaries was first introduced in \cite{codina2009subscales, codina2011finitebou}.  We provide a theoretical foundation for the proposed error estimates by performing an a posteriori error analysis of the OSGS approximation to the CDRE. We explain why this analysis is not fully satisfactory and develop two strategies that provide partial information, one following the ideas in \cite{verfurth2005robust} and the other close to the analysis in \cite{john-novo-2013}. We continue by presenting some numerical examples to demonstrate the good numerical behavior of the APEE in terms of the effectivity index and discuss its performance.

The paper is organized in the following format. In Section~2, the model problem is stated, and its functional setting is introduced along with its FE approximation, both the Galerkin method and the stabilized FE formulation following the VMS concept. Section~3 presents the main results of the a priori analysis. In Section~4, the a posteriori error analysis is conducted using two different strategies, explaining the limitations of each of them. Section~5 includes numerical examples illustrating the performance of the stabilized formulation and the APEE. Finally, the conclusions of this study are presented in Section~6.

\section{Continuous problem and finite element approximation}

\subsection{Statement of the problem}

Let $\Omega \subset \mathbb{R}^d$ be an open bounded domain, where $d$ represents the number of spatial dimensions. The boundary of the domain is denoted by $\Gamma$. The boundary value problem we consider consists of finding $u:\Omega\rightarrow \mathbb{R}$ such that
\begin{align} \label{modprob}
  \mathcal{L}u :=      -k \Delta u + a \cdot \nabla u + su &= f \qquad \text{in} \quad  \Omega \\
        u &= 0 \qquad  \text{on} \quad \Gamma
\end{align}
where $k > 0$ is the diffusion coefficient, $a \in \mathbb{R}^d$ is the advection velocity, $s \geq 0$ is the reaction coefficient, and, on the right-hand side, $f$ is the forcing term. 

Our objective in this work is to present an a posteriori analysis {\em in the simplest possible setting}, trying to emphasize the critical points without distracting ourselves with technicalities that, despite being relevant, can be found elsewhere. In particular, we will consider $k$, $a$ and $s$ to be constants and limit the discussion to homogeneous Dirichlet conditions. Furthermore, $f$ will be assumed to be an FE function (see below).

Let us introduce some notation for the functional setting of the problem. For a region $\omega\subset \mathbb{R}^d$, we denote by $L^2(\omega)$ the space of square integrable functions in $\omega$ and by $H^m(\omega)$ the space of functions with derivatives of order up to $m \in \mathbb{N}$ in $L^2(\omega)$, with $H^1_0(\omega)$ the space of functions in $H^1(\omega)$ vanishing on $\partial\omega$, its dual being $H^{-1}(\omega)$. The inner product in $L^2(\omega)$ is denoted as $(\cdot,\cdot)_\omega$, and the norm as $\Vert \cdot \Vert_\omega$. The duality pairing based on the integral is denoted by $\langle\cdot,\cdot\rangle_\omega$. In all cases, the subscript is omitted when $\omega= \Omega$. For any other Banach space $X$, its norm is written as $\Vert \cdot \Vert_X$, and if $X$ is endowed with a semi-norm we denote it by $\vert \cdot\vert_X$, the exception being $\Vert \cdot \Vert_{H^m(\Omega)} \equiv \Vert \cdot \Vert_{H^m}$, $m\in \mathbb{Z}$. 

Setting $V_c= H^{1}_{0}(\Omega)$, the variational form of the problem can be written as: find $u\in V_{c}$ such that
\begin{equation} \label{variform}
	B(u,v)=L(v)\ \ \ \ \ \ \forall v\in V_{c} 
\end{equation}
with
\begin{align*}
    B(u,v)&=k (\nabla u, \nabla v)+(a \cdot \nabla u, v)+s(u,v)\\
	L(v)&=\left\langle f,v\right\rangle
\end{align*}
where $B$ and $L$ represent the bilinear and linear forms of the problem, respectively; $B$ is defined on $H^{1}_{0}(\Omega) \times H^{1}_{0}(\Omega)$ and $L$ on $H^{1}_0(\Omega)$.

\subsection{Galerkin finite element approximation}

Let ${\cal T}_h = \{K\}$ be an FE partition of $\Omega$. This mesh will be considered throughout as quasi-uniform with diameter $h$.
In the following, we will consider a space smaller than $V_{\rm c}$ associated to ${\cal T}_h$, defined as
$$V = \{ v\in H^1_0(\Omega) ~\vert~ v\vert_K \in H^2(K) ~ \forall K\in {\cal T}_h\}$$
The collection of interior edges of ${\cal T}_h$ is denoted as ${\cal E}_h = \{ E\}$. Since we consider only homogeneous boundary conditions, we refrain from including edges on $\Gamma$. We shall write 
\begin{align*}
\Vert \cdot \Vert^m_{{\cal T}_h} := \sum_K \Vert \cdot \Vert^m_K, \qquad \Vert \cdot \Vert^m_{{\cal E}_h} := \sum_E \Vert \cdot \Vert^m_E, \quad m = 1, 2
\end{align*}
and an analogous notation is used when norms are replaced by semi-norms. Summation extends over all $K\in {\cal T}_h$ or over all $E \in {\cal E}_h$.

Let $V_h\subset V$ be the approximating FE space for the trial and test functions constructed from ${\cal T}_h$. The variational problem can now be approximated directly using the Galerkin approximation. We construct the finite space $V_h$ as
\begin{equation*}
    V_h=\left\{ v_h \in H^{1}_{0}(\Omega) ~|~ v_h\vert_K  \in \mathbb{P}_{p}(K), p \geq 1, ~K \in {\cal T}_h \right\}
\end{equation*}
where $\mathbb{P}_p(K)$ denotes the set of complete polynomials of degree $p$ in $K\in {\cal T}_h$. Then, the FE approximation of the variational problem \eqref{variform} consists of finding $u_h \in V_{h}$ such that
\begin{equation} \label{finapprox}
	B(u_h,v_h)=L(v_h)\quad \forall v_h\in V_{h} 
\end{equation}

\subsection{Stabilized finite element formulation}

The formulation we shall analyze is based on the VMS concept, which relies on the splitting $V = V_h\oplus V'$. The distinctive feature of our approach is that we take the space of SGSs $V'$ as $L^2$-orthogonal to the finite element space $V_h$, i.e., $V' = V_h^\bot$, leading to the OSGS method. We do not provide a detailed motivation for the formulation here, as it is discussed in \cite{codina2000stabilization,codina2002stabilized,codina2018variational} regarding the approximation of the SGSs in the element interiors and in \cite{codina2009subscales} for the approximation of the SGSs on the element boundaries. Here, we directly state the final version of the method.

We consider $u = u_h + u'$, with $u'_K$ an approximation to $u'$ in the interior of the elements and $u'_E$ on the edges. These approximations are given by:
\begin{alignat}{3}
& u'_K = \tau_K P_h^\bot (R_K) && \hbox{SGSs in the element interiors}  \label{eq:SGSs1}\\
& u'_E = \tau_E   R_E && \hbox{SGSs on the element edges}  \label{eq:SGSs2}\\
& R_K := [ f - (-k \Delta u_h + a\cdot\nabla u_h + s u_h)]\vert_K\quad  && \hbox{Residual in the element interiors}  \nonumber \\
& R_E :=  k \jump{\partial_n u_h}\vert_E && \hbox{Residual on the element edges}  \nonumber \\
& \tau_K = \left( c_1 \frac{k}{h^2} +  c_2  \frac{\vert a\vert  }{h}  + c_3 s\right)^{-1}&& \hbox{Stabilization parameter in the element interiors} \label{eq:tauK} \\
& \tau_E = c_4  \frac{1}{h}  \tau_K && \hbox{Stabilization parameter on the element edges} \label{eq:tauE}
\end{alignat}
In these expressions, $c_1$, $c_2$, $c_3$, $c_4$ are algorithmic constants that depend on the order of the polynomial $p$ of the finite element approximation, which will be considered constant. These constants are related to the inverse estimate \eqref{eq:invest} stated later, which determines how they have to be chosen. In the numerical examples, we take these constants as $c_1 = 4$, $c_2 = 2$, $c_3 = 1$, and $c_4 = 1/3$ for linear elements. The symbol $\jump{\cdot}$ denotes the jump along the normal, and $\partial_n u = n \cdot \nabla u$ is the derivative in the direction of the normal $n$ to $E$, exterior to an element $K$ such that $E\subset\partial K$.

The OSGS formulation we consider is: find $u_h\in V_h$ such that
\begin{align}
& B_{\rm stab}(u_h,v_h) := B(u_h,v_h) +S(u_h,v_h)  =  \langle f , v_h\rangle \qquad  \forall v_h \in V_h \label{eq:osgs}
\end{align}
where the stabilization terms are
\begin{align}
S(u,v) :=  \sum_K \tau_K \langle k \Delta v + a\cdot\nabla v - s v ,  P_h^\bot ( -k \Delta u + a\cdot\nabla u + s u )\rangle_K 
- \sum_E \tau_E  \left\langle k\jump{\partial_n u} , k\jump{\partial_n v}\right\rangle_E \label{eq:stab-term}
\end{align}
with $P_h^\bot  = I - P_h$, and $P_h$ is the projection defined by 
\begin{align*}
\sum_K \langle v_h , P_h w \rangle_K  = \sum_K \langle v_h , w\rangle_K \quad \forall v_h \in V_h, ~\hbox{with}~w\vert_K \in L^2(K) ~\forall K\in {\cal T}_h
\end{align*}
Again, for the sake of simplicity, we assume in this paper that $f$ is an FE function, and therefore $P_h^\bot(f) = 0$. Otherwise, in the a posteriori error estimates to be obtained, there would be a contribution stemming from the FE approximation of $f$. The same applies if non-homogeneous Dirichlet conditions or Neumann conditions are applied with data that do not belong to the appropriate FE space. Note that the first term in \eqref{eq:stab-term} can be written as
\begin{align}
\sum_K \tau_K \langle k \Delta v + a\cdot\nabla v - s v ,  P_h^\bot ( -k \Delta u + a\cdot\nabla u + s u )\rangle_K 
= \sum_K \tau_K \langle  - {\cal L}^* v , P_h^\bot ( {\cal L} u - f) \rangle_K
= \sum_K  \langle   {\cal L}^* v , u'_K \rangle_K \label{eq:SGSsf}
\end{align}
where ${\cal L}^* v = -k \Delta v - a\cdot\nabla v + s v$, i.e., ${\cal L}^*$ is the adjoint of ${\cal L}$.

\begin{rem} When evaluated with FE functions, the terms $-sv$ and $su$ in $S(u,v)$ do not need to be included, as their projection orthogonal to the FE space is zero. However, they are needed for consistency, i.e., to guarantee that $B_{\rm stab}(u,v) = L(v)$ when $u$ is the solution of the continuous problem.
\end{rem}

\begin{rem} The sign of the boundary term in $S(u,v)$ is negative and can therefore be deleted, since it deteriorates stability rather than contributing to it, and consistency is ensured without this term. However, a similar term appears, for example, in the Stokes problem, where the boundary term is necessary to stabilize discontinuous pressures~\cite{codina2009subscales}.
\end{rem}

In the numerical examples, we will also consider the possibility of dropping $P_h^\bot$ in expression~\eqref{eq:SGSsf}, leading to a classical residual-based stabilized method that we will call Algebraic SGS (ASGS) formulation~\cite{codina2018variational}.

\section{A priori analysis}

\subsection{Notation and preliminaries}

We shall make frequent use of the inverse inequality
\begin{align}
\Vert \nabla v_h\Vert_K \leq {C_{\rm inv}} \frac{1}{h} \Vert v_h\Vert_K\label{eq:invest}
\end{align}
as well as the trace inequality in the form (see \cite{veeser-verfurth-2009}, Corollary 4.5):
\begin{align}
\Vert v \Vert^2_{\partial K} \leq {C_{\rm trace}} \left( \frac{1}{h} \Vert v\Vert^2_K +  \Vert v \Vert_K \Vert \nabla v \Vert_K\right) \label{eq:tracein}
\end{align}
for $v\in V$, $C_{\rm inv}$ and $C_{\rm trace}$ being positive constants. Using Young's inequality for the second term, for any $\alpha > 0$ we can rewrite this as 
\begin{align}
\Vert v \Vert^2_{\partial K} \leq {C_{\rm trace}} \left[ \left( 1 + \frac{\alpha}{2}\right) \frac{1}{h} \Vert v\Vert^2_K + \frac{1}{2\alpha} {h}\Vert \nabla v \Vert^2_K\right]\label{eq:tracein1}
\end{align}
In what follows, the symbol $\lesssim$ stands for $\leq$ up to positive constants, and likewise $\gtrsim$ stands for $\geq$ up to positive constants. We shall write $\sim$ when both $\lesssim$ and $\gtrsim$ hold, i.e., for quantities with the same asymptotic behavior. In our analysis, we will not track these constants, but in all cases, we make sure that they are dimensionless and independent of the discretization and the data, including the physical parameters ($k$, $a$, and $s$). 

For piecewise polynomials $v_h$, the last term in \eqref{eq:tracein1} can be dropped using an inverse estimate, and we get:
\begin{align}
\Vert v_h \Vert^2_{\partial K} \lesssim \frac{1}{h} \Vert v_h\Vert^2_K \label{eq:tracein2}
\end{align}

For smooth enough functions $u$, we will make use of the interpolation estimate
\begin{align}
\inf_{\tilde{u}_h\in V_h} \Vert u - \tilde{u}_h\Vert_{H^s} \lesssim {h^{p+1-s}}\Vert u \Vert_{H^{p+1}}\label{eq:intest}
\end{align}

We denote by $\tilde{K}$ the union of the elements in contact with $K$ and by $\tilde{E}$ the union of the elements that share $E$. Thus, if $K_1$ and $K_2$ are two neighboring elements, $K_1\cap K_2 = E$, $K_1 \cup K_2 = \tilde{E}$. For any piecewise polynomial function $v_h$ vanishing on $\partial{\tilde{E}}$ and taking the maximum on $E$, we have that
\begin{align}
\Vert v_h \Vert^2_{\tilde{E}} \lesssim {h} \Vert v_h \Vert^2_{E} \label{eq:maxE}
\end{align}

Let us observe that, for $a,b > 0$:
\begin{align}
&\frac{1}{a+b} \leq \min\left\lbrace \frac{1}{a} ,  \frac{1}{b}\right\rbrace \leq {2} \frac{1}{a+b}\label{eq:minab}
\end{align}
This trivial observation is important from the implementation point of view, since it is always convenient to avoid the ``ifs" involved in taking the minimum. 

Let us define the classical stabilized norm:
\begin{align}
\vertiii{v}^2 := k \Vert \nabla v\Vert^2 + s\Vert v\Vert^2 + \tau_K \Vert  a\cdot\nabla v \Vert^2  \label{eq:stab-norm}
\end{align}
This norm can be localized to element domains in the natural way:
\begin{align*}
\vertiii{v}^2_K := k \Vert \nabla v\Vert^2_K + s\Vert v\Vert^2_K + \tau_K \Vert  a\cdot\nabla v \Vert_K^2  
\end{align*}

Finally, let us also introduce the interpolation error $i_u := u - P_h u = P_h^\bot u$. 

\subsection{A quasi inf-sup stability result}

Let us start with a technical result. It turns out that the normal derivative on the element faces of a function $u\in V$ can be bounded as follows:

\begin{lemma} \label{du-on-dK}
For any $u\in V$ there holds
\begin{align}
\Vert \partial_n u \Vert_{\partial K} \lesssim \frac{1}{h^{1/2}} \Vert \nabla u \Vert_K + {h^{1/2}} \vert i_u  \vert_{H^2(K)}, \quad K \in {\cal T}_h\label{eq:du-on-dK}
\end{align}
\end{lemma}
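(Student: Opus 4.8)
The plan is to split $u$ into its finite element component $P_h u$ and the remainder $i_u = P_h^\bot u = u - P_h u$, bound the normal derivative of each piece separately, and recombine. Since $|\partial_n w| \le |\nabla w|$ pointwise on $\partial K$ (because $|n|=1$), it suffices to control $\Vert\nabla P_h u\Vert_{\partial K}$ and $\Vert\nabla i_u\Vert_{\partial K}$. The trace and inverse estimates collected in the preliminaries are elementwise, so I may apply them to each scalar component $\partial_{x_i}(\cdot)$ of a gradient and sum over $i$, using $\sum_i \Vert\nabla\partial_{x_i} i_u\Vert^2_K = |i_u|^2_{H^2(K)}$.

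First I would treat the finite element part. Because $P_h u\in V_h$ is a piecewise polynomial, so is each component of $\nabla P_h u$, and the inverse-estimate form of the trace inequality \eqref{eq:tracein2} applies directly to give $\Vert\nabla P_h u\Vert_{\partial K}\lesssim h^{-1/2}\Vert\nabla P_h u\Vert_K$. For the remainder $i_u$, which is only in $H^2(K)$, I would instead use the full trace inequality \eqref{eq:tracein1} with $\alpha$ fixed (say $\alpha=1$), applied to each component of $\nabla i_u\in H^1(K)$; this produces $\Vert\nabla i_u\Vert_{\partial K}\lesssim h^{-1/2}\Vert\nabla i_u\Vert_K + h^{1/2}|i_u|_{H^2(K)}$, where the second derivatives appear precisely as the semi-norm $|i_u|_{H^2(K)}$ sought in \eqref{eq:du-on-dK}. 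Adding the two contributions already yields a bound of the form $h^{-1/2}\bigl(\Vert\nabla P_h u\Vert_K + \Vert\nabla i_u\Vert_K\bigr) + h^{1/2}|i_u|_{H^2(K)}$.

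It then remains to absorb the two $L^2$ gradient norms into $\Vert\nabla u\Vert_K$, and this is the decisive ingredient: the $H^1$-stability of the $L^2$-orthogonal projection $P_h$ on the quasi-uniform family ${\cal T}_h$, i.e. $\Vert\nabla P_h u\Vert\lesssim\Vert\nabla u\Vert$. Granted this, $\Vert\nabla P_h u\Vert_K\lesssim\Vert\nabla u\Vert_K$ and, by the triangle inequality, $\Vert\nabla i_u\Vert_K\le\Vert\nabla u\Vert_K+\Vert\nabla P_h u\Vert_K\lesssim\Vert\nabla u\Vert_K$, which closes the estimate.

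I expect this last absorption to be the only genuine obstacle. The trace and inverse inequalities are purely local, but passing from $\Vert\nabla P_h u\Vert_K$ to $\Vert\nabla u\Vert_K$ relies on the stability of the \emph{global} $L^2$ projection, which is exactly where the quasi-uniformity of the mesh enters. If a strictly element-local right-hand side is insisted upon, I would invoke the local $H^1$-stability of $P_h$ on the patch $\tilde K$ (replacing $\Vert\nabla u\Vert_K$ by $\Vert\nabla u\Vert_{\tilde K}$), or simply note that the lemma is ultimately used after summation over $K\in{\cal T}_h$, where the global bound suffices. As a sanity check, for $p=1$ one has $\nabla^2 P_h u=0$ on each $K$, so $|i_u|_{H^2(K)}=|u|_{H^2(K)}$ and the finite element part carries no $H^2$ contribution at all, consistent with the structure of \eqref{eq:du-on-dK}.
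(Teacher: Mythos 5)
Your proof is correct and is essentially the paper's own argument: the same decomposition $u = P_h u + i_u$, the trace inequality \eqref{eq:tracein1}, the inverse estimate applied to the polynomial piece, and the $H^1$-stability of $P_h$ to absorb $\Vert \nabla P_h u\Vert_K$ (the paper applies the trace inequality once to $u$ and splits only the Hessian term, whereas you split first and treat the two pieces via \eqref{eq:tracein2} and \eqref{eq:tracein1} respectively, which is a cosmetic reordering of the same steps). Your closing remark on the local-versus-global character of the $H^1$-stability is a point the paper's proof glosses over in exactly the same way, and your observation that the lemma is only invoked after summation over $K$ resolves it correctly.
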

\begin{proof}
From the trace inequality \eqref{eq:tracein1} with $\alpha = 1$ we have that
\begin{align}
\Vert \partial_n u \Vert_{\partial K}
& \lesssim \frac{1}{h^{1/2}} \Vert \nabla u \Vert_K  + {h^{1/2}} \Vert \nabla  \nabla u \Vert_K \nonumber \\
& \lesssim \frac{1}{h^{1/2}} \Vert \nabla u \Vert_K + {h^{1/2}} \Vert \nabla \nabla i_u  \Vert_K + h^{1/2} \Vert \nabla \nabla P_h u \Vert_K \nonumber\\
& \lesssim \frac{1}{h^{1/2}} \Vert \nabla u \Vert_K + {h^{1/2}} \Vert \nabla \nabla i_u  \Vert_K +  \frac{1}{h^{1/2}}  \Vert \nabla P_h u \Vert_K\nonumber
\end{align}
The inverse inequality has been used in the last step. The result follows from the $H^1$-stability of $P_h$.
\end{proof}

As it will be discussed in the following section, the main difficulty in the a posteriori error analysis is that $B_{\rm stab}$ is neither continuous nor stable in the stabilized norm \eqref{eq:stab-norm} {\em in the whole space $V$}. Let us concentrate now on stability. In \cite{badia-codina-2006-1}, the concept of $\Lambda$-coercivity was introduced, which immediately leads to inf-sup stability. We will show now that we can prove $\Lambda$-coercivity for $B_{\rm stab}$ (and therefore inf-sup stability) {\em up to the interpolation error}.

\begin{thm}[Quasi inf-sup stability for $B_{\rm stab}$ in $V$]\label{thm1} Suppose that the algorithmic constants $c_1$ and $c_2$ are sufficiently large, $c_3 \leq 1$ and $c_4$ is sufficiently small. Then, for any $u\in V$ there exists $v = \Lambda (u) = u + \tilde{u}_h$, with $\tilde{u}_h\in V_h$, such that 
\begin{align*}
B_{\rm stab}(u,\Lambda(u)) \gtrsim  \vertiii{u}^2  - \zeta^2 (i_u )
\end{align*}
where 
\begin{align*}
\zeta^2 (i_u ) :=  \sum_K \tau_K k^2 \vert  i_u   \vert^2_{H^2(K)} + \sum_E \tau_E k^2 \Vert \jump{\partial_n i_u }\Vert_E^2
\end{align*}
\end{thm}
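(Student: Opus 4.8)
The plan is to prove $\Lambda$-coercivity directly, by exhibiting the test function $v=\Lambda(u)=u+\tilde u_h$ and estimating $B_{\rm stab}(u,\Lambda(u))$ from below. I would split the computation into two parts, $B_{\rm stab}(u,u)$ and the correction $B_{\rm stab}(u,\tilde u_h)$, the second being designed to recover control of the finite-element part of the convective term, which the choice $v=u$ alone cannot reach.

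For the first part I would exploit the symmetric/skew structure of the operator. Since $a$ is constant and $u$ vanishes on $\Gamma$, the Galerkin convective term is skew and integrates to zero, so $B(u,u)=k\|\nabla u\|^2+s\|u\|^2$. For the stabilization term I write $\mathcal{L}=\mathcal{L}_{\rm s}+\mathcal{L}_{\rm ss}$ with symmetric part $\mathcal{L}_{\rm s}u=-k\Delta u+su$ and skew part $\mathcal{L}_{\rm ss}u=a\cdot\nabla u$, use $-\mathcal{L}^\ast=-\mathcal{L}_{\rm s}+\mathcal{L}_{\rm ss}$, and invoke the self-adjointness and idempotency of $P_h^\bot$ on the mesh. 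The two mixed products cancel by symmetry of the $L^2$ inner product, leaving the identity
\[
B_{\rm stab}(u,u)=k\|\nabla u\|^2+s\|u\|^2+\tau_K\|P_h^\bot(a\cdot\nabla u)\|^2-\tau_K\|P_h^\bot(-k\Delta u+su)\|^2-\sum_E\tau_E k^2\|\jump{\partial_n u}\|_E^2 .
\]
Compared with the stabilized norm \eqref{eq:stab-norm}, the skew part already supplies $\tau_K\|P_h^\bot(a\cdot\nabla u)\|^2$, the $L^2$-orthogonal component of the convective control, while the last two negative terms must be absorbed.

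To absorb the negative terms I would insert $u=P_hu+i_u$. In $P_h^\bot(-k\Delta u+su)$ the part $sP_hu$ disappears because $P_hu\in V_h$. The finite-element diffusion part $kP_h^\bot\Delta P_hu$ is controlled by the inverse inequality \eqref{eq:invest} through $\tau_K k^2\|\Delta P_hu\|^2\lesssim(\tau_K k/h^2)\,k\|\nabla u\|^2\le c_1^{-1}k\|\nabla u\|^2$, hence a small fraction of the diffusive norm when $c_1$ is large; expanding the square and using Young's inequality, the interpolation diffusion part produces exactly $\tau_K k^2|i_u|^2_{H^2(K)}$, which is collected into $\zeta^2(i_u)$, and the remaining reaction contribution is bounded via $\tau_K s\le c_3^{-1}$ with $c_3\le 1$. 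The edge term is treated the same way: with $\jump{\partial_n u}=\jump{\partial_n P_hu}+\jump{\partial_n i_u}$, the interpolation jump $\tau_E k^2\|\jump{\partial_n i_u}\|_E^2$ enters $\zeta^2(i_u)$, while the polynomial jump is estimated by the trace inequality \eqref{eq:tracein1} and an inverse estimate as $\tau_E k^2 h^{-1}\|\nabla P_hu\|^2\lesssim c_4 c_1^{-1}k\|\nabla u\|^2$, a small fraction of the diffusive norm when $c_4$ is small. For the convective recovery I would take $\tilde u_h=\beta\,\tau_K P_h(a\cdot\nabla u)\in V_h$ with a fixed small $\beta$; the Galerkin convective term then yields $\beta\tau_K\|P_h(a\cdot\nabla u)\|^2$, which together with the orthogonal part completes $\tau_K\|a\cdot\nabla u\|^2$ up to the factor $\beta$. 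The couplings induced by $\tilde u_h$ — the diffusive $\tau_K k(\nabla u,\nabla P_h(a\cdot\nabla u))$, the reactive $\tau_K s(u,P_h(a\cdot\nabla u))$, and $S(u,\tilde u_h)$ — are all bounded by Young's inequality and inverse estimates, each leaving an $O(\beta^2)$ share of $\tau_K\|P_h(a\cdot\nabla u)\|^2$ (through $\tau_K k/h^2\le c_1^{-1}$, $\tau_K|a|/h\le c_2^{-1}$, $\tau_K s\le c_3^{-1}$) plus small multiples of $k\|\nabla u\|^2$ and $s\|u\|^2$. Fixing the constants in order — $c_1,c_2$ large and $c_4$ small for the finite-element and edge terms, then $\beta$ small enough that the recovered $O(\beta)$ convective control dominates the $O(\beta^2)$ losses — delivers $B_{\rm stab}(u,\Lambda(u))\gtrsim\vertiii{u}^2-\zeta^2(i_u)$.

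I expect the convective recovery to be the main obstacle: precisely because $B_{\rm stab}$ is \emph{not} coercive on all of $V$ in the stabilized norm, the correction $\tilde u_h$ must generate the full convective control while keeping every induced diffusion, reaction, and stabilization coupling absorbable, which forces a delicate balance of $\beta$ against the algorithmic constants $c_1,c_2,c_3,c_4$. The conceptually irreducible residue is exactly $\zeta^2(i_u)$, reflecting the fact that a general $u\in V$ lacks global $H^2$-regularity and inter-element continuity of its normal derivative; it is this term that later prevents a fully satisfactory a posteriori bound.
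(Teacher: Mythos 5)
Your proposal follows the paper's proof in all essentials: the same test function $\Lambda(u)=u+\gamma\,\tau_K P_h(a\cdot\nabla u)$, the same first step bounding $B_{\rm stab}(u,u)$ below by $k\Vert\nabla u\Vert^2+s\Vert u\Vert^2+\tau_K\Vert P_h^\bot(a\cdot\nabla u)\Vert^2-\zeta^2(i_u)$ through the split $u=P_hu+i_u$ with inverse and trace estimates and the bounds $\tau_K k/h^2\le c_1^{-1}$, $\tau_K s\le c_3^{-1}$, $\tau_E\sim c_4 h^{-1}\tau_K$, and the same second step recovering the finite-element part of the convective control via the scaled correction. One point where you are actually cleaner than the paper: the symmetric/skew splitting together with the self-adjointness and idempotency of $P_h^\bot$ does give the \emph{exact} identity $S(u,u)=\tau_K\Vert P_h^\bot(a\cdot\nabla u)\Vert^2-\tau_K\Vert P_h^\bot(-k\Delta u+su)\Vert^2-\sum_E\tau_E k^2\Vert\jump{\partial_n u}\Vert_E^2$ (the cross terms cancel, and $\tau_K$ is element-independent under the paper's quasi-uniform, constant-coefficient assumptions), where the paper only records the corresponding Young-type inequality; likewise $(a\cdot\nabla u,u)=0$ is legitimate since $a$ is constant and $u\in H^1_0(\Omega)$.

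There is, however, one step that would fail as written. You assert that all couplings generated by $\tilde u_h$, including $S(u,\tilde u_h)$, are ``bounded by Young's inequality and inverse estimates.'' But the edge part of $S(u,\tilde u_h)$, namely $\sum_E\tau_E\tau_K\langle k\jump{\partial_n u},\,k\jump{\partial_n P_h(a\cdot\nabla u)}\rangle_E$, contains the jump of $\partial_n u$ for a \emph{generic} $u\in V$, which is not a piecewise polynomial, so no inverse estimate applies to that factor. The paper devotes Lemma~\ref{du-on-dK} to precisely this term, proving $\Vert\partial_n u\Vert_{\partial K}\lesssim h^{-1/2}\Vert\nabla u\Vert_K+h^{1/2}\vert i_u\vert_{H^2(K)}$ via the trace inequality, the split $u=P_hu+i_u$, an inverse estimate, and the $H^1$-stability of $P_h$ --- the same device you used for the edge term of $S(u,u)$, so the repair is available to you, but it must be invoked. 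This also changes your bookkeeping: both the interior part of $S(u,\tilde u_h)$ (through $P_h^\bot(\Delta i_u)$) and its edge part shed additional $\zeta(i_u)$ residues, which your stated list of losses (small multiples of $k\Vert\nabla u\Vert^2$ and $s\Vert u\Vert^2$ plus $O(\beta^2)$ shares of the convective norm) omits; since the target bound already tolerates $-\zeta^2(i_u)$, these residues are harmless, but as stated your absorption argument for $S(u,\tilde u_h)$ is incomplete. Aside from this, your constant balance ($c_1,c_2$ large, $c_4$ small, then $\beta$ small; convective self-coupling controlled through $\tau_K\vert a\vert/h\le c_2^{-1}$ with the $P_h^\bot$ factor charged to the $O(1)$ term from the first step) matches the paper's choice of Young parameters $\alpha_1,\dots,\alpha_4$, the condition $c_2>C_{\rm inv}$, and the final smallness of $\gamma$.
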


\begin{proof} We have that
\begin{align}
B_{\rm stab}(u,u) 
& \gtrsim k\Vert \nabla u\Vert^2 +  s \Vert u \Vert^2 + \tau_K \Vert P_h^\bot (a\cdot \nabla u) \Vert^2 \nonumber\\
&  - \sum_K \tau_K k^2 \Vert P_h^\bot (\Delta u)\Vert^2_K  - \sum_K \tau_K s^2 \Vert P_h^\bot (u)\Vert^2_K -  \sum_E \tau_E \Vert \jump{k \partial_n u}\Vert^2_E\label{eq:ee1}
\end{align}
We cannot use an inverse estimate to control the Laplacian term. However, using the $H^1$-stability of $P_h$ we have that
\begin{align}
\tau_K k^2 \Vert P_h^\bot (\Delta u)\Vert^2_K & \lesssim \tau_K k^2 \Vert P_h^\bot (\Delta i_u)\Vert^2_K +  \tau_K k^2 \Vert P_h^\bot (\Delta P_h u )\Vert^2_K\nonumber\\
&  \lesssim \tau_K k^2 \Vert P_h^\bot (\Delta i_u)\Vert^2_K + \tau_K k^2 \frac{1}{h^2} C^2_{\rm inv}\Vert \nabla (P_h u ) \Vert^2_K\nonumber\\
& \lesssim \tau_K k^2 \Vert P_h^\bot (\Delta i_u)\Vert^2_K + \frac{1}{c_1} C^2_{\rm inv} k  \Vert \nabla u\Vert_K^2\el
\end{align}
Thus, for $c_1$ sufficiently large, the second term can be controlled by the first term in the RHS of \eqref{eq:ee1}. For the last term in \eqref{eq:ee1} we can proceed similarly:
\begin{align}
\tau_E \Vert \jump{k \partial_n u}\Vert^2_E 
& \lesssim \tau_E k^2 \left( \Vert \jump{\partial_n i_u}\Vert^2_E + \Vert \jump{\partial_n P_h u}\Vert^2_E\right)\nonumber\\
& \lesssim  \tau_E k^2 \Vert \jump{\partial_n i_u}\Vert^2_E + c_4 \frac{1}{h} \frac{h^2}{c_1 k} k^2 C^2_{\rm trace} \frac{1}{h} \Vert \nabla u\Vert^2_{\tilde{E}} \nonumber\\
& \lesssim  \tau_E k^2 \Vert \jump{\partial_n i_u}\Vert^2_E + \frac{c_4}{c_1} C^2_{\rm trace}  k \Vert \nabla u\Vert^2_{\tilde{E}}\el
\end{align}
Again, for $c_1$ sufficiently large or $c_4$ sufficiently small, the second term can be controlled by the first term in the RHS of \eqref{eq:ee1}. Finally, the 5th term in the RHS of  \eqref{eq:ee1} can be absorbed by the 2nd, simply requiring $c_3 \leq 1$. Thus, we have that
\begin{align}
B_{\rm stab}(u,u) 
& \gtrsim k\Vert \nabla u\Vert^2 +  s \Vert u \Vert^2 
+ \tau_K \Vert P_h^\bot (a\cdot \nabla u) \Vert^2  - \zeta^2 (i_u)\label{eq:ee2}
\end{align}

Let us now consider $v^0_h = \tau_K P_h(a\cdot \nabla u)$, which is an FE function to which inverse estimates can be applied. We have that
\begin{align*}
B_{\rm stab}(u,v^0_h) & \geq - k \Vert \nabla u \Vert \tau_K \Vert \nabla P_h(a\cdot\nabla u)\Vert + \tau_K \Vert P_h(a\cdot\nabla u_h)\Vert^2
-  s \Vert u \Vert \tau_K \Vert P_h(a\cdot\nabla u) \Vert \\
& - \sum_K \tau_K^2 \Vert k \Delta P_h(a\cdot\nabla u) + a\cdot\nabla P_h(a\cdot\nabla u) \Vert_K \Vert P_h^\bot(- k \Delta u + a\cdot\nabla u + su)\Vert_K \\
& - \sum_E \tau_E \tau_K \Vert k\jump{\partial_n  u} \Vert_E \Vert k\jump{\partial_n P_h(a\cdot\nabla u)} \Vert_E
\end{align*} 
Let us bound the different terms, starting with the first one:
\begin{align}
& k \Vert \nabla u \Vert \tau_K \Vert \nabla P_h(a\cdot\nabla u)\Vert \leq k \tau_K  C_{\rm inv} \frac{1}{h} \Vert \nabla u \Vert \Vert  P_h(a\cdot\nabla u)\Vert \nonumber\\
& \qquad \leq k^{1/2} \tau^{1/2}_K  \Vert \nabla u \Vert \Vert  P_h(a\cdot\nabla u)\Vert \nonumber\\
& \qquad  \leq \frac{1}{2\alpha_1} k  \Vert \nabla u \Vert^2 + \frac{\alpha_1}{2} \tau_K \Vert {P_h(a\cdot\nabla u )} \Vert^2 \nonumber\\
& s \Vert u \Vert \tau_K \Vert P_h(a\cdot\nabla u) \Vert \leq s^{1/2} \tau_K^{1/2}  \Vert u \Vert   \Vert P_h(a\cdot\nabla u) \Vert \nonumber\\
& \qquad \leq \frac{1}{2\alpha_2} s  \Vert  u \Vert^2 + \frac{\alpha_2}{2} \tau_K \Vert { P_h(a\cdot\nabla u)} \Vert^2\nonumber\\
& \sum_K \tau_K^2 \Vert k \Delta P_h(a\cdot\nabla u) + a\cdot\nabla P_h(a\cdot\nabla u) \Vert_K \Vert P_h^\bot(- k \Delta u + a\cdot\nabla u + su)\Vert_K\nonumber\\
& \qquad \leq \sum_K \tau_K^2 \left( k C^2_{\rm inv}\frac{1}{h^2} + C_{\rm inv}\frac{\vert a\vert }{h}\right)\Vert P_h(  a\cdot\nabla u) \Vert_K \nonumber\\
& \qquad \quad \times \sum_K \left( k\Vert P_h^\bot( \Delta i_u) \Vert_K + k \Vert P_h^\bot( \Delta (P_h u)) \Vert_K +  \Vert P_h^\bot(a\cdot\nabla u)\Vert_K 
+ s \Vert P_h^\bot u\Vert_K \right) \nonumber\\
& \qquad \lesssim \tau^{1/2}_K \Vert  P_h(a\cdot\nabla u) \Vert   \left( \zeta(i_u) + \tau^{1/2}_K C_{\rm inv} \frac{1}{h} k\Vert \nabla u \Vert + \tau_K^{1/2} s \Vert u \Vert \right)+ \beta \tau_K \Vert a\cdot\nabla u\Vert^2 \label{eq:conv-11}\\
& \qquad \lesssim \tau^{1/2}_K  \Vert  a\cdot\nabla u \Vert \left( \zeta(i_u) + k^{1/2}\Vert \nabla u \Vert + s^{1/2}\Vert u \Vert\right) 
+ \beta \tau_K \Vert   a\cdot\nabla u\Vert^2 \nonumber\\
& \qquad \lesssim \frac{1}{\alpha_3} k  \Vert \nabla u \Vert^2 + \frac{1}{\alpha_3} \zeta^2(i_u) + \frac{1}{\alpha_3} s  \Vert u \Vert^2+ \left({\alpha_3} 
+ \beta\right) \tau_K \Vert {  a\cdot\nabla u}  \Vert^2 \el
\end{align} 
We have assumed that $c_2 > C_{\rm inv}$, and, thus, the constant $\beta$ is $\beta < 1$. For the last term we have that:
\begin{align*}
& \sum_E \tau_E \tau_K\Vert k\jump{\partial_n  u} \Vert_E \Vert k\jump{\partial_n P_h(a\cdot\nabla u)} \Vert_E 
\lesssim \sum_K \tau_{E} \tau_K k^2 \Vert {\partial_n u}  \Vert_{\partial K}\Vert {\partial_n P_h(a\cdot\nabla u)} \Vert_{\partial K}
\end{align*} 
From Lemma~\ref{du-on-dK} we obtain
\begin{align*}
& \sum_E \tau_E \tau_K\Vert k\jump{\partial_n  u} \Vert_E \Vert k\jump{\partial_n P_h(a\cdot\nabla u)} \Vert_E \nonumber\\
& \qquad \lesssim  \sum_K \tau_{E} \tau_K k^2  \left( \frac{1}{h^{1/2}} \Vert {\nabla u} \Vert_{K} + {h^{1/2}} \vert i_u \vert_{H^2(K)} 
\right) \frac{1}{h^{1/2}}\Vert {\nabla P_h(a\cdot\nabla u)} \Vert_{K} \\
& \qquad \lesssim  \sum_K \tau_{E} \tau_K k^2  \frac{1}{h} \left( \frac{1}{h} \Vert {\nabla u} \Vert_{K} 
+  \vert i_u \vert_{H^2(K)} \right)\Vert P_h(a\cdot\nabla u) \Vert_{K} \\
& \qquad \lesssim  \sum_K \frac{1}{h}\tau^2_K k^2  \frac{1}{h} \left( \frac{1}{h} \Vert {\nabla u} \Vert_{K} 
+  \vert i_u \vert_{H^2(K)} \right)\Vert P_h(a\cdot\nabla u) \Vert_{K} \\
& \qquad \lesssim  \sum_K  \tau_K   k \left( \frac{1}{h} \Vert {\nabla u} \Vert_{K} +  \vert i_u \vert_{H^2(K)} \right) \Vert P_h(a\cdot\nabla u) \Vert_{K}\\
& \qquad \lesssim  \sum_K \left( k^{1/2}   \Vert {\nabla u} \Vert_{K} +  \tau_K^{1/2}   k \vert i_u \vert_{H^2(K)}\right) \tau_K^{1/2} \Vert P_h(a\cdot\nabla u) \Vert_{K} \\
& \qquad \lesssim \frac{1}{\alpha_4}  k \Vert {\nabla u} \Vert^2 +\frac{1}{\alpha_4}\zeta^2(i_u)  + {\alpha_4} \tau_K \Vert P_h(a\cdot\nabla u) \Vert^2
\end{align*} 
Taking $\alpha_i$, $i = 1,2,3,4$ sufficiently small, such that
\begin{align*}
{\alpha_1} + {\alpha_2} + {\alpha_3} + {\alpha_4} + \beta < 1
\end{align*} 
it follows that
\begin{align}
B_{\rm stab}(u,v^0_h) \gtrsim \tau_K \Vert { P_h(a\cdot\nabla u)} \Vert^2 - \tau_K \Vert { P_h^\bot(a\cdot\nabla u)} \Vert^2 - k \Vert \nabla u\Vert^2 - s\Vert u\Vert^2  - \zeta^2 (i_u)\label{eq:stab2}
\end{align}
From (\ref{eq:ee2}) and (\ref{eq:stab2}), taking $\Lambda(u) = u + \gamma v_h^0$, with $\gamma$ sufficiently small, the theorem follows.\end{proof}

\begin{cor}[inf-sup stability for $B_{\rm stab}$ in $V_h$]
For all $u_h\in V_h$ there exists $v_h\in V_h$ such that
\begin{align*}
B_{\rm stab}(u_h,v_h) \gtrsim \vertiii{u_h}\,\vertiii{v_h}
\end{align*}
\end{cor}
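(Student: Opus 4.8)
The plan is to specialize Theorem~\ref{thm1} to the finite element space and to exploit the fact that the obstructive term $\zeta^2(i_u)$ vanishes there. Indeed, for any $u_h\in V_h$ one has $P_h u_h = u_h$, so the interpolation error $i_{u_h} = P_h^\bot u_h$ is identically zero, whence $\zeta^2(i_{u_h}) = 0$. Applying Theorem~\ref{thm1} with $u = u_h$ and setting $v_h := \Lambda(u_h) = u_h + \gamma v^0_h$ therefore yields at once
\begin{align*}
B_{\rm stab}(u_h, v_h) \gtrsim \vertiii{u_h}^2.
\end{align*}
Note that $v_h$ stays in $V_h$ because both $u_h$ and $v^0_h = \tau_K P_h(a\cdot\nabla u_h)$ do. This already provides coercivity; it remains only to convert it into the product form of the inf-sup condition by controlling $\vertiii{v_h}$ in terms of $\vertiii{u_h}$.

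The main step is thus to show $\vertiii{v_h} \lesssim \vertiii{u_h}$. Since $v_h$ is a finite element function, the inverse inequality \eqref{eq:invest} is at our disposal, and I would estimate $\vertiii{v^0_h}$ term by term. Using the $L^2$-stability of $P_h$ one has $\Vert v^0_h\Vert = \tau_K\Vert P_h(a\cdot\nabla u_h)\Vert \le \tau_K\Vert a\cdot\nabla u_h\Vert$, while \eqref{eq:invest} gives $\Vert\nabla v^0_h\Vert \lesssim h^{-1}\Vert v^0_h\Vert$. Rewriting each prefactor through the definition \eqref{eq:tauK} of $\tau_K$, so that $\tau_K k/h^2\lesssim 1/c_1$, $\tau_K|a|/h\lesssim 1/c_2$ and $\tau_K s\lesssim 1/c_3$, one obtains
\begin{align*}
k\Vert\nabla v^0_h\Vert^2 &\lesssim \frac{\tau_K k}{h^2}\,\tau_K\Vert a\cdot\nabla u_h\Vert^2, \quad
s\Vert v^0_h\Vert^2 \lesssim \tau_K s\,\tau_K\Vert a\cdot\nabla u_h\Vert^2,\\
\tau_K\Vert a\cdot\nabla v^0_h\Vert^2 &\lesssim \frac{\tau_K^2|a|^2}{h^2}\,\tau_K\Vert a\cdot\nabla u_h\Vert^2,
\end{align*}
and each right-hand side is controlled by $\tau_K\Vert a\cdot\nabla u_h\Vert^2 \le \vertiii{u_h}^2$. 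This yields $\vertiii{v^0_h}\lesssim\vertiii{u_h}$ and hence $\vertiii{v_h}\le\vertiii{u_h} + \gamma\vertiii{v^0_h}\lesssim\vertiii{u_h}$.

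Combining the two bounds, $B_{\rm stab}(u_h,v_h)\gtrsim\vertiii{u_h}^2\gtrsim\vertiii{u_h}\,\vertiii{v_h}$, which is the claimed inf-sup inequality for the chosen $v_h\in V_h$. The only delicate point, and the one I expect to require care, is the norm bound on the test function: one must verify that each of the three contributions to $\vertiii{v^0_h}$ is absorbed by $\tau_K\Vert a\cdot\nabla u_h\Vert^2$, which hinges precisely on the scaling built into $\tau_K$ in \eqref{eq:tauK}. No smallness conditions on the constants $c_i$ beyond those already assumed in Theorem~\ref{thm1} are needed.
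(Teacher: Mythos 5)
Your proposal is correct and follows essentially the same route as the paper: since $i_{u_h}=0$ for $u_h\in V_h$, Theorem~\ref{thm1} with $v_h=\Lambda(u_h)=u_h+\gamma v_h^0\in V_h$ gives coercivity, and the bound $\vertiii{v_h^0}\lesssim\vertiii{u_h}$, obtained term by term from the inverse estimate, the $L^2$-stability of $P_h$, and the scalings $\tau_K k/h^2\lesssim 1$, $\tau_K\vert a\vert/h\lesssim 1$, $\tau_K s\lesssim 1$ built into \eqref{eq:tauK}, converts it into the product form. The only cosmetic difference is that the paper's displayed expansion of $\vertiii{v_h^0}^2$ also carries an edge-jump term $\tau_E\tau_K^2\Vert k\jump{\partial_n P_h(a\cdot\nabla u_h)}\Vert^2_{{\cal E}_h}$ not present in the definition \eqref{eq:stab-norm}, which your argument consistently omits and which in any case is bounded in the same way.
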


\begin{proof}
When $u_h\in V_h$, the interpolation error is zero, and therefore $\zeta(i_{u_h}) = 0$. Setting $v_h = \Lambda(u_h)$ in the previous theorem, we have that
\begin{align*}
B_{\rm stab}(u_h,v_h) \gtrsim \vertiii{u_h}^2 
\end{align*}
On the other hand, it is immediately checked that
\begin{align}
\vertiii{v_h^0}^2  =  \vertiii {\tau_K P_h(a\cdot\nabla u_h)}^2 & = 
k \tau_K^2 \Vert \nabla P_h(a\cdot\nabla u_h)\Vert^2 
+ s \tau_K^2  \Vert P_h(a\cdot\nabla u_h) \Vert^2 \nonumber\\
& + \tau_K^3 \Vert  a\cdot\nabla P_h(a\cdot\nabla u_h) \Vert^2  +  \tau_E\tau_K^2 \Vert k \jump{\partial_n P_h(a\cdot\nabla u_h)} \Vert^2_{{\cal E}_h} \nonumber\\
& \lesssim \tau_K \Vert a\cdot\nabla u_h \Vert^2 \leq \vertiii{u_h}^2 \label{eq:conv-2}
\end{align} 
and thus $\vertiii{v_h} = \vertiii{u_h + \gamma v^0_h}\lesssim\vertiii{u_h}$. 
\end{proof}

\subsection{Convergence}

Even though it is not used in the a posteriori error analysis, for completeness, we also include an a priori error estimate without proof (see \cite{Codina2018hp}):

\begin{thm}[Convergence] Under the assumptions of Theorem~1, for smooth enough solutions of the continuous problem $u$ there holds:
\begin{align}
\vertiii{u-u_h} \lesssim E(h) :=  \left[ k^{1/2}  
+ \min\Bigl\{ \tau_K^{-1/2} , \frac{\vert a\vert}{k^{1/2}} \Bigr\}  {h}
+ s^{1/2} {h}
+ \tau_K^{1/2}\vert a \vert \right]
 {h^p}\Vert u \Vert_{H^{p+1}} 
 \end{align} 
\end{thm}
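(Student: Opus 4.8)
The plan is to combine the discrete inf-sup stability of the Corollary with the consistency of the OSGS formulation in a standard Strang-type argument, the only genuine work being a term-by-term bound of $B_{\rm stab}$ applied to the interpolation error. Let $\tilde u_h\in V_h$ be the interpolant for which \eqref{eq:intest} holds, and split $u-u_h=\rho-e_h$ with $\rho:=u-\tilde u_h$ and $e_h:=u_h-\tilde u_h\in V_h$. By the triangle inequality $\vertiii{u-u_h}\le\vertiii{\rho}+\vertiii{e_h}$, and the first term follows directly from \eqref{eq:stab-norm} and \eqref{eq:intest}, since $\vertiii{\rho}^2=k\|\nabla\rho\|^2+s\|\rho\|^2+\tau_K\|a\cdot\nabla\rho\|^2\lesssim(k+sh^2+\tau_K|a|^2)h^{2p}\|u\|_{H^{p+1}}^2$, which is already bounded by $E(h)^2\|u\|_{H^{p+1}}^2$. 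Thus everything reduces to estimating $\vertiii{e_h}$.

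For $e_h$ I would first invoke consistency. Since $u$ solves \eqref{modprob}, the interior residual $\mathcal{L}u-f$ vanishes and, for a smooth solution, $\jump{\partial_n u}=0$ on every interior edge, so the stabilization terms in $B_{\rm stab}(u,v_h)$ are zero and $B_{\rm stab}(u,v_h)=B(u,v_h)=L(v_h)=B_{\rm stab}(u_h,v_h)$ for all $v_h\in V_h$. This gives the Galerkin orthogonality $B_{\rm stab}(u-u_h,v_h)=0$, hence $B_{\rm stab}(e_h,v_h)=B_{\rm stab}(\rho,v_h)$. Applying the Corollary to $e_h$ yields $\vertiii{e_h}\lesssim\sup_{v_h\in V_h}B_{\rm stab}(e_h,v_h)/\vertiii{v_h}=\sup_{v_h}B_{\rm stab}(\rho,v_h)/\vertiii{v_h}$, so it remains to show $|B_{\rm stab}(\rho,v_h)|\lesssim E(h)\|u\|_{H^{p+1}}\vertiii{v_h}$.

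The crucial point — and the reason the result cannot be read off from a continuity estimate — is that $B_{\rm stab}$ is not continuous in $\vertiii{\cdot}$ on $V$, so $B_{\rm stab}(\rho,v_h)$ must be bounded by hand, exploiting that $v_h$ is discrete (so the inverse estimate \eqref{eq:invest} applies to its factors) while $\rho$ carries the full interpolation regularity. I would estimate the Galerkin part termwise: $k(\nabla\rho,\nabla v_h)\le k^{1/2}\|\nabla\rho\|\,\vertiii{v_h}\lesssim k^{1/2}h^p\|u\|_{H^{p+1}}\vertiii{v_h}$ and $s(\rho,v_h)\le s^{1/2}\|\rho\|\,\vertiii{v_h}\lesssim s^{1/2}h^{p+1}\|u\|_{H^{p+1}}\vertiii{v_h}$. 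For the convective term I integrate by parts (boundary contributions vanish since $\rho,v_h\in H^1_0$ and $a$ is constant) to get $(a\cdot\nabla\rho,v_h)=-(\rho,a\cdot\nabla v_h)$, which I bound in two ways: either as $\tau_K^{-1/2}\|\rho\|\cdot\tau_K^{1/2}\|a\cdot\nabla v_h\|\le\tau_K^{-1/2}\|\rho\|\,\vertiii{v_h}$, or as $\|\rho\|\cdot|a|\,\|\nabla v_h\|\le\frac{|a|}{k^{1/2}}\|\rho\|\,\vertiii{v_h}$; taking the smaller coefficient and $\|\rho\|\lesssim h^{p+1}\|u\|_{H^{p+1}}$ produces exactly the $\min\{\tau_K^{-1/2},|a|/k^{1/2}\}h$ factor. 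For the stabilization terms I use Cauchy--Schwarz in the $\tau_K$- and $\tau_E$-weighted inner products; the $v_h$ factors $\tau_K^{1/2}\|\mathcal{L}^\ast v_h\|_K$ and $\tau_E^{1/2}k\|\jump{\partial_n v_h}\|_E$ are absorbed into $\vertiii{v_h}$ using \eqref{eq:invest}, \eqref{eq:tracein2} together with $\tau_K k/h^2\le 1/c_1$, $\tau_K s\le 1/c_3$ and $\tau_E=c_4\tau_K/h$, while the $\rho$ factors are estimated by interpolation in $H^2$ via \eqref{eq:tracein1} and \eqref{eq:intest}.

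The hard part is precisely this last estimate of the residual-type factors: the Laplacian contribution $\tau_K k^2\|P_h^\bot\Delta\rho\|_K^2$ cannot be controlled by $\vertiii{\rho}$ and must instead be bounded through $\tau_K^{1/2}k\,|\rho|_{H^2(K)}\lesssim\tau_K^{1/2}k\,h^{p-1}\|u\|_{H^{p+1}}\lesssim k^{1/2}h^p\|u\|_{H^{p+1}}$ (the last step using $\tau_K k\le h^2/c_1$), which is the mechanism that keeps the diffusive stabilization from spoiling the rate; similarly the convective residual gives $\tau_K^{1/2}|a|\,\|\nabla\rho\|\lesssim\tau_K^{1/2}|a|\,h^p\|u\|_{H^{p+1}}$, the reaction residual gives $\tau_K^{1/2}s\,\|\rho\|\lesssim s^{1/2}h^{p+1}\|u\|_{H^{p+1}}$, and the edge residual, controlled by Lemma~\ref{du-on-dK} and the trace inequality, contributes at the $k^{1/2}h^p$ level. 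Collecting all contributions gives $|B_{\rm stab}(\rho,v_h)|\lesssim E(h)\|u\|_{H^{p+1}}\vertiii{v_h}$, hence $\vertiii{e_h}\lesssim E(h)\|u\|_{H^{p+1}}$, and the theorem follows from the triangle inequality.
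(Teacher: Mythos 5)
The paper itself states this theorem \emph{without proof}, deferring to \cite{Codina2018hp}, so there is no in-paper argument to compare against; judged on its own, your Strang-type proof is sound and is the natural route given the machinery the paper develops: split $u-u_h$ into interpolation and discrete parts, use consistency to get Galerkin orthogonality for $B_{\rm stab}$, apply the discrete inf-sup corollary of Theorem~\ref{thm1} to $e_h$, and bound $B_{\rm stab}(\rho,v_h)$ termwise, with the integration by parts $(a\cdot\nabla\rho,v_h)=-(\rho,a\cdot\nabla v_h)$ and the two alternative Cauchy--Schwarz pairings correctly producing the $\min\{\tau_K^{-1/2},\vert a\vert/k^{1/2}\}\,h$ factor, and the scalings $\tau_K k/h^2\lesssim 1$, $\tau_K s\lesssim 1$, $\tau_E=c_4\tau_K/h$ placing every stabilization contribution at the right power of $h$. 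Two glosses deserve tightening, though neither threatens the argument. First, consistency: you assert the interior stabilization term vanishes because $\mathcal{L}u-f=0$, but $S(u,v_h)$ as defined in \eqref{eq:stab-term} contains $P_h^\bot(\mathcal{L}u)$, not $P_h^\bot(\mathcal{L}u-f)$; it vanishes because $P_h^\bot(\mathcal{L}u)=P_h^\bot(f)=0$ under the paper's standing assumption $f\in V_h$ (cf.\ \eqref{eq:SGSsf} and Remark~1), which should be invoked explicitly — otherwise a data-oscillation term appears. Second, \eqref{eq:intest} is a global best-approximation statement with a separate infimum for each Sobolev index, whereas your proof needs a \emph{single} interpolant $\tilde u_h$ satisfying simultaneous local estimates in $L^2(K)$, $H^1(K)$, and the broken $H^2$ seminorm (used both for $\tau_K^{1/2}k\vert\rho\vert_{H^2(K)}$ and, via the trace inequality \eqref{eq:tracein1}, for $\Vert\jump{\partial_n\rho}\Vert_E$); this is standard for the nodal or Scott--Zhang interpolant of a smooth $u$, but it is not literally supplied by \eqref{eq:intest} and should be stated as such.
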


\section{A posteriori analysis}

In view of the fact that the a priori analysis yields stability and convergence in the stabilized norm $\vertiii{\cdot}$, it would be desirable to have an a posteriori error bound also in this norm. However, the classical a posteriori analysis requires stability and boundedness in the whole space $V$ of the bilinear form of the problem, and these properties {\em do not hold} in our case. To explain the difficulty, consider the problem: find $u\in V$ such that
$B_W (u,v) = L(v)$ for all $v\in V$, where $B_W = B$ for the Galerkin method and $B_W = B_{\rm stab}$ for the OSGS method. If we could find norms $\Vert\cdot \Vert_{W_1}$ and $\Vert\cdot \Vert_{W_2}$ such that
\begin{alignat*}{3}
& \hbox{For all $u,v\in V$}\quad  B_W(u,v) \lesssim \Vert u \Vert_{W_1} \Vert v \Vert_{W_2} && \qquad \hbox{Continuity} \\
& \hbox{For all $u\in V$ there exists $v\in V$ such that} \quad  B_W(u,v) \gtrsim \Vert u \Vert_{W_1} \Vert v \Vert_{W_2}  &&\qquad \hbox{inf-sup stability} 
\end{alignat*}
we could perform a more or less standard analysis to obtain an a posteriori estimate for $\Vert u-u_h \Vert_{W_1}$. This analysis would be agnostic of whether $u_h$ has been obtained from the Galerkin method or from the OSGS formulation. However, the bilinear form of the OSGS method is neither continuous nor inf-sup stable in the stabilized norm in the whole space~$V$.

In the following, we will adapt to the OSGS formulation two ideas that can be found in the literature. The first is due to Verf\"urth~\cite{verfurth2005robust}. It consists of taking $B_W = B$ and finding appropriate norms $\Vert\cdot \Vert_{W_1}$ and $\Vert\cdot \Vert_{W_2}$ in which continuity and inf-sup stability holds. The second approach is due to John and Novo~\cite{john-novo-2013}. In this case, $B_W = B_{\rm stab}$ and $\Vert\cdot \Vert_{W_1} = \Vert\cdot \Vert_{W_2} = \vertiii{\cdot}$. However, in this case, neither continuity nor inf-sup stability holds exactly, but up to an interpolation error (see Theorem~\ref{thm1} and Lemma~\ref{lem:for-thm40} below). Apart from adapting the analysis in \cite{verfurth2005robust} and \cite{john-novo-2013} to the OSGS formulation, some of our proofs differ significantly from those in these references.

\subsection{Verf\"urth's approach}\label{sec:verf}

Let us introduce the following norms:
\begin{align}
& \Vert v \Vert^2_G  = k \Vert \nabla v \Vert^2 +  s \Vert v\Vert^2, \qquad \Vert v \Vert^2_{G,\omega} = k \Vert \nabla v \Vert^2_\omega +  s \Vert v\Vert^2_\omega \nonumber\\
& \Vert v \Vert_{G^\ast}  = \sup_{w\in V, v\not = 0} \frac{\langle v , w\rangle}{\Vert w \Vert_G} \nonumber\\
& \Vert v \Vert_S  = \Vert v \Vert_G + \Vert a\cdot\nabla v \Vert _{G^\ast}\el
\end{align}

\begin{lemma}[boundedness of $B$] \label{lem:bB}
There holds:
\begin{align}
& \hbox{For all $u,v\in V$} \quad  B(u,v) \lesssim \Vert u \Vert_{S} \Vert v \Vert_{G} \label{eq:upperb} \\
& \hbox{For all $u\in V$ there exists $v\in V$ such that} \quad  B(u,v) \gtrsim \Vert u \Vert_{S} \Vert v \Vert_{G} \label{eq:lowerb}
\end{align}
\end{lemma}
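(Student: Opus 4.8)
The plan is to prove the two bounds separately, handling the upper bound \eqref{eq:upperb} by elementary Cauchy--Schwarz estimates and the lower bound \eqref{eq:lowerb} by a Verf\"urth-type construction of a supremizing test function. For \eqref{eq:upperb}, I would bound the three terms of $B(u,v)$ individually. The diffusive and reactive terms split symmetrically, $k(\nabla u,\nabla v) \leq (k^{1/2}\Vert\nabla u\Vert)(k^{1/2}\Vert\nabla v\Vert) \leq \Vert u\Vert_G \Vert v\Vert_G$ and likewise $s(u,v) \leq \Vert u\Vert_G\Vert v\Vert_G$, both by Cauchy--Schwarz. The convective term is handled directly from the definition of the dual norm: writing $(a\cdot\nabla u,v) = \langle a\cdot\nabla u,v\rangle \leq \Vert a\cdot\nabla u\Vert_{G^\ast}\Vert v\Vert_G$. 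Collecting the three contributions gives $B(u,v) \lesssim (\Vert u\Vert_G + \Vert a\cdot\nabla u\Vert_{G^\ast})\Vert v\Vert_G = \Vert u\Vert_S\Vert v\Vert_G$.

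The lower bound \eqref{eq:lowerb} is where the real work lies, and the crux is to recover the dual-norm part $\Vert a\cdot\nabla u\Vert_{G^\ast}$ of $\Vert u\Vert_S$, which the natural test function $v=u$ cannot detect. The first ingredient is the skew-symmetry of convection: since $a$ is constant and $u\in H^1_0(\Omega)$ vanishes on $\Gamma$, integration by parts gives $(a\cdot\nabla u,u) = \frac{1}{2}\int_\Gamma (a\cdot n)u^2 = 0$, so that $B(u,u) = k\Vert\nabla u\Vert^2 + s\Vert u\Vert^2 = \Vert u\Vert_G^2$. Testing with $u$ alone therefore controls $\Vert u\Vert_G$ but loses the convective information. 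The second ingredient is an auxiliary function $w$ that exposes $\Vert a\cdot\nabla u\Vert_{G^\ast}$: I would take $w\in V$, scaled so that $\Vert w\Vert_G \sim \Vert a\cdot\nabla u\Vert_{G^\ast}$ and $\langle a\cdot\nabla u,w\rangle \gtrsim \Vert a\cdot\nabla u\Vert_{G^\ast}^2$, obtained by rescaling a near-supremizer of the dual norm (equivalently, the Riesz representative of $\langle a\cdot\nabla u,\cdot\rangle$ for the $G$-inner product).

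Then I would set $v = u + \delta w$ for a small parameter $\delta > 0$ and expand $B(u,v) = \Vert u\Vert_G^2 + \delta\bigl[(u,w)_G + \langle a\cdot\nabla u,w\rangle\bigr]$, where $(u,w)_G := k(\nabla u,\nabla w) + s(u,w)$. The cross term is controlled by Cauchy--Schwarz in the $G$-inner product, $\vert(u,w)_G\vert \leq \Vert u\Vert_G\Vert a\cdot\nabla u\Vert_{G^\ast}$, and absorbed via Young's inequality into a fraction of $\Vert u\Vert_G^2$ plus a fraction of $\Vert a\cdot\nabla u\Vert_{G^\ast}^2$. Choosing $\delta$ small enough that the remaining coefficient of $\Vert a\cdot\nabla u\Vert_{G^\ast}^2$ stays positive yields $B(u,v) \gtrsim \Vert u\Vert_G^2 + \Vert a\cdot\nabla u\Vert_{G^\ast}^2 \sim \Vert u\Vert_S^2$. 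Finally, the triangle inequality gives $\Vert v\Vert_G \leq \Vert u\Vert_G + \delta\Vert w\Vert_G \lesssim \Vert u\Vert_S$, and combining the two estimates produces $B(u,v) \gtrsim \Vert u\Vert_S^2 \gtrsim \Vert u\Vert_S\Vert v\Vert_G$, as required.

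I expect the main obstacle to be the construction of $w$ inside $V$ rather than in $H^1_0(\Omega)$: the cleanest choice is the Riesz representative of $\langle a\cdot\nabla u,\cdot\rangle$ in $(H^1_0(\Omega),(\cdot,\cdot)_G)$, but guaranteeing it lies in $V$ would require elliptic regularity. This is sidestepped by instead selecting a near-supremizer already in $V$ and rescaling, which costs only the implicit constants hidden in $\gtrsim$ and keeps the argument self-contained.
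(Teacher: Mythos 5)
Your proposal is correct and follows essentially the same route as the paper: the upper bound via Cauchy--Schwarz and the dual-norm pairing, and the lower bound via skew-symmetry of the convective term plus a near-supremizer $w\in V$ of $\Vert a\cdot\nabla u\Vert_{G^\ast}$ combined linearly with $u$ as test function. The only (cosmetic) difference is in the bookkeeping: the paper takes $v = 2u + \Vert u\Vert_G\Vert w\Vert_G^{-1}w$ and absorbs the cross term directly, obtaining $B(u,v)\gtrsim \Vert u\Vert_S\Vert u\Vert_G$ with $\Vert v\Vert_G\leq 3\Vert u\Vert_G$, whereas you scale $w$ to $G$-norm $\sim\Vert a\cdot\nabla u\Vert_{G^\ast}$ and absorb via Young's inequality, arriving at the equivalent conclusion.
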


\begin{proof}
Proving \eqref{eq:upperb} is trivial, simply noting that
$$ (a \cdot\nabla u , v ) \leq \Vert a\cdot\nabla u \Vert _{G^\ast} \Vert v \Vert_G$$
For  \eqref{eq:lowerb}, let $u\in V$ be given, and let us pick $w \in V$ such that
$$ (a \cdot\nabla u , w ) \geq \frac{1}{2} \Vert a\cdot\nabla u \Vert _{G^\ast} \Vert w \Vert_G$$
Set now $v = 2 u + {\Vert u \Vert_G} \Vert w \Vert_G^{-1} w$. Then the following holds:
\begin{align}
B(u,v) & = 2 B(u,u) + {\Vert u \Vert_G} \Vert w \Vert_G^{-1} B(u,w) \nonumber\\
& \geq 2 \Vert u \Vert_G^2 +  {\Vert u \Vert_G} \Vert w \Vert_G^{-1} \left[  \frac{1}{2} \Vert a\cdot\nabla u \Vert _{G^\ast} \Vert w \Vert_G 
- {\Vert u \Vert_G} \Vert w \Vert_G \right] \nonumber\\
& \geq \Vert u \Vert_G^2 +  \frac{1}{2}  {\Vert u \Vert_G} \Vert a\cdot\nabla u \Vert _{G^\ast} \nonumber\\
& \geq \frac{1}{2} \Vert u \Vert_S \Vert u \Vert_G\el
\end{align}
The result follows noting that $\Vert v \Vert_G \leq  3 \Vert u \Vert_G$.
\end{proof}

\begin{lemma}[Upper and lower bounds for the solution]\label{lem:ulb}
Let $u \in V$ satisfy
$ B(u,v) = L(v)$ for all  $v \in V$. 
Then
\begin{align}
\Vert L \Vert_{G^\ast } \lesssim \Vert u \Vert_S \lesssim \Vert L \Vert_{G^\ast } 
\end{align}
\end{lemma}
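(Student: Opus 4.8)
The plan is to derive both inequalities directly from the two-sided boundedness established in Lemma~\ref{lem:bB}, reading that lemma as a Banach--Ne\v cas--Babu\v ska equivalence between the norm of the solution and the norm of the data. The only additional ingredient is the definition of the dual norm $\Vert\cdot\Vert_{G^\ast}$, through which both $L$ and the convective part of $u$ are measured; in particular, since $L(v)=\langle f,v\rangle$, one has $\Vert L\Vert_{G^\ast}=\sup_{v\in V}L(v)/\Vert v\Vert_G$, so that testing against the whole of $V$ is all that is needed.

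For the upper bound $\Vert L\Vert_{G^\ast}\lesssim\Vert u\Vert_S$, I would fix an arbitrary $v\in V$ and use that $u$ solves the variational problem, so that $L(v)=B(u,v)$. The continuity estimate \eqref{eq:upperb} then gives $L(v)\lesssim\Vert u\Vert_S\,\Vert v\Vert_G$. Dividing by $\Vert v\Vert_G$ and taking the supremum over $v\in V$, which is precisely the definition of $\Vert L\Vert_{G^\ast}$, yields the claim.

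For the lower bound $\Vert u\Vert_S\lesssim\Vert L\Vert_{G^\ast}$, I would invoke the inf-sup estimate \eqref{eq:lowerb}: for the given $u$ there exists $v\in V$ with $B(u,v)\gtrsim\Vert u\Vert_S\,\Vert v\Vert_G$. Using $B(u,v)=L(v)$ together with the defining bound $L(v)\le\Vert L\Vert_{G^\ast}\,\Vert v\Vert_G$ of the dual norm, I can chain these to get $\Vert u\Vert_S\,\Vert v\Vert_G\lesssim\Vert L\Vert_{G^\ast}\,\Vert v\Vert_G$, and canceling $\Vert v\Vert_G$ (nonzero whenever $u\neq 0$, the case $u=0$ being trivial) gives the result.

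I do not expect any genuine analytical obstacle, since the statement is the standard consequence of two-sided boundedness. The one point requiring a little care is the bookkeeping of the hidden constants: the upper bound carries the continuity constant of \eqref{eq:upperb}, the lower bound the reciprocal of the inf-sup constant of \eqref{eq:lowerb}, and both must be checked to be dimensionless and independent of the discretization and the physical parameters $k$, $a$, $s$, in accordance with the conventions fixed for the symbol $\lesssim$.
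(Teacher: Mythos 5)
Your proposal is correct and follows essentially the same route as the paper: the bound $\Vert L \Vert_{G^\ast}\lesssim \Vert u\Vert_S$ from continuity \eqref{eq:upperb} plus taking the supremum, and the reverse bound by combining the inf-sup estimate \eqref{eq:lowerb} with $B(u,v)=L(v)\leq \Vert L\Vert_{G^\ast}\Vert v\Vert_G$ and cancelling $\Vert v\Vert_G$. Your extra remarks on the trivial case $u=0$ and on the dimensionless constants are fine but not needed beyond what the paper's two-line argument already contains.
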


\begin{proof}
From \eqref{eq:upperb}: for all $v\in V$
$$ \Vert u \Vert_S \Vert v \Vert_G \gtrsim B(u,v) = L(v) \Rightarrow  \Vert u \Vert_S \gtrsim \frac{L(v)}{\Vert v \Vert_G}$$
Taking the sup yields $\Vert L \Vert_{G^\ast } \lesssim \Vert u \Vert_S$. Now, using \eqref{eq:lowerb}, given $u$ there exists $v\in V$ such that
$$ \Vert u \Vert_S \Vert v \Vert_G \lesssim B(u,v) = L(v) \lesssim \Vert L \Vert_{G^\ast } \Vert v \Vert_G$$ from where
$\Vert L \Vert_{G^\ast } \gtrsim \Vert u \Vert_S$ follows. 
\end{proof}

Given $w\in V$, let $R_w: V \longrightarrow \mathbb{R}$ be the residual form, defined as 
$$R_w (v) = L(v) - B(w, v) \quad \forall v \in V$$

\begin{lemma}[Expression of $R_{u_h}$ in terms of residuals]\label{lem3}
There holds:
\begin{align}
& R_{u_h}(v) = \sum_K \langle R_K , v\rangle_K + \sum_E \langle R_E, v\rangle_E  \quad \forall v\in V\label{eq:res1} \\
& R_{u_h}(v-P_h v) = \sum_K \langle P_h^\bot (R_K) , v-P_h v\rangle_K + \sum_E \langle R_E, v-P_h v\rangle_E  \quad \forall v\in V\label{eq:res2}
\end{align}
\end{lemma}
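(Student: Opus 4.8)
The plan is to derive both identities from a single element-wise integration by parts of the Galerkin form, and then to use that the broken projection $P_h$ is really the global $L^2(\Omega)$-orthogonal projection onto $V_h$. First I would prove \eqref{eq:res1}. Starting from $R_{u_h}(v) = \langle f,v\rangle - k(\nabla u_h,\nabla v) - (a\cdot\nabla u_h,v) - s(u_h,v)$ and integrating the diffusive term by parts on each $K$ — legitimate because $u_h\vert_K$ is a polynomial (hence in $H^2(K)$) and $v\in H^1(K)$ — I obtain $k(\nabla u_h,\nabla v)_K = -k(\Delta u_h,v)_K + k\langle \partial_{n_K} u_h,v\rangle_{\partial K}$, with $n_K$ the outward normal to $K$. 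Summing over the mesh, the volume contributions regroup as $\sum_K\langle f - \mathcal{L}u_h,v\rangle_K = \sum_K\langle R_K,v\rangle_K$, while the interfacial contributions become $-\sum_K k\langle\partial_{n_K} u_h,v\rangle_{\partial K}$.

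Next I would convert the element-boundary sum into an edge sum. Since $v\in H^1_0(\Omega)$ has a single-valued trace across interior edges and vanishes on $\Gamma$, the boundary edges drop out (consistently with $\mathcal{E}_h$ containing only interior edges), and on each interior edge $E = K_1\cap K_2$ the two one-sided normal-derivative terms coming from $K_1$ and $K_2$ combine, by the sign convention fixed for $\jump{\cdot}$, into $k\langle\jump{\partial_n u_h},v\rangle_E = \langle R_E,v\rangle_E$. This gives \eqref{eq:res1}. The main place where care is needed is precisely this step: one must keep track of the orientation of the outward normals of the two neighbouring elements and match it against the definition of the jump so that the combined term carries the correct sign.

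Finally, \eqref{eq:res2} follows by specialising \eqref{eq:res1} to the test function $v - P_h v = P_h^\bot v\in V$, which yields $R_{u_h}(v - P_h v) = \sum_K\langle R_K,v-P_h v\rangle_K + \sum_E\langle R_E,v-P_h v\rangle_E$, and then by inserting $P_h^\bot$ in front of $R_K$ in the volume term. The key remark is that $\sum_K\langle\cdot,\cdot\rangle_K$ agrees with the global $L^2(\Omega)$ inner product on $L^2$ functions, so the defining relation of $P_h$ states exactly that $P_h$ is the $L^2(\Omega)$-orthogonal projection onto $V_h$; hence $P_h^\bot = I - P_h$ is self-adjoint and idempotent. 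Using this, $\sum_K\langle R_K,P_h^\bot v\rangle_K = \sum_K\langle P_h^\bot(R_K),P_h^\bot v\rangle_K = \sum_K\langle P_h^\bot(R_K),v-P_h v\rangle_K$, whereas the edge term is unaffected because $P_h$ acts on $d$-dimensional (volume) functions while $R_E$ lives on the $(d-1)$-dimensional edges. I do not expect any genuine obstacle here; the only subtlety is to justify moving $P_h^\bot$ onto $R_K$ in the interior terms while leaving the edge terms untouched, which rests on identifying $P_h$ with the true $L^2(\Omega)$ projection.
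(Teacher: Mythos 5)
Your proof is correct and follows essentially the same route as the paper: element-wise integration by parts of $B(u_h,\cdot)$ to obtain \eqref{eq:res1}, and then the identification of $P_h$ with the global $L^2(\Omega)$-orthogonal projection, so that $\sum_K \langle R_K, P_h^\bot v\rangle_K = \sum_K \langle P_h^\bot(R_K), P_h^\bot v\rangle_K$, to obtain \eqref{eq:res2}. The only difference is expository: you spell out the edge-assembly, the sign convention for $\jump{\cdot}$, and the self-adjointness/idempotency of $P_h^\bot$, details the paper compresses into ``follows directly from integration by parts'' and a one-line identity chain.
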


\begin{proof}
Eq.~\eqref{eq:res1} follows directly from integration by parts, whereas Eq.~\eqref{eq:res2} is obtained noting that
$$\sum_K \langle R_K , v-P_h v\rangle_K = \sum_K \langle R_K , P_h^\bot v \rangle_K  =  \sum_K \langle P_h^\bot (R_K) , P_h^\bot v \rangle_K$$
for any $v\in V$.
\end{proof}

This result, though trivial, highlights how natural it is to introduce the orthogonal projection applied to the residual. In the case of stabilized formulations, it justifies the use of OSGS rather than classical residual-based methods such as ASGS.

\begin{lemma}[Bounding the error in terms of the residual] \label{lem:error-residual}
Let $e = u - u_h$, with $u$ the solution of the continuous problem and $u_h\in V_h$ given. Then
\begin{align}
\Vert R_{u_h} \Vert_{G^\ast } \lesssim \Vert e \Vert_S \lesssim \Vert R_{u_h}  \Vert_{G^\ast } 
\end{align}
\end{lemma}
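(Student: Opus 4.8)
The plan is to reduce the statement to a direct application of the boundedness Lemma~\ref{lem:bB}, exploiting the fact that $u$ solves the continuous problem. The crucial first step is the residual identity: since $B(u,v)=L(v)$ for all $v\in V$,
$$R_{u_h}(v) = L(v) - B(u_h,v) = B(u,v) - B(u_h,v) = B(e,v)\quad \forall v\in V.$$
This is the only place where the defining equation for $u$ enters. Once it is established, the argument mirrors that of Lemma~\ref{lem:ulb}, with $L$ replaced by $R_{u_h}$ and the solution $u$ by the error $e$. I also interpret $\Vert R_{u_h}\Vert_{G^\ast}$ as the dual norm of the functional $R_{u_h}$, namely $\sup_{v\in V,\,v\neq 0} R_{u_h}(v)/\Vert v\Vert_G$, which extends the definition of the $G^\ast$-norm given for $L^2$-elements.

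For the upper bound, I would apply the continuity estimate \eqref{eq:upperb} together with the identity above: for any $v\in V$,
$$R_{u_h}(v) = B(e,v)\lesssim \Vert e\Vert_S\,\Vert v\Vert_G.$$
Dividing by $\Vert v\Vert_G$ and taking the supremum over $v$ yields $\Vert R_{u_h}\Vert_{G^\ast}\lesssim \Vert e\Vert_S$. For the lower bound, I would invoke the inf-sup estimate \eqref{eq:lowerb}: given $e\in V$, there exists $v\in V$ with $B(e,v)\gtrsim \Vert e\Vert_S\,\Vert v\Vert_G$. Using the identity and the definition of the dual norm,
$$\Vert e\Vert_S\,\Vert v\Vert_G\lesssim B(e,v) = R_{u_h}(v)\leq \Vert R_{u_h}\Vert_{G^\ast}\,\Vert v\Vert_G,$$
and cancelling $\Vert v\Vert_G$ gives $\Vert e\Vert_S\lesssim \Vert R_{u_h}\Vert_{G^\ast}$.

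I do not expect any genuine obstacle here: the entire content is the observation $R_{u_h}=B(e,\cdot)$, which turns the claim into exactly the equivalence already proved in Lemma~\ref{lem:bB} applied to $e$ rather than to $u$. The only point needing mild care is the dual-norm interpretation of $\Vert R_{u_h}\Vert_{G^\ast}$ noted above; with that convention both inequalities are immediate, and no tracking of constants beyond those already absorbed in $\lesssim$ and $\gtrsim$ is required.
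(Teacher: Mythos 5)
Your proof is correct and follows essentially the same route as the paper: the key identity $R_{u_h}(v)=B(e,v)$ for all $v\in V$, after which the two-sided bound is exactly Lemma~\ref{lem:ulb} applied to $e$ in place of $u$. The only difference is cosmetic---you unfold the proof of Lemma~\ref{lem:ulb} via \eqref{eq:upperb} and \eqref{eq:lowerb} rather than citing it, and your remark on interpreting $\Vert R_{u_h}\Vert_{G^\ast}$ as a dual norm matches the paper's implicit convention.
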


\begin{proof}
It follows from Lemma~\ref{lem:ulb} and the fact that
$$B(e , v) = B(u - u_h , v) = L(v) - B(u_h, v) = R_{u_h} (v)$$
for any $v\in V$.
\end{proof}

Let us introduce the parameters
\begin{align}
\tau_{K,0} :=\left( c_1 { \frac{k}{h^2} + c_3 s}\right)^{-1}, \quad \tau_{E,0} = c_4 \frac{1}{h} \tau_{K,0} \label{eq:alpha2}
\end{align}
which correspond to $\tau_K$ and $\tau_E$ for $a= 0$. 
In \cite{verfurth2005robust}, the parameter 
$$ \alpha = \min \left\lbrace h k^{-1/2} , s^{-1/2} \right\rbrace $$
is introduced (recall that we assume constant physical properties). In view of \eqref{eq:minab}, it follows that 
\begin{align}
\tau_{K,0}^{1/2} \sim \alpha   \label{eq:alfa-tau}
\end{align}

For any function $v\in V$, we have that
$$ \Vert \nabla (v - P_h v)\Vert_{\tilde{E}} \lesssim \varphi(h) \Vert \nabla v \Vert_{\tilde{E}} $$
with $\varphi \to 0$ as $h\to 0$, for any $E\in {\cal E}_h$. Thus, for $k> 0$ and a fixed $s\geq 0$, arbitrarily large, we may assume that $h$ is small enough, so that
\begin{align}
\varphi(h) \lesssim \frac{\frac{1}{h^2} k }{\frac{1}{h^2} k  + s}  = \frac{1}{h^2} k \tau_{K,0} \label{eq:hyp-lem-intest}
\end{align}

\begin{lemma}[Interpolation estimates]\label{lem:lem-intest}
Suppose that $h$ is small enough so that \eqref{eq:hyp-lem-intest} holds. Then:
\begin{align}
\Vert v - P_h v \Vert_K & \lesssim  \tau_{K,0}^{1/2} \Vert v \Vert_{G,\tilde{K}} \label{eq:ie1} \\
\Vert v - P_h v \Vert_E & \lesssim  \tau_{E,0}^{1/2} \Vert v \Vert_{G,\tilde{E}}  \label{eq:ie2} \\
\Vert P_h v \Vert_{G,K} & \lesssim \Vert v \Vert_{G,\tilde{K}}  \label{eq:ie3}
\end{align}
\end{lemma}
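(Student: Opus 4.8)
The plan is to prove the three bounds separately, establishing \eqref{eq:ie1} first since it feeds directly into the proof of \eqref{eq:ie2}. Throughout I would rely on the standard local approximation and stability properties of the $L^2$-projection on a quasi-uniform mesh: the $O(h)$ approximation estimate $\Vert v - P_h v\Vert_K \lesssim h \Vert \nabla v\Vert_{\tilde K}$, the local $L^2$-stability $\Vert P_h v\Vert_K \lesssim \Vert v\Vert_{\tilde K}$, and the local $H^1$-stability $\Vert \nabla P_h v\Vert_K \lesssim \Vert \nabla v\Vert_{\tilde K}$ already invoked in the proof of Lemma~\ref{du-on-dK}. For \eqref{eq:ie1} I would square and recall from \eqref{eq:alpha2} that $\tau_{K,0}^{-1} = c_1 k/h^2 + c_3 s$, so that it suffices to bound
\begin{align*}
\tau_{K,0}^{-1}\Vert v - P_h v\Vert_K^2 \sim \frac{k}{h^2}\Vert v - P_h v\Vert_K^2 + s\Vert v - P_h v\Vert_K^2 .
\end{align*}
The first summand is controlled by the approximation estimate, producing $k\Vert\nabla v\Vert_{\tilde K}^2$, and the second by local $L^2$-stability, producing $s\Vert v\Vert_{\tilde K}^2$; their sum is exactly $\Vert v\Vert_{G,\tilde K}^2$. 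Estimate \eqref{eq:ie3} is even more direct: writing $\Vert P_h v\Vert_{G,K}^2 = k\Vert\nabla P_h v\Vert_K^2 + s\Vert P_h v\Vert_K^2$ and applying local $H^1$- and $L^2$-stability term by term gives $\lesssim k\Vert\nabla v\Vert_{\tilde K}^2 + s\Vert v\Vert_{\tilde K}^2 = \Vert v\Vert_{G,\tilde K}^2$.

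The edge estimate \eqref{eq:ie2} is the crux and the step I expect to be most delicate, precisely because $e := v - P_h v$ is not piecewise polynomial, so the clean inverse-type trace bound \eqref{eq:tracein2} is unavailable and I must invoke the full trace inequality \eqref{eq:tracein}. Bounding $\Vert e\Vert_E^2 \le \Vert e\Vert_{\partial K}^2$ yields two contributions. The term $\frac1h\Vert e\Vert_K^2$ is handled directly by \eqref{eq:ie1}, producing $\frac1h \tau_{K,0}\Vert v\Vert_{G,\tilde K}^2 \sim \tau_{E,0}\Vert v\Vert_{G,\tilde K}^2$ by the definition of $\tau_{E,0}$. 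The cross term $\Vert e\Vert_K \Vert\nabla e\Vert_K$ is where the smallness hypothesis must enter: I would bound $\Vert e\Vert_K$ by \eqref{eq:ie1} and $\Vert\nabla e\Vert_K$ by the convergence estimate $\Vert\nabla(v - P_h v)\Vert_{\tilde E}\lesssim \varphi(h)\Vert\nabla v\Vert_{\tilde E}$, then insert $\varphi(h)\lesssim \frac{1}{h^2}k\tau_{K,0}$ from \eqref{eq:hyp-lem-intest} and use $\Vert\nabla v\Vert_{\tilde E}\le k^{-1/2}\Vert v\Vert_{G,\tilde E}$ (together with the fact that $\tilde K$ and $\tilde E$ lie in a common fixed-size patch, so both $G$-norms are absorbed into $\Vert v\Vert_{G,\tilde E}$ up to mesh-independent constants).

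A short computation then shows the cross term is $\lesssim \frac{1}{h^2}k^{1/2}\tau_{K,0}^{3/2}\Vert v\Vert_{G,\tilde E}^2$, which is in turn $\lesssim \frac1h\tau_{K,0}\Vert v\Vert_{G,\tilde E}^2$ precisely because $\frac1h k^{1/2}\tau_{K,0}^{1/2}\lesssim 1$, a consequence of $\tau_{K,0}^{1/2}\sim\alpha\le h k^{-1/2}$ recorded in \eqref{eq:alfa-tau}. Combining the two contributions and recalling $\tau_{E,0}=c_4 h^{-1}\tau_{K,0}$ delivers \eqref{eq:ie2}. The only genuine obstacle is verifying that the decay rate $\varphi(h)$ of the \emph{gradient} of the projection error is fast enough relative to $\tau_{K,0}$; this is exactly what the small-$h$ assumption \eqref{eq:hyp-lem-intest} guarantees, and without it the boundary subscales could not be absorbed into the $G$-norm. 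An equivalent route that I might prefer for cleanliness is to apply the Young form \eqref{eq:tracein1} of the trace inequality and dispose of the term $h\Vert\nabla e\Vert_K^2$ directly via the same $\varphi(h)$-estimate, which leads to the identical closing condition.
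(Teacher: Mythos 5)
Your proof is correct and follows essentially the same route as the paper: \eqref{eq:ie1} via the $O(h)$ approximation property and $L^2$-stability of $P_h$ combined through $\tau_{K,0}^{-1}\sim k/h^2+s$ (the paper phrases the same combination with the $\min$ of \eqref{eq:minab}), \eqref{eq:ie3} via $H^1$/$L^2$-stability, and \eqref{eq:ie2} via the trace inequality with the gradient of the projection error absorbed using \eqref{eq:hyp-lem-intest} and $k\tau_{K,0}/h^2\lesssim 1$ --- your cross-term variant of \eqref{eq:tracein} and the paper's Young form \eqref{eq:tracein1} are interchangeable, as you yourself note at the end. The only cosmetic discrepancy is bookkeeping: routing the $\frac{1}{h}\Vert v-P_hv\Vert_K^2$ term through \eqref{eq:ie1} puts the slightly larger patches $\tilde{K}$ on the right-hand side, and a larger-patch norm is not literally ``absorbed into'' $\Vert v\Vert_{G,\tilde{E}}$ --- it is simply harmless, since the fixed-size overlapping patches cause no loss when the bounds are summed globally (the paper sidesteps this by applying the approximation estimate directly on $\tilde{E}$).
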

\begin{proof}
Bound \eqref{eq:ie1} follows from standard interpolation estimates, noting that $P_h$ enjoys the same approximation properties as the best interpolant.
This fact yields:
\begin{align}
\Vert v - P_h v \Vert_K & \lesssim \min \left\{ \frac{h}{k^{1/2}} k^{1/2}\Vert \nabla v \Vert_{\tilde{K}} , \frac{1}{s^{1/2}} s^{1/2} \Vert v \Vert_{\tilde{K}}\right\}\nonumber\\
&  \lesssim \min  \left\{  \frac{h}{k^{1/2}} ,  \frac{1}{s^{1/2}} \right\}  \Vert v \Vert_{G,\tilde{K}} \nonumber\\
&  \lesssim \tau_{K,0}^{1/2} \Vert v \Vert_{G,\tilde{K}} \el
\end{align}
Using \eqref{eq:tracein1}, for  \eqref{eq:ie2} we have that
\begin{align}
\Vert v - P_h v \Vert^2_E & \lesssim {h} \Vert \nabla (v - P_h v ) \Vert^2_{\tilde{E}} +  \frac{1}{h} \Vert v - P_h v  \Vert^2_{\tilde{E}} \label{eq:lem6-1}
\end{align}
For the second term, we have that
\begin{align}
\frac{1}{h} \Vert v - P_h v  \Vert^2_{\tilde{E}} 
\lesssim \frac{1}{h}  \min \left\{ \frac{h^2}{ k} , \frac{1}{s} \right\} \Vert v \Vert^2_{G,\tilde{E}}
 \lesssim \frac{1}{h}\tau_K \Vert v \Vert^2_{G,\tilde{E}}
 \lesssim \tau_E \Vert v \Vert^2_{G,\tilde{E}} \label{eq:lem6-2}
\end{align}
whereas for the first term in \eqref{eq:lem6-1} we have, assuming that \eqref{eq:hyp-lem-intest} holds:
\begin{align*}
{h} \Vert \nabla (v - P_h v ) \Vert^2_{\tilde{E}} \lesssim \frac{1}{h} k \tau_{K,0} \frac{1}{k}\Vert v \Vert^2_{G,\tilde{E}} =  \tau_{E,0} \Vert v \Vert^2_{G,\tilde{E}}
\end{align*}
This, together with \eqref{eq:lem6-2}, proves \eqref{eq:ie2}. Finally, bound \eqref{eq:ie3} is a consequence of the $H^1$-stability of $P_h$.
\end{proof}

\begin{lemma}[Introduction of bubble functions] \label{lem:bf}
There exists a bubble function $\psi_K$ with support in $K$ and a bubble function $\psi_E$ with support in $\tilde{E}$ such that, for all piecewise polynomial functions $v_h$, the following holds:
\begin{align}
\langle v_h , \psi_K v_h\rangle_K & \gtrsim \Vert v_h \Vert^2_K \label{eq:lem-bf-1} \\
\Vert \psi_K v_h \Vert_{G,K} & \lesssim \tau_{K,0}^{-1/2} \Vert v_h \Vert_K \label{eq:lem-bf-2} \\
\langle v_h , \psi_E v_h\rangle_E & \gtrsim \Vert v_h \Vert^2_E  \label{eq:lem-bf-3} \\
\Vert \psi_E v_h \Vert_{G,\tilde{E}} & \lesssim  \tau_{E,0}^{-1/2} \Vert v_h \Vert_{E}  \label{eq:lem-bf-4} 
\end{align}
\end{lemma}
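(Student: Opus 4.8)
The plan is to rely on the standard bubble-function machinery (see, e.g., \cite{verfurth2013posteriori}), the only nonroutine ingredient being the identification of the resulting $h$- and parameter-scalings with $\tau_{K,0}$ and $\tau_{E,0}$.

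First I would treat the element bubble. Let $\psi_K$ be the usual interior bubble on $K$, i.e.\ the normalized product of the barycentric coordinates, so that $0\le\psi_K\le1$, $\psi_K$ vanishes on $\partial K$, and $\psi_K$ is bounded below on a fixed interior subregion. Since $v_h|_K$ ranges over a finite-dimensional polynomial space, mapping to the reference element and invoking the equivalence of all norms on that finite-dimensional space yields simultaneously the lower bound $\langle v_h,\psi_K v_h\rangle_K\gtrsim\Vert v_h\Vert^2_K$, which is \eqref{eq:lem-bf-1}, the $L^2$ bound $\Vert\psi_K v_h\Vert_K\lesssim\Vert v_h\Vert_K$, and the inverse-type estimate $\Vert\nabla(\psi_K v_h)\Vert_K\lesssim h^{-1}\Vert v_h\Vert_K$, all constants being dimensionless and independent of $h$ and the data. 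Inserting the last two bounds into the definition of the $G$-norm gives $\Vert\psi_K v_h\Vert^2_{G,K}=k\Vert\nabla(\psi_K v_h)\Vert^2_K+s\Vert\psi_K v_h\Vert^2_K\lesssim(k h^{-2}+s)\Vert v_h\Vert^2_K$, and since $\tau_{K,0}^{-1}=c_1 k h^{-2}+c_3 s\sim k h^{-2}+s$ by \eqref{eq:alpha2}, this is exactly \eqref{eq:lem-bf-2}.

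For the edge bubble I would proceed analogously, using the face bubble $\psi_E$ supported in $\tilde E$ (the product of the barycentric coordinates associated with the vertices of $E$, extended to the two elements sharing $E$). To give a meaning to $\psi_E v_h$ on all of $\tilde E$, I would first extend the polynomial $v_h|_E$ into $\tilde E$, for instance constantly along the direction transversal to $E$, keeping the same symbol for the extension. The reference-element and norm-equivalence argument then produces $\langle v_h,\psi_E v_h\rangle_E\gtrsim\Vert v_h\Vert^2_E$, which is \eqref{eq:lem-bf-3}, while the mismatch between the $(d-1)$-dimensional measure of $E$ and the $d$-dimensional measure of $\tilde E$ introduces the half-powers of $h$: $\Vert\psi_E v_h\Vert_{\tilde E}\lesssim h^{1/2}\Vert v_h\Vert_E$ and $\Vert\nabla(\psi_E v_h)\Vert_{\tilde E}\lesssim h^{-1/2}\Vert v_h\Vert_E$. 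Combining these in the $G$-norm gives $\Vert\psi_E v_h\Vert^2_{G,\tilde E}\lesssim(k h^{-1}+s h)\Vert v_h\Vert^2_E$, and since $\tau_{E,0}^{-1}=c_4^{-1}h\,\tau_{K,0}^{-1}\sim k h^{-1}+s h$ by \eqref{eq:alpha2}, this yields \eqref{eq:lem-bf-4}.

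The routine part is the scaling to the reference configuration; the step requiring care is the edge estimate, where one must fix an extension of $v_h|_E$ for which the two half-power bounds hold uniformly in $h$, and then correctly match the anisotropic $G$-norm, whose $k$- and $s$-contributions carry different powers of $h$, to the definition of $\tau_{E,0}$. The crux is precisely the relation $\tau_{E,0}=c_4 h^{-1}\tau_{K,0}$, which converts the element scaling $k h^{-2}+s$ into the edge scaling $k h^{-1}+s h$.
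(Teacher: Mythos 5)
Your proposal is correct and takes essentially the same route as the paper: the basic bubble-function bounds \eqref{eq:lem-bf-1}, \eqref{eq:lem-bf-3} and the $L^2$-boundedness of $\psi_K v_h$, $\psi_E v_h$, which you re-derive via the standard reference-element/norm-equivalence argument, are simply imported in the paper from Lemma~3.6 of \cite{verfurth2005robust}. The remaining steps coincide with the paper's: \eqref{eq:lem-bf-2} follows from the inverse estimate \eqref{eq:invest} together with $\tau_{K,0}^{-1}\sim k h^{-2}+s$, and \eqref{eq:lem-bf-4} from the same inverse estimate plus the edge-to-element scaling \eqref{eq:maxE} (your pair of $h^{\pm 1/2}$ bounds) and the identity $\tau_{E,0}^{-1}\sim h\,\tau_{K,0}^{-1}$.
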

\begin{proof}
In Lemma 3.6 of \cite{verfurth2005robust} it is shown, in essence, that functions $\psi_K v_h$ and $\psi_E v_h$ have the same boundedness properties as $v_h$ within each element and edge, respectively, but with local support. Note that they are polynomials (of degree higher than $v_h$), and, therefore, inverse estimates can be applied. Bounds \eqref{eq:lem-bf-1} and \eqref{eq:lem-bf-3} can be directly found in Lemma 3.6 of \cite{verfurth2005robust}. The proof of \eqref{eq:lem-bf-2} follows using the inverse estimate \eqref{eq:invest}:
\begin{align}
\Vert \psi_K v_h \Vert^2_{G,K} & \lesssim \left( \frac{k}{h^2} + s\right) \Vert \psi_K v_h \Vert^2_{K}
\lesssim \tau_{K,0}^{-1} \Vert v_h \Vert^2_K 
\end{align}
To prove  \eqref{eq:lem-bf-4}, we proceed similarly, using now \eqref{eq:maxE}:
\begin{align}
\Vert \psi_E v_h \Vert^2_{G,\tilde{E}} &\lesssim 
\tau_{K,0}^{-1} \Vert v_h \Vert^2_{\tilde{E}} \lesssim 
{h}\tau_{K,0}^{-1} \Vert v_h \Vert^2_{E} \lesssim 
\tau_{E,0}^{-1} \Vert v_h \Vert^2_{E} 
\end{align}
from where the Lemma follows.
\end{proof}

Let us define
\begin{align}
\eta^2_{K,0} & = \tau_{K,0} \Vert  P_h^\bot (R_K) \Vert^2_K + \sum_{E\subset \partial K} \tau_{E,0} \Vert R_E \Vert^2_E\nonumber\\
\eta_0 & = \left( \sum_K \eta^2_{K,0} \right)^{1/2}\el
\end{align}

\begin{thm}[A posteriori error estimates]\label{thm:ap-ver}
Under the assumptions of Lemma~\ref{lem:lem-intest}, for both the Galerkin and the OSGS methods, it holds that
\begin{align}
\eta_0 \lesssim \Vert e \Vert_S \lesssim  \eta_0 \label{eq:apost}
\end{align}
\end{thm}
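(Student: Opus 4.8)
The plan is to lean on the equivalence already recorded in Lemma~\ref{lem:error-residual}, namely $\Vert R_{u_h}\Vert_{G^\ast}\sim\Vert e\Vert_S$, which reduces the theorem to the purely computable two-sided estimate $\eta_0\sim\Vert R_{u_h}\Vert_{G^\ast}$ between the estimator and the dual norm of the residual. Since $R_{u_h}(v)=L(v)-B(u_h,v)$ is built from the Galerkin form $B$, this reduction is valid for \emph{any} $u_h\in V_h$; the particular method (Galerkin or OSGS) will enter only through the value of $R_{u_h}$ on the finite element space $V_h$. The reliability half $\Vert e\Vert_S\lesssim\eta_0$ then corresponds to $\Vert R_{u_h}\Vert_{G^\ast}\lesssim\eta_0$, and the efficiency half $\eta_0\lesssim\Vert e\Vert_S$ to $\eta_0\lesssim\Vert R_{u_h}\Vert_{G^\ast}$.

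For reliability I would take an arbitrary $w\in V$ and split $w=P_hw+P_h^\bot w$. The orthogonal part is treated with the representation \eqref{eq:res2} of Lemma~\ref{lem3}: element- and edge-wise Cauchy--Schwarz, followed by the interpolation estimates \eqref{eq:ie1}--\eqref{eq:ie2} of Lemma~\ref{lem:lem-intest}, turns $\langle P_h^\bot(R_K),P_h^\bot w\rangle_K$ and $\langle R_E,P_h^\bot w\rangle_E$ into $\tau_{K,0}^{1/2}\Vert P_h^\bot(R_K)\Vert_K\,\Vert w\Vert_{G,\tilde K}$ and $\tau_{E,0}^{1/2}\Vert R_E\Vert_E\,\Vert w\Vert_{G,\tilde E}$; a discrete Cauchy--Schwarz together with the finite overlap of the patches $\tilde K$, $\tilde E$ yields $R_{u_h}(P_h^\bot w)\lesssim\eta_0\Vert w\Vert_G$. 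It then remains to handle $R_{u_h}(P_hw)$. For the Galerkin method this term vanishes by Galerkin orthogonality and the bound is complete. For the OSGS method one has instead $R_{u_h}(P_hw)=S(u_h,P_hw)$ by \eqref{eq:osgs}, so one must absorb the consistency contribution, whose interior part is $\sum_K\tau_K\langle{\cal L}^* P_hw,P_h^\bot(R_K)\rangle_K$ by \eqref{eq:SGSsf}, plus the edge part, into $\eta_0\Vert w\Vert_G$.

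For efficiency I would use the bubble-function technology of Lemma~\ref{lem:bf}, which needs no orthogonality and therefore applies identically to both methods. Testing $R_{u_h}$ with the element bubble $\psi_KR_K$ and invoking \eqref{eq:lem-bf-1} and \eqref{eq:lem-bf-2} gives the local lower bound $\tau_{K,0}^{1/2}\Vert R_K\Vert_K\lesssim\Vert R_{u_h}\Vert_{G^\ast}$, from which $\tau_{K,0}^{1/2}\Vert P_h^\bot(R_K)\Vert_K\lesssim\tau_{K,0}^{1/2}\Vert R_K\Vert_K$ follows at once since $P_h^\bot$ contracts the $L^2$ norm. Testing next with the edge bubble $\psi_ER_E$ and using \eqref{eq:lem-bf-3}--\eqref{eq:lem-bf-4} isolates the edge term, while the interior contributions over $\tilde E$ are reabsorbed by the interior estimate just obtained, the balance being guaranteed by $\tau_{E,0}^{1/2}h^{1/2}\sim\tau_{K,0}^{1/2}$; summing over $K$ and $E$ with the finite overlap of the bubble supports then produces $\eta_0\lesssim\Vert R_{u_h}\Vert_{G^\ast}$.

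The step I expect to be genuinely delicate is the OSGS consistency term $S(u_h,P_hw)$ in the reliability argument. Its diffusive and reactive parts can be matched against $\tau_{K,0}^{1/2}\Vert P_h^\bot(R_K)\Vert_K$ using inverse estimates on the finite element function $P_hw$ and the $H^1$-stability of $P_h$ (as in \eqref{eq:ie3}), but the convective part is the awkward one: after a Cauchy--Schwarz splitting of $\tau_K$ it is naturally controlled only by $\tau_K^{1/2}\Vert a\cdot\nabla P_hw\Vert$, i.e.\ by the convective contribution to $\vertiii{P_hw}$, which is \emph{not} dominated by the test norm $\Vert w\Vert_G$ in the convection-dominated regime. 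This mismatch is the concrete manifestation of the fact, noted earlier, that $B_{\rm stab}$ is not bounded in the whole of $V$ with respect to the $G$-type norm, and it is why this Verf\"urth-style route, though it delivers the clean estimate for the Galerkin method, is only partially satisfactory for the stabilized formulation and has to be complemented by the second strategy.
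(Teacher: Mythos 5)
Your architecture is the paper's: reduce via Lemma~\ref{lem:error-residual} to the two-sided bound $\eta_0\sim\Vert R_{u_h}\Vert_{G^\ast}$, bound $R_{u_h}(v-P_hv)$ through the representation \eqref{eq:res2} and the interpolation estimates of Lemma~\ref{lem:lem-intest}, and prove efficiency with the bubble functions of Lemma~\ref{lem:bf}. Your efficiency argument is essentially correct and equivalent to the paper's: where you test locally with $\psi_KR_K$ and $\psi_ER_E$ and then sum using finite overlap, the paper performs the same gluing explicitly with the single test function $w_h=\sum_K\tau_{K,0}\psi_KR_K+\gamma\sum_E\tau_{E,0}\psi_ER_E$, absorbing the cross terms $\langle R_K,\psi_ER_E\rangle_K$ by \eqref{eq:maxE} and Young's inequality with $\gamma$ small; your balance relation $\tau_{E,0}\sim\tau_{K,0}/h$ is exactly the mechanism used there.

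The genuine gap is in the reliability half for the OSGS method. You correctly identify the consistency term $R_{u_h}(P_hv)=S(u_h,P_hv)$, but you then assert that its convective part is ``naturally controlled only by $\tau_K^{1/2}\Vert a\cdot\nabla P_hv\Vert$'' and hence cannot be dominated by $\Vert v\Vert_G$, concluding that this route delivers the clean estimate only for Galerkin. The theorem asserts \eqref{eq:apost} for \emph{both} methods, and the paper does close this step. Your dead end comes from the symmetric Cauchy--Schwarz split $\tau_K=\tau_K^{1/2}\cdot\tau_K^{1/2}$. The paper instead keeps the full weight $\tau_K$ on the residual factor $\Vert P_h^\bot(R_K)\Vert_K$ (which is exactly what enters $\eta_0$) and bounds the test factor crudely: since $P_hv$ is an FE function, the inverse estimate applies, and
$\Vert a\cdot\nabla P_hv\Vert_K\leq\vert a\vert\,\Vert\nabla P_hv\Vert_K\lesssim\vert a\vert\,h^{-1}\min\{hk^{-1/2},s^{-1/2}\}\Vert P_hv\Vert_{G,\tilde{K}}\sim\vert a\vert\,h^{-1}\tau_{K,0}^{1/2}\Vert P_hv\Vert_{G,\tilde{K}}$
by \eqref{eq:alfa-tau}. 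The definition \eqref{eq:tauK} gives $\tau_K\vert a\vert h^{-1}\leq 1/c_2$, so the total interior coefficient is $\tau_K^{1/2}+\tau_K\vert a\vert h^{-1}\tau_{K,0}^{1/2}\lesssim\tau_{K,0}^{1/2}$; together with the $H^1$-stability \eqref{eq:ie3} this yields $S(u_h,P_hv)\lesssim\eta_0\Vert v\Vert_G$ (the edge term works the same way, via \eqref{eq:tracein2} and $\tau_E kh^{-1}\lesssim 1$). In other words, the convective weight $c_2\vert a\vert/h$ built into $\tau_K$ is precisely what neutralizes the factor $\vert a\vert$; your diagnosis that this step is the manifestation of $B_{\rm stab}$ failing to be bounded on $V$ is misplaced, because here \emph{both} arguments of $S$ are FE functions, so inverse estimates are available --- the genuine obstruction you describe (Assumption~\ref{hyp-1}) belongs to the John--Novo analysis of Section~\ref{sec:john-novo}, where the second argument is a general element of $V$, not to this theorem.
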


\begin{proof}
{\bf Upper bound}:  We have that
\begin{align}
R_{u_h}(v) & = R_{u_h}(v - P_h v) +  R_{u_h}(P_h v) \el
\end{align}
Let us bound each term separately. Using Lemma~\ref{lem:lem-intest}:
\begin{align}
R_{u_h}(v - P_h v) & = \sum_K \langle P_h^\bot (R_K ), v - P_h v\rangle_K + \sum_E \langle R_E , v - P_h v\rangle_E\nonumber\\
& \lesssim \sum_K \Vert P_h^\bot ( R_K) \Vert_K \, \tau_{K,0}^{1/2} \, \Vert v\Vert_{G,\tilde{K}} +  \sum_E \Vert R_E \Vert_E  \, \tau_{E,0}^{1/2} \, \Vert v\Vert_{G,\tilde{E}}\nonumber\\ 
& \lesssim  \Vert v\Vert_{G} \left[   \sum_K  \tau_{K,0}\,\Vert P_h^\bot (R_K) \Vert^2_K + \sum_E \tau_{E,0} \, \Vert R_E \Vert_E^2 \right]^{1/2}\nonumber\\
& \lesssim  \Vert v\Vert_{G}  \eta_0\el
\end{align}
For the Galerkin method, $R_{u_h}(P_h v) = 0$. For the OSGS method, using the expressions of $\tau_K$ and $\tau_E$, we have that
\begin{align*}
R_{u_h}(P_h v) & = L(P_h v)  - B(u_h , P_h v ) \nonumber\\
& = \underbrace{L(P_h v) -  B_{\rm stab} (u_h , P_h v)}_{=0} + S(u_h, P_h v)\nonumber\\
& \lesssim  \sum_K \tau_K \Vert P_h^\bot (R_K) \Vert_K \, \left( \Vert k \Delta P_h v \Vert_K + \Vert a\cdot \nabla (P_h v)\Vert_K \right) \nonumber\\
&  \quad + \sum_E \tau_E k^2 \Vert \jump{\partial_n u_h} \Vert_E \, \Vert \jump{\partial_n P_h v} \Vert_E \nonumber\\
& \lesssim  \sum_K \tau_K \Vert P_h^\bot (R_K) \Vert_K  \left[ \frac{k^{1/2}}{h}k^{1/2} \Vert \nabla (P_h v)\Vert_K
 + \vert a \vert \min\left\lbrace \frac{1}{k^{1/2}} , \frac{1}{s^{1/2} h}\right\rbrace \Vert P_h v\Vert_{G,\tilde{K}}   \right]  \nonumber\\
&  \quad + \sum_E \tau_E \Vert R_E \Vert_E  \frac{1}{h^{1/2}} k \Vert \nabla  (P_h v)\Vert_{\tilde{E}} \nonumber\\
& \lesssim  \sum_K \left( \tau^{1/2}_K  +  \tau_K  \vert a \vert  \frac{1}{h}\tau_{K,0}^{1/2}\right) \Vert P_h^\bot (R_K) \Vert_K \, \Vert P_h v \Vert_{G,\tilde{K}} \nonumber\\
&  \quad + \sum_E \tau_E^{1/2} \Vert R_E \Vert_E  k^{1/2} \Vert \nabla  (P_h v)\Vert_{\tilde{E}} \nonumber\\
& \lesssim \Vert v\Vert_{G}  \eta_0\el
\end{align*}
Therefore, using Lemma~\ref{lem:error-residual}:
$$ \Vert e \Vert_S \lesssim \Vert R_{u_h}  \Vert_{G^\ast } \lesssim \eta_0$$

{\bf Lower bound}. Let us pick the particular function 
$$ w_h = \sum_K \tau_{K,0} \psi_K R_K + \gamma \sum_E \tau_{E,0} \psi_E R_E $$
with $\gamma$ to be determined. Since the support of $\psi_K R_K$ is $K$, and the support of $\psi_E R_E$ is $\tilde{E}$, with only two elements, the support of their products extends to a fixed number of elements, not depending on $h$. Therefore, using \eqref{eq:lem-bf-2} and \eqref{eq:lem-bf-4} of Lemma~\ref{lem:bf}:
\begin{align}
\Vert w_h \Vert^2_G & \lesssim  \sum_K  \tau_{K,0}^2 \Vert \psi_K R_K\Vert_{G,K}^2 
+  \gamma^2 \sum_E \tau_{E,0}^2 \Vert \psi_E R_E \Vert^2_{G,\tilde{E}} \nonumber\\
& \lesssim  \sum_K  \tau_{K,0} \Vert R_K\Vert_{K}^2 +  \gamma^2 \sum_E \tau_{E,0} \Vert R_E \Vert^2_{E}\nonumber\\
& \lesssim \eta_0^2 \el 
\end{align}
On the other hand, noting that $\psi_K R_K$ is zero on all edges, using \eqref{eq:lem-bf-1} and \eqref{eq:lem-bf-3} of Lemma~\ref{lem:bf}  and \eqref{eq:maxE} we have that
\begin{align}
R_{u_h} (w_h) & = \sum_K \langle R_K , w_h\rangle_K + \sum_E \langle R_E , w_h\rangle_E \nonumber\\
& =  \sum_K \left[  \tau_{K,0} \langle R_K , \psi_K R_K\rangle_K + \gamma \sum_{E\subset \partial K}  \tau_{E,0} \langle R_K ,  \psi_E R_E\rangle_K \right] 
+  \sum_E  \gamma \tau_{E,0} \langle R_E ,  \psi_E R_E\rangle_E \nonumber\\
& \gtrsim  \sum_K  \tau_{K,0} \Vert R_K\Vert_K^2 +  \gamma  \sum_E \tau_{E,0} \Vert R_E\Vert_E^2 
-  \gamma \sum_K  \sum_{E\subset \partial K} \tau_{E,0} \Vert R_K \Vert_K\, \Vert \psi_E R_E\Vert_K\nonumber\\
& \gtrsim   \sum_K  \tau_{K,0} \Vert R_K\Vert_K^2 +  \gamma  \sum_E \tau_{E,0} \Vert R_E\Vert_E^2 
-  \gamma \sum_K  \sum_{E\subset \partial K} \tau_{E,0} {h^{1/2}} \Vert R_K \Vert_K\, \Vert R_E\Vert_E \nonumber\\
& \gtrsim   \sum_K \tau_{K,0}\Vert R_K\Vert_K^2 +  \gamma  \sum_E \tau_{E,0} \Vert R_E\Vert_E^2 
-  \gamma \left[\sum_K \beta  \tau_{K,0} \Vert R_K\Vert_K^2 + \frac{1}{\beta}\sum_E \tau_{E,0}\Vert R_E\Vert^2_E\right]\el
\end{align}
where $\beta$ arises from Young's inequality. Taking $\beta$ sufficiently large and $\gamma$ sufficiently small, it follows that
\begin{align}
R_{u_h} (w_h) \gtrsim \eta_0^2 \el
\end{align}
From the two results obtained and Lemma 5, we have that
$$ \Vert e \Vert_S \gtrsim \Vert R_{u_h}  \Vert_{G^\ast } = \sup_{w\in V, w\not = 0} \frac{R_{u_h} (w)}{\Vert w \Vert_G}
 \geq \frac{R_{u_h} (w_h)}{\Vert w_h \Vert_G} \gtrsim \eta_0$$
which proves the theorem.
\end{proof}

\subsection{The proposed a posteriori error estimate}

Estimate $\Vert e \Vert_S\sim \eta_0$ means that
\begin{align}
& k\Vert \nabla e \Vert^2 + s \Vert e \Vert^2 + \sup_{w\in V, w\not = 0} \frac{(a\cdot\nabla e ,  w)^2}{ k\Vert \nabla w \Vert^2 + s \Vert w \Vert^2 }\nonumber\\
& \quad \sim \sum_K \frac{1}{\frac{1}{h^2}k +  s} \Vert P_h^\bot (-k \Delta u_h + a\cdot\nabla u_h + s u_h - f) \Vert_K^2 +
\sum_E  \frac{1}{\frac{1}{h}k  + {h}s} \Vert k\jump{\partial_n u_h}\Vert_E^2\label{eq:est-obtained}
\end{align}
For $s=0$, $k\to 0$, this estimates the $H^{-1}$-norm of the error of the convective term. This is a rather weak result, as we would wish to obtain an estimate in terms of the $L^2$-norm of the convective term, perhaps with mesh-dependent coefficients. This would be achieved if we could prove that
\begin{align}
& k\Vert \nabla e \Vert^2 + s \Vert e \Vert^2 +   \frac{1}{\frac{1}{h^2}k + \frac{1}{h}\vert a \vert +  s}\Vert a\cdot\nabla e \Vert^2 \nonumber\\
& \quad \sim \sum_K \frac{1}{\frac{1}{h^2}k + \frac{1}{h}\vert a \vert +  s} \Vert P_h^\bot (-k \Delta u_h + a\cdot\nabla u_h + s u_h - f) \Vert_K^2\nonumber\\
& \quad\quad  + \sum_E  \frac{1}{\frac{1}{h}k  + \vert a \vert + {h}s} \Vert k\jump{\partial_n u_h}\Vert_E^2\label{eq:est-wished}
\end{align}
For $s=0$, $k\to 0$, \eqref{eq:est-obtained} and \eqref{eq:est-wished} yield:
\begin{align}
\Vert a\cdot\nabla e \Vert_{H^{-1}} \sim {h} \Vert P_h^\bot  (a\cdot\nabla u_h  - f) \Vert \qquad \hbox{from \eqref{eq:est-obtained} }\nonumber\\
\Vert a\cdot\nabla e \Vert \sim \Vert P_h^\bot  (a\cdot\nabla u_h  - f) \Vert \qquad \hbox{from \eqref{eq:est-wished} }\el
\end{align}

In terms of the parameters $\tau_K$ and $\tau_E$, estimate \eqref{eq:est-wished} can be written as 
\begin{align*}
& k\Vert \nabla e \Vert^2 + s \Vert e \Vert^2 +   \tau_K \Vert a\cdot\nabla e \Vert^2 \\
& \quad \sim \sum_K \tau_K \Vert P_h^\bot (-k \Delta u_h + a\cdot\nabla u_h + s u_h - f) \Vert_K^2 +
\sum_E  \tau_E \Vert k\jump{\partial_n u_h}\Vert_E^2
\end{align*}
In view of the definition of the SGSs in Eqs.~\eqref{eq:SGSs1}--\eqref{eq:SGSs2} and the stabilized norm in Eq.~\eqref{eq:stab-norm}, the result we wished to prove is:
\begin{align}
\vertiii{e}^2 \sim \eta^2 := \sum_K \tau_K^{-1} \Vert u'_K\Vert_K^2 +  \sum_E \tau_E^{-1} \Vert u'_E\Vert_E^2 \label{eq:ap-prop}
\end{align}
That is to say, the {\em stabilized} norm of the error behaves as a {\em scaled} norm of the SGSs. With respect to the result proved in Theorem~\ref{thm:ap-ver}, the changes are that the norm $\Vert \cdot \Vert_S$ is replaced by $\vertiii{\cdot}$ and the parameters $\tau_{K,0}$ and $\tau_{E,0}$ by $\tau_K$ and $\tau_E$, respectively (and therefore $\eta_0$ by $\eta$).

{\em Our proposal is estimate \eqref{eq:ap-prop}.} We will see in the numerical examples that it behaves very well, but we cannot prove it analytically unless some continuity conditions on the convective term are assumed, which, in general, cannot be shown to hold. These conditions were implicitly assumed in \cite{codina2021posteriori} for the linearized Navier-Stokes equations, and explicitly stated in \cite{john-novo-2013}, where a result similar to \eqref{eq:ap-prop} was proposed for the SUPG method. 

\subsection{John-Novo's approach}\label{sec:john-novo}

In the following, we adapt the analysis presented in~\cite{john-novo-2013} for the SUPG method to our numerical formulation, that is to say, using the OSGS method and including the SGSs on the element boundaries. As we shall see, this analysis is not fully satisfactory for two main reasons: 
\begin{itemize}
\item It is based on an assumption that cannot be shown to hold in practice, as discussed later.
\item The upper and lower bounds are not only proportional to $\eta$ in \eqref{eq:ap-prop}, but also include terms that depend on the interpolation error of the exact solution $u$; these, however, can be expected to be small. 
\end{itemize}

\begin{hyp} \label{hyp-1}
For any $v\in V$ there holds:
\begin{align}
\tau^{-1}_K  \Vert P_h^\bot(v)\Vert^2_{{\cal T}_h}  + \tau^{-1}_E  \Vert P_h^\bot(v)\Vert^2_{{\cal E}_h} 
\lesssim  \vertiii{v}^2 \label{eq:hyp-cont}
\end{align}
\end{hyp}

Observe that \eqref{eq:ie1}-\eqref{eq:ie2} in Lemma~\ref{lem:lem-intest} imply
\begin{align}
\tau^{-1}_{K,0}  \Vert P_h^\bot(v)\Vert^2_{{\cal T}_h}  + \tau^{-1}_{E,0}  \Vert P_h^\bot(v)\Vert^2_{{\cal E}_h} 
\lesssim  \Vert {v}\Vert ^2_G\el
\end{align}
Thus, \eqref{eq:hyp-cont} can be understood as a generalization to the case we wish to analyze. However, we are not able to prove it, and this is why it is accepted as an assumption, as in \cite{john-novo-2013}. The validity of this assumption is discussed below.


\begin{lemma}[Quasi continuity of $B$ and $S$]\label{lem:for-thm40}
Suppose that Assumption~\ref{hyp-1} and the assumptions of Theorem~\ref{thm1} hold. Let $e=u-u_h$ be the error of the OSGS formulation. Then, for any $v\in V$ the following holds:
\begin{align}
& B(e, P_h^\bot(v))\ \lesssim \eta \vertiii{v}  \label{eq:semi-cont10}\\
& B(e, P_h^\bot(v))\ \lesssim \vertiii{e} \,   \vertiii{v}   \label{eq:semi-cont20}\\
& S(e,v) \lesssim \eta (\vertiii{v} + \zeta(i_v))\label{eq:semi-cont40} \\
& S(e,v) \lesssim (\vertiii{e} + \zeta(i_u)) (\vertiii{v} + \zeta(i_v))\label{eq:semi-cont30}
\end{align}
\end{lemma}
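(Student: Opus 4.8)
The plan is to treat all four bounds with one common engine. Since $u$ solves the continuous problem, $B(e,w)=R_{u_h}(w)$ for every $w\in V$; since the mesh is quasi-uniform with constant data, $\tau_K$ and $\tau_E$ are single constants that can be pulled freely in and out of the element/edge sums. At the outset I would record the two facts that drive everything: through the definitions of $u'_K,u'_E$ the estimator rewrites as $\eta^2=\sum_K\tau_K\Vert P_h^\bot(R_K)\Vert_K^2+\sum_E\tau_E\Vert R_E\Vert_E^2$, and $i_e:=P_h^\bot e=P_h^\bot u=i_u$ because $P_h^\bot u_h=0$ (write likewise $i_v:=P_h^\bot v$).

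For \eqref{eq:semi-cont10} I would write $B(e,P_h^\bot v)=R_{u_h}(v-P_h v)$ and use the representation \eqref{eq:res2} of Lemma~\ref{lem3}. Pairing each summand as $[\tau_K^{1/2}\Vert P_h^\bot(R_K)\Vert_K][\tau_K^{-1/2}\Vert P_h^\bot v\Vert_K]$ on interiors and analogously on edges, a discrete Cauchy--Schwarz factors the sum into $\eta$ times $(\tau_K^{-1}\Vert P_h^\bot v\Vert_{{\cal T}_h}^2+\tau_E^{-1}\Vert P_h^\bot v\Vert_{{\cal E}_h}^2)^{1/2}$, and the latter is $\lesssim\vertiii{v}$ by Assumption~\ref{hyp-1}. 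For \eqref{eq:semi-cont20} I would instead estimate $B(e,P_h^\bot v)$ term by term. The diffusive and reactive parts follow from the $H^1$- and $L^2$-stability of $P_h$, giving $k^{1/2}\Vert\nabla P_h^\bot v\Vert\lesssim\vertiii{v}$ and $s^{1/2}\Vert P_h^\bot v\Vert\lesssim\vertiii{v}$. The convective part is the delicate one: by $L^2$-orthogonality, $(a\cdot\nabla e,P_h^\bot v)=(P_h^\bot(a\cdot\nabla e),P_h^\bot v)$, and Cauchy--Schwarz together with $\tau_K^{1/2}\Vert P_h^\bot(a\cdot\nabla e)\Vert\le\vertiii{e}$ and Assumption~\ref{hyp-1} (to absorb $\Vert P_h^\bot v\Vert\lesssim\tau_K^{1/2}\vertiii{v}$) yields $\tau_K^{-1/2}\vertiii{e}\cdot\tau_K^{1/2}\vertiii{v}=\vertiii{e}\,\vertiii{v}$.

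The two $S$-bounds share one skeleton: split $S(e,v)=S_{\rm int}+S_{\rm edge}$ and apply Cauchy--Schwarz so that $S(e,v)\le(A_I^2+A_E^2)^{1/2}(B_I^2+B_E^2)^{1/2}$, the $A$'s collecting $e$-factors and the $B$'s the $v$-factors. The $v$-factors are identical in both: using $\tau_K^{1/2}\Vert a\cdot\nabla v\Vert\le\vertiii{v}$, $\tau_K s\lesssim1$, and the splittings $\Delta v=\Delta P_h v+\Delta i_v$ and $\jump{\partial_n v}=\jump{\partial_n P_h v}+\jump{\partial_n i_v}$, one controls the polynomial parts $\Delta P_h v,\jump{\partial_n P_h v}$ by the inverse and trace estimates \eqref{eq:invest}, \eqref{eq:tracein2} combined with $\tau_K\le h^2/(c_1k)$ and $\tau_E=c_4h^{-1}\tau_K$, producing $k^{1/2}\Vert\nabla v\Vert\lesssim\vertiii{v}$, while the remainders $\tau_K^{1/2}k\vert i_v\vert_{H^2(K)}$ and $\tau_E^{1/2}k\Vert\jump{\partial_n i_v}\Vert_E$ are exactly the two pieces of $\zeta(i_v)$; hence $B_I,B_E\lesssim\vertiii{v}+\zeta(i_v)$. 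For the $e$-factors of \eqref{eq:semi-cont40} I would exploit $\mathcal{L}e=R_K$ in each interior and $k\jump{\partial_n e}=-R_E$ on each edge (the exact solution being globally $H^2$, so $\jump{\partial_n u}=0$), which makes $A_I,A_E$ the two halves of $\eta$. For \eqref{eq:semi-cont30} I would not reduce to the residuals but split $P_h^\bot(\mathcal{L}e)$ exactly as on the $v$-side (polynomial part by inverse estimate, Laplacian remainder into $\zeta$), and bound $\Vert k\jump{\partial_n e}\Vert_E$ directly via Lemma~\ref{du-on-dK} applied to $e$ (recalling $i_e=i_u$), giving $k^{1/2}\Vert\nabla e\Vert+\tau_K^{1/2}k\vert i_u\vert_{H^2(K)}$, so $A_I,A_E\lesssim\vertiii{e}+\zeta(i_u)$.

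The main obstacle, and the reason the terms $\zeta(i_u),\zeta(i_v)$ are unavoidable, is the appearance of $k\Vert\Delta(\cdot)\Vert_K$ and $k\Vert\jump{\partial_n(\cdot)}\Vert_E$ for functions in $V$ that are only piecewise $H^2$: no inverse estimate controls the Laplacian of a generic $v\in V$, so one must peel off the finite-element part and pay for the remainder with the $H^2$-seminorm of the interpolation error. The second delicate point is that the convective contributions in \eqref{eq:semi-cont10}, \eqref{eq:semi-cont20} and the whole mechanism rely on Assumption~\ref{hyp-1}, which upgrades the interpolation bounds \eqref{eq:ie1}--\eqref{eq:ie2} from $\Vert\cdot\Vert_G$ to their $\vertiii{\cdot}$-analogue and cannot be proved in general; keeping precise track of each place where a factor $\tau_K^{-1}\Vert P_h^\bot v\Vert^2$ is traded for $\vertiii{v}^2$ is essential.
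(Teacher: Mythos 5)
Your proposal is correct and follows essentially the same route as the paper's proof: the same residual representation (Lemma~\ref{lem3}) combined with Assumption~\ref{hyp-1} for \eqref{eq:semi-cont10}--\eqref{eq:semi-cont20}, and the same splittings $\Delta v = \Delta P_h v + \Delta i_v$ and $\jump{\partial_n v} = \jump{\partial_n P_h v} + \jump{\partial_n i_v}$ (inverse/trace estimates on the polynomial parts, $\zeta$ absorbing the remainders, with $\tau_K \lesssim h^2/k$, $\tau_E \sim h^{-1}\tau_K$, $\tau_K s \lesssim 1$) for the two $S$-bounds. Your identities ${\cal L}e = R_K$ and $k\jump{\partial_n e} = -R_E$ in \eqref{eq:semi-cont40} are exactly the paper's step $S(e,v) = -S(u_h,v)$ via $S(u,v)=0$ written out, and your appeal to Lemma~\ref{du-on-dK} for the edge factor of $e$ in \eqref{eq:semi-cont30} reproduces the same estimate (recall that lemma is itself proved by the trace inequality plus the $H^1$-stability of $P_h$), so the differences are purely presentational.
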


\begin{proof}
Using Lemma~\ref{lem3} and Assumption~\ref{hyp-1}  we have that
\begin{align*}
B(e, P_h^\bot(v)) & = L(P_h^\bot(v)) - B(u_h, P_h^\bot(v)) \nonumber\\
& = \sum_K \langle  P^\bot_h(R_K) , P^\bot_h(v)\rangle_K +  \sum_E \langle  R_E  , P^\bot_h(v)\rangle_E \nonumber\\
& \lesssim \sum_K \tau_K^{1/2} \Vert P^\bot_h(R_K)\Vert_K \tau_K^{-1/2}  \Vert P^\bot_h(v)\Vert_K 
+ \sum_E \tau_E^{1/2} \Vert R_E\Vert_E \tau_E^{-1/2}  \Vert P^\bot_h(v)\Vert_E  \nonumber\\
&  \lesssim \eta \vertiii{v} 
\end{align*}
which is \eqref{eq:semi-cont10}. Inequality \eqref{eq:semi-cont20} follows from the definition of $B$ and using Assumption~\ref{hyp-1} for the convective term:
$$ (a\cdot\nabla e , P_h^\bot(v)) \lesssim \tau_K^{1/2} \Vert a\cdot\nabla e \Vert \tau_K^{-1/2} \Vert P_h^\bot(v)\Vert \lesssim \vertiii{e}\,\vertiii{v}$$
To prove \eqref{eq:semi-cont30}, let us start noting that, for any function $v\in V$:
\begin{align*}
\tau_K^{1/2} k \Vert \Delta v\Vert_K & \leq \tau_K^{1/2} \left( k \vert i_v\vert_{H^2(K)} + k \Vert \Delta P_h v\Vert_K\right) \nonumber\\
& \leq \tau_K^{1/2}  k \vert i_v\vert_{H^2(K)} + \tau_K^{1/2}  k C_{\rm inv} \frac{1}{h} \Vert \nabla P_h v\Vert_K \nonumber\\
& \lesssim \tau_K^{1/2}  k \vert i_v\vert_{H^2(K)} + k^{1/2}\Vert \nabla P_h v\Vert_K \el
\end{align*}
Hence, since $i_e = e - P_h(e) = u - P_h u = i_u$:
\begin{align*}
S(e,v) & \lesssim  \sum_K (  \tau^{1/2}_Kk \vert i_v \vert_{H^2(K)} 
+ k^{1/2}\Vert \nabla v\Vert_K 
+ \tau^{1/2}_K \Vert  a\cdot\nabla v   \Vert_K 
+ s^{1/2} \Vert  v \Vert_K ) \nonumber\\
& \quad \times (  \tau^{1/2}_K k \vert i_u \vert_{H^2(K)} +  k^{1/2}\Vert \nabla e \Vert_K +  \tau^{1/2}_K  \Vert  a\cdot\nabla e   \Vert_K + s^{1/2} \Vert  e \Vert_K ) \nonumber\\
& \quad + \sum_E \tau_E \Vert k \jump{\partial_n i_v }\Vert_E \, \Vert \jump{\partial_n i_u }\Vert_E \nonumber\\
&  \lesssim (\vertiii{e} + \zeta(i_u)) ( \vertiii{v} + \zeta(i_v))
\end{align*}
which is \eqref{eq:semi-cont30}. Finally, \eqref{eq:semi-cont40} can be proved proceeding as before and using the fact that $S(u,v) = 0$ (recall that we consider $f\in V_h$):
\begin{align*}
S(e,v)   & = - S(u_h  , v) \nonumber\\
&  =  \sum_K \tau_K \langle  k \Delta v + a\cdot\nabla v - s v ,  P_h^\bot ( R_K)\rangle_K 
 + \sum_E \tau_E  \left\langle  k\jump{\partial_n v} , R_E\right\rangle_E \nonumber\\
& \lesssim \eta (\vertiii{v} + \zeta(i_v))
\end{align*} 
This completes the proof of the Lemma.
\end{proof}

\begin{thm}[A posteriori error estimate in the stabilized norm]\label{eq:ee-ub}
Under the assumptions of Lemma~\ref{lem:for-thm40}, there holds:
\begin{align}
\eta - \zeta (i_u) \lesssim \vertiii{e}  \lesssim \eta + \zeta  (i_u) 
\end{align}
\end{thm}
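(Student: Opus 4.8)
The plan is to prove the two inequalities separately. For the upper bound $\vertiii{e}\lesssim\eta+\zeta(i_u)$ I would combine the quasi inf-sup stability of Theorem~\ref{thm1} with the quasi continuity estimates of Lemma~\ref{lem:for-thm40}; for the lower bound I would run a local bubble-function argument, parallel to the efficiency part of Theorem~\ref{thm:ap-ver} but measuring the test function against the stabilized norm. The pivotal structural fact used repeatedly is Galerkin orthogonality of the stabilized form: by consistency $B_{\rm stab}(u,v_h)=L(v_h)$, hence $B_{\rm stab}(e,v_h)=0$ for all $v_h\in V_h$. I also note that $i_e=e-P_h e=u-P_h u=i_u$, since $P_h u_h=u_h$.

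For the upper bound I would apply Theorem~\ref{thm1} with $u=e$, producing $\Lambda(e)=e+\gamma v_h^0$ with $v_h^0\in V_h$ and
\[
\vertiii{e}^2-\zeta^2(i_u)\lesssim B_{\rm stab}(e,\Lambda(e)).
\]
Because $v_h^0\in V_h$, Galerkin orthogonality collapses the right-hand side to $B_{\rm stab}(e,e)$, and a second use of orthogonality gives $B_{\rm stab}(e,e)=B_{\rm stab}(e,P_h^\bot e)=B(e,i_u)+S(e,i_u)$ with $i_u=P_h^\bot e$. The two pieces are bounded by the quasi continuity estimates: \eqref{eq:semi-cont10} yields $B(e,i_u)\lesssim\eta\,\vertiii{e}$, and \eqref{eq:semi-cont40} yields $S(e,i_u)\lesssim\eta\,(\vertiii{i_u}+\zeta(i_u))$ (using $i_{i_u}=i_u$). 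Before applying Young's inequality I would reduce $\vertiii{i_u}$: its gradient part is $\lesssim\vertiii{e}$ by the $H^1$-stability of $P_h$, and its reaction part is $\lesssim\vertiii{e}$ by Assumption~\ref{hyp-1} (which gives $\Vert i_u\Vert=\Vert P_h^\bot e\Vert\lesssim\tau_K^{1/2}\vertiii{e}$, whence $s^{1/2}\Vert i_u\Vert\lesssim\vertiii{e}$ since $\tau_K s\lesssim1$). Inserting these and absorbing the factor $\vertiii{e}$ with Young's inequality gives $\vertiii{e}\lesssim\eta+\zeta(i_u)+(\sum_K\tau_K\Vert a\cdot\nabla i_u\Vert_K^2)^{1/2}$.

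For the lower bound I would use the bubble functions of Lemma~\ref{lem:bf} to define
\[
v_\eta=\sum_K\tau_K\psi_K R_K+\gamma\sum_E\tau_E\psi_E R_E ,
\]
with $\gamma$ small. The residual representation \eqref{eq:res1}, the coercivity of the bubbles and Young's inequality on the interior--edge cross terms (handled exactly as in Theorem~\ref{thm:ap-ver}) give $R_{u_h}(v_\eta)\gtrsim\sum_K\tau_K\Vert R_K\Vert_K^2+\sum_E\tau_E\Vert R_E\Vert_E^2\geq\eta^2$. On the other hand $R_{u_h}(v_\eta)=B(e,v_\eta)$ by Lemma~\ref{lem:error-residual}, and term-by-term inverse estimates show that every seminorm of $v_\eta$ entering $B(e,v_\eta)$ is controlled by $\tau_K^{1/2}\Vert R_K\Vert_K$ or $\tau_E^{1/2}\Vert R_E\Vert_E$ (for instance $k^{1/2}\Vert\nabla v_\eta\Vert_K\lesssim c_1^{-1/2}\tau_K^{1/2}\Vert R_K\Vert_K$ and $\tau_K^{-1/2}\Vert v_\eta\Vert_K\lesssim\tau_K^{1/2}\Vert R_K\Vert_K$), so that $B(e,v_\eta)\lesssim\vertiii{e}\,(\sum_K\tau_K\Vert R_K\Vert_K^2+\sum_E\tau_E\Vert R_E\Vert_E^2)^{1/2}$. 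Cancelling one common factor yields $\eta\lesssim\vertiii{e}$, which already implies $\eta-\zeta(i_u)\lesssim\vertiii{e}$; no data oscillation appears because $f\in V_h$.

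The main obstacle is confined to the upper bound: the term $S(e,i_u)$ inevitably leaves the convective interpolation contribution $(\sum_K\tau_K\Vert a\cdot\nabla i_u\Vert_K^2)^{1/2}$, and — unlike the diffusion and reaction parts of $\vertiii{i_u}$ — this quantity cannot be absorbed into $\vertiii{e}$ and does not reduce to $\zeta(i_u)$ in the convection-dominated limit. Stating the result as $\vertiii{e}\lesssim\eta+\zeta(i_u)$ therefore requires interpreting $\zeta(i_u)$ as the representative of the full interpolation error of $u$; moreover the whole upper estimate rests on Assumption~\ref{hyp-1}, which cannot be verified. This is exactly the sense in which, as announced before the statement, the John--Novo-type analysis is not fully satisfactory.
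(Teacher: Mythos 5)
Your overall skeleton (Theorem~\ref{thm1} plus Galerkin orthogonality plus the splitting $B_{\rm stab}(e,P_h^\bot e)=B(e,i_u)+S(e,i_u)$ for the upper bound; bubble functions for the lower bound) is the paper's, but your upper bound stops short of the claimed inequality, and the obstruction you declare essential is not. You end with the extra term $\bigl(\sum_K\tau_K\Vert a\cdot\nabla i_u\Vert_K^2\bigr)^{1/2}$ and assert it ``cannot be absorbed and does not reduce to $\zeta(i_u)$,'' so that the theorem would need a reinterpretation of $\zeta$. It is true that the paper is terse exactly here (in its Young step it silently replaces $\vertiii{P_h^\bot(e)}$ by $\vertiii{e}$, and the convective part of $\vertiii{i_u}$ is the one component for which this is not immediate), so your diagnosis of the soft spot is fair. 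But the term is absorbable under the very Assumption~\ref{hyp-1} you are already invoking: do not estimate $\tau_K\langle a\cdot\nabla i_u,P_h^\bot(R_K)\rangle_K$ by Cauchy--Schwarz on $a\cdot\nabla i_u$; integrate by parts instead ($a$ is constant):
\begin{align*}
\tau_K\langle a\cdot\nabla i_u , P_h^\bot(R_K)\rangle_K
= -\tau_K\langle i_u , a\cdot\nabla P_h^\bot(R_K)\rangle_K
+ \tau_K\langle (a\cdot n)\, i_u , P_h^\bot(R_K)\rangle_{\partial K}
\end{align*}
Since $f\in V_h$, $P_h^\bot(R_K)\vert_K$ is a polynomial, so \eqref{eq:invest} and \eqref{eq:tracein2} give $\Vert a\cdot\nabla P_h^\bot(R_K)\Vert_K\lesssim \frac{\vert a\vert}{h}\Vert P_h^\bot(R_K)\Vert_K$ and $\Vert P_h^\bot(R_K)\Vert_{\partial K}\lesssim h^{-1/2}\Vert P_h^\bot(R_K)\Vert_K$; using $\tau_K\vert a\vert/h\leq 1/c_2$ and $\tau_E\sim\tau_K/h$, both contributions are bounded by $\bigl(\tau_K^{-1}\Vert i_u\Vert^2_{{\cal T}_h}+\tau_E^{-1}\Vert i_u\Vert^2_{{\cal E}_h}\bigr)^{1/2}\,\tau_K^{1/2}\Vert P_h^\bot(R_K)\Vert_{{\cal T}_h}$, and Assumption~\ref{hyp-1} applied with $v=e$ (note $P_h^\bot(e)=i_u$, and this is where the edge part of the assumption earns its keep) yields $\lesssim\vertiii{e}\,\eta$. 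Hence $S(e,i_u)\lesssim\eta\,(\vertiii{e}+\zeta(i_u))$ — your diffusion piece already collapses since $P_h i_u=0$ — and Young's inequality closes the upper bound exactly as stated. So the gap is yours, not the theorem's.

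Your lower bound, by contrast, is correct and genuinely different from the paper's — and cleaner. You stay entirely with $B$: from $R_{u_h}(v_\eta)=B(e,v_\eta)$ (valid for any $v_\eta\in V$, as in Lemma~\ref{lem:error-residual}), the bubble coercivity gives $\Theta^2:=\sum_K\tau_K\Vert R_K\Vert_K^2+\sum_E\tau_E\Vert R_E\Vert_E^2\lesssim B(e,v_\eta)$, while direct Cauchy--Schwarz with inverse estimates and \eqref{eq:maxE} gives $B(e,v_\eta)\lesssim\vertiii{e}\,\Theta$ (the convective term pairs as $\tau_K^{1/2}\Vert a\cdot\nabla e\Vert\cdot\tau_K^{-1/2}\Vert v_\eta\Vert$, so no continuity assumption is needed). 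Cancelling $\Theta$ and using $\eta\leq\Theta$ — which holds because $\tau_K$ is constant, so the global $L^2$-projection bound $\Vert P_h^\bot R\Vert\leq\Vert R\Vert$ applies; say this explicitly, since elementwise it is false — gives the unconditional $\eta\lesssim\vertiii{e}$. The paper instead detours through $B_{\rm stab}$, Galerkin orthogonality and the quasi-continuity estimates \eqref{eq:semi-cont20}, \eqref{eq:semi-cont30} of Lemma~\ref{lem:for-thm40}, thereby invoking Assumption~\ref{hyp-1} and paying the $-\zeta(i_u)$ correction. Your route proves the stronger statement with fewer hypotheses, and it still localizes element-wise via the local bubbles, matching the paper's remark.
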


\begin{proof} {\bf Upper bound}: From Theorem~\ref{thm1} we have that
\begin{alignat}{3}
\vertiii{e}^2 - \zeta^2 (i_u)&  \lesssim B_{\rm stab}(e, \Lambda(e)) \quad && \quad  \hbox{}\nonumber\\
& = B_{\rm stab}(e, e) + {B_{\rm stab}(e, \tilde{e}_h)}\quad &&\quad  \hbox{from the definition of  $\Lambda(e)$} \nonumber\\
& = B_{\rm stab}(e, P^\bot_h(e)) \quad && \quad \hbox{from consistency}\nonumber\\
& = B(e, P^\bot_h(e)) + S(e,P^\bot_h(e))&&\quad  \hbox{from the definition of $B_{\rm stab}$}  \label{eq:thm-4-1}
\end{alignat}
Applying \eqref{eq:semi-cont10} and  \eqref{eq:semi-cont40} for $v=P^\bot_h(e) $ and noting that $P^\bot_h(e) = P^\bot_h(u) = i_u$ we obtain:
\begin{align*}
\vertiii{e}^2 - \zeta^2(i_u)  
& \lesssim  \eta ( \vertiii{P^\bot_h (e)} + \zeta(i_u))\nonumber\\
& \lesssim  \alpha  \eta^2 + \frac{1}{\alpha} \zeta^2 (i_u) +  \frac{1}{\alpha} \vertiii{e}^2 
\end{align*}
The upper bound follows by taking $\alpha$ large enough. 

{\bf Lower bound}: We proceed similarly to the proof of Theorem~\ref{thm:ap-ver}. The function $w_h$ we pick is the same, just replacing $\tau_{K,0}$ by $\tau_{K}$ and 
$\tau_{E,0}$ by $\tau_{E}$, i.e.,
$$ w_h = \sum_K \tau_{K} \psi_K R_K + \gamma \sum_E \tau_E \psi_E R_E $$
with $\gamma$ to be determined. As in Theorem~\ref{thm:ap-ver}, we have that
\begin{align}
& \vertiii{w_h}^2  \lesssim  \sum_K  \tau_{K}^2 \vertiii{\psi_K R_K}_{K}^2 
+  \gamma^2 \sum_E \tau_E^2 \vertiii{\psi_E R_E}^2_{\tilde{E}} \nonumber\\
& \quad \lesssim  \sum_K  \tau_{K}^2 \left( k \frac{1}{h^2} + \tau_K \frac{\vert a\vert^2}{h^2} +  s\right) \Vert \psi_K R_K\Vert_{K}^2  
+  \gamma^2 \sum_E  \tau_{E}^2 \left( k \frac{1}{h^2} + \tau_K \frac{\vert a\vert^2}{h^2} +  s\right) \Vert \psi_E R_E\Vert_{\tilde{E}}^2 \nonumber\\
& \quad \lesssim  \sum_K  \tau_{K} \Vert {R_K}\Vert_{K}^2 +  \gamma^2 \sum_E \tau_E^2 \tau_K^{-1}{h} \Vert R_E \Vert^2_{E}\nonumber\\
& \quad \lesssim \eta^2 \el 
\end{align}
To arrive at this result, we need to use inverse estimates and the fact that $\tau_K\sim \tau_E {h}$, as well as \eqref{eq:maxE}. Note that $\psi_K R_K$ and $\psi_E R_E$ are polynomials.

On the other hand, noting that $\psi_K R_K$ is zero on all edges, using Lemma~\ref{lem3} we have that
\begin{align}
& \vert B(e,w_h)\vert  = \vert B(u,w_h) - B(u_h,w_h)\vert = \vert L(w_h) - B(u_h,w_h)\vert = \vert R_{u_h} (w_h)\vert \nonumber\\
& \quad = \left\vert \sum_K \langle R_K , w_h\rangle_K + \sum_E \langle R_E , w_h\rangle_E\right\vert  \nonumber\\
& \quad =  \left\vert \sum_K \left[  \tau_{K} \langle R_K , \psi_K R_K\rangle_K + \gamma \sum_{E\subset \partial K}  \tau_E \langle R_K ,  \psi_E R_E\rangle_K \right] 
+  \sum_E  \gamma \tau_E \langle R_E ,  \psi_E R_E\rangle_E \right\vert \nonumber\\
&\quad  \gtrsim  \sum_K  \tau_{K} \Vert R_K\Vert_K^2 +  \gamma  \sum_E \tau_E \Vert R_E\Vert_E^2 
-  \gamma \sum_K  \sum_{E\subset \partial K} \tau_E \Vert R_K \Vert_K \Vert \psi_E R_E\Vert_K\nonumber\\
& \quad  \gtrsim   \sum_K  \tau_{K} \Vert R_K\Vert_K^2 +  \gamma  \sum_E \tau_E \Vert R_E\Vert_E^2 
-  \gamma \sum_K  \sum_{E\subset \partial K} \tau_E {h^{1/2}} \Vert R_K \Vert_K \Vert R_E\Vert_E \nonumber\\
&\quad  \gtrsim   \sum_K \tau_{K}\Vert R_K\Vert_K^2 +  \gamma  \sum_E \tau_E \Vert R_E\Vert_E^2 
-  \gamma \left[\sum_K \beta  \tau_{K} \Vert R_K\Vert_K^2 + \frac{1}{\beta}\sum_E \tau_E\Vert R_E\Vert^2_E\right]\nonumber\\
&\quad \gtrsim \eta^2 
\end{align}
where $\beta$ sufficiently large and $\gamma$ sufficiently small have been taken in the last step. Once again, we have used \eqref{eq:maxE} and that $\tau_K\sim \tau_E {h}$. We can now apply Lemma~\ref{lem:for-thm40} for $v=w_h$, noting that we can take $\zeta(i_{w_h}) = 0$, because, despite not belonging to the FE space, $w_h$ is a polynomial, and we can bound its Laplacian using an inverse estimate. Therefore, from the previous inequality and Lemma~\ref{lem:for-thm40} we have that
\begin{align*}
\eta^2 & \lesssim  \vert  B(e, w_h) \vert  \nonumber\\
& \lesssim \vert B_{\rm stab}(e,w_h) \vert + \vert S(e, w_h)\vert \nonumber\\
& \lesssim \vert B_{\rm stab}(e,P^\bot_h w_h) \vert + \vert S(e, w_h)\vert \nonumber\\
& \lesssim \vert B(e,P^\bot_h w_h) \vert + \vert S(e, P^\bot_h w_h)\vert + \vert S(e, w_h)\vert  \nonumber\\
& \lesssim (\vertiii{e} + \zeta(i_u))  \vertiii{w_h} \nonumber\\
& \lesssim (\vertiii{e} + \zeta(i_u))  \eta\el
\end{align*}
from which the lower bound follows.
\end{proof}

\begin{rem} The lower bound can in fact be localized element-wise, as it is done in \cite{john-novo-2013}.
\end{rem}

\subsection*{On the significance of Assumption~\ref{hyp-1}}

The natural way to bound the convective term is
\begin{align}
(a\cdot\nabla u , P_h^\bot(v) ) & \leq \tau_K^{1/2} \Vert a\cdot\nabla u \Vert  \tau_K^{-1/2}  \Vert P_h^\bot(v)\Vert \nonumber\\
& \leq \vertiii{u}  \tau_K^{-1/2}  \Vert P_h^\bot(v)\Vert \el
\end{align}
We wish to understand the implications of assumption \eqref{eq:hyp-cont}, considering only the first term involving norms over elements. We have that
\begin{align}
\tau_K^{-1}  \Vert P_h^\bot(v)\Vert^2 & \lesssim \left( \frac{k}{h^2} + \frac{\vert a \vert}{h} + s\right)   \Vert P_h^\bot(v)\Vert^2  \nonumber\\
& \lesssim \max\left\{\frac{k}{h^2} , \frac{\vert a \vert}{h} , s \right\} \Vert P_h^\bot(v)\Vert^2  \el
\end{align}
Let us discuss the different scenarios we can encounter:
\begin{itemize}
\item If  $\max\left\{\frac{k}{h^2} , \frac{\vert a \vert}{h} , s \right\} = \frac{k}{h^2}$ (diffusion-dominated case):
$
\tau^{-1}_K  \Vert P_h^\bot(v)\Vert^2  \lesssim  \frac{k}{h^2} h^2 \Vert \nabla v \Vert^2 \lesssim \vertiii{v}^2
$. This is always true.
\item If  $\max\left\{\frac{k}{h^2} , \frac{\vert a \vert}{h} , s \right\} = s$ (reaction-dominated case):
$
\tau^{-1}_K  \Vert P_h^\bot(v)\Vert^2  \lesssim  s \Vert  v \Vert^2 \lesssim \vertiii{v}^2
$. This is always true.
\item If  $\max\left\{\frac{k}{h^2} , \frac{\vert a \vert}{h} , s \right\} =  \frac{\vert a \vert}{h}$ (convection-dominated case). Suppose that $d=2$, for simplicity, and let $a_\bot$ be a vector orthogonal to $a$ with the same modulus. We have that
\begin{align}
\tau^{-1}_K  \Vert P_h^\bot(v)\Vert^2 & \lesssim \frac{\vert a \vert}{h}  \frac{h^2}{\vert a \vert^2} \left( \Vert a\cdot \nabla v\Vert^2 +  \Vert a_\bot \cdot \nabla v\Vert^2 \right)\nonumber\\
 & \lesssim \frac{1}{\frac{k}{h^2} + \frac{\vert a \vert}{h} + s} \left( \Vert a\cdot \nabla v\Vert^2 +  \Vert a_\bot \cdot \nabla v\Vert^2 \right)\nonumber\\
 & \lesssim \tau_K  \left( \Vert a\cdot \nabla v\Vert^2 +  \Vert a_\bot \cdot \nabla v\Vert^2 \right)\el
\end{align}
\end{itemize}
Therefore, the only conflictive case is the convection-dominated one, and the assumption we need to use holds if
\begin{align}
\Vert a_\bot \cdot \nabla v\Vert^2 \lesssim  \Vert a\cdot \nabla v\Vert^2\el
\end{align}
or, alternatively,
\begin{align}
\Vert  \nabla v\Vert^2 \lesssim \frac{1}{\vert a \vert^2} \Vert a\cdot \nabla v\Vert^2\el
\end{align}
when convection is dominant. It means that all derivatives can be bounded in terms of the streamline derivative, a reasonable situation in convection-dominated flows but possibly unprovable. 

\section{Numerical results}

In this section, we provide some numerical examples to evaluate the performance of the OSGS method and examine the behavior of the APEE. In the first example, we propose a manufactured solution to analyze the performance of the APEE in a convection-dominated case on a simple square-shaped domain. In the second problem, we assess the diffusion-dominated case analogously using the same manufactured solution. For the third test case, we choose a different manufactured solution, involving large gradients. Finally, we analyze the behavior of the APEE in an L-shaped benchmark problem. For this example, we do not have an exact solution; instead, we considered a solution of a very fine mesh as the reference solution for computing the error. The numerical results are compared with the ASGS method in all the examples. Recall that both the stabilized formulation and the APEE include the contribution of the SGSs in the element interiors and on the edges. 

We assess the performance of the APEE in terms of the effectivity index, which is introduced to evaluate the quality and accuracy of the APEE. The effectivity index is denoted as $\mathcal{I}_{\mathrm{eff}}$, and it is defined as the ratio of the estimated and exact error:
\begin{equation*}
    \mathcal{I}_{\mathrm{eff}} = \frac{\text{Estimated error}}{\text{Exact error}} = \frac{\eta}{\vertiii{e}}
\end{equation*}
where $\eta=\left( \sum_K{\eta_K^2}\right)^{1/2}$ is the global error estimator. Ideally, the effectivity index should be close to unity, meaning that the APEE accurately captures the error. To evaluate the method's error convergence, we compute the error $e$ in the $L^2$ norm ${\left\|e\right\|}$ and in the stabilized norm $\vertiii{e}$ for different mesh sizes, normalized by the respective norm of the reference solution
 \begin{equation*}
     {\left\|e\right\|}= \frac{\left\|u-u_h\right\|}{\left\|u\right\|}, \quad 
     \vertiii{e}^2 =\frac{ {k {\left\|\nabla (u-u_h) \right\|}^2+\ s{\left\| u- u_h \right\|}^2+\tau_{K} {\left\|a \cdot \nabla (u-u_h)\right\|}^2}}{{k {\left\|\nabla u\right\|}^2+\ s{\left\| u \right\|}^2+\tau_{K} {\left\|a \cdot \nabla u \right\|}^2}}
 \end{equation*}
where $u$ and $u_h$ represent the exact (or sufficiently converged) and FE solution, respectively. 

\subsection{Convection-dominated problem} \label{ex1}

The first numerical example is intended to assess the performance of the APEE for a convection-dominated problem. Here, the contribution of the SGSs on the element edges in both the APEE and in the stabilized formulation is negligible because the diffusion term is very small. Hence, it is an ideal setting to observe the performance of the SGSs in the element interiors as an error estimator. The exact solution is represented by a two-dimensional polynomial similar to the one presented in~\cite{john2000numerical}. 
The computational domain has been chosen as the unit square, i.e., $\Omega = (0,1)\times (0,1)$. We have taken $k =10^{-5}$, $a=[0.4,0.7]$, $s=10^{-5}$, and the forcing term $f$ on the right-hand side of \eqref{modprob} has been chosen such that
\begin{equation*}
    u=100(1-x)^2 x^2 y (1-2y) (1-y)
\end{equation*}
is the exact solution (also known as the manufactured solution), where $x$ and $y$ are Cartesian coordinates. Note that this solution vanishes on $\partial\Omega$. Observe also that $f \not \in V_h$, as we have assumed to simplify the analysis. In this case, $f$ has to be included in the element-wise residuals $R_K$.

We have discretized $\Omega$ with a uniform bilinear quadrilateral mesh.
The rate of convergence of the error in the $L^2$ norm has been found to be $h^{p+1} = h^2$ ($p=1$), which is the optimal rate of convergence for linear elements. The error convergence rate for the stabilized norm that has been obtained is $h^{p+1/2} = h^{3/2}$, which confirms that the problem is convection-dominated. 
It is remarkable that the convergence of the VMS-based global error estimator $\eta$ that we computed is very similar to that of the error in the stabilized norm, showing an excellent agreement between the estimated and true error in this norm. The effectivity index converges to $1$, indicating that the APEE has precisely recovered the error. 
Fig.~\ref{ex1errcon} shows the error convergence rate and the effectivity indices for different mesh sizes for both OSGS and ASGS methods. It can be observed that the effectivity index remains constant when varying the mesh size. It also shows the convergence of the APEE and the stabilized error norm at the same rate.

In Fig.~\ref{ex1etaplot}, the VMS-based local error estimator $\eta_{K}$ is compared with the stabilized error norm $\vertiii{e}_K$ for each element $K$ on an exemplary mesh comprising $20 \times 20$ bilinear elements. While the values do not match exactly, they show closely related results both for the OSGS and ASGS methods. 
 
\begin{figure}[h]
    \centering
    \includegraphics[width=0.41\textwidth]{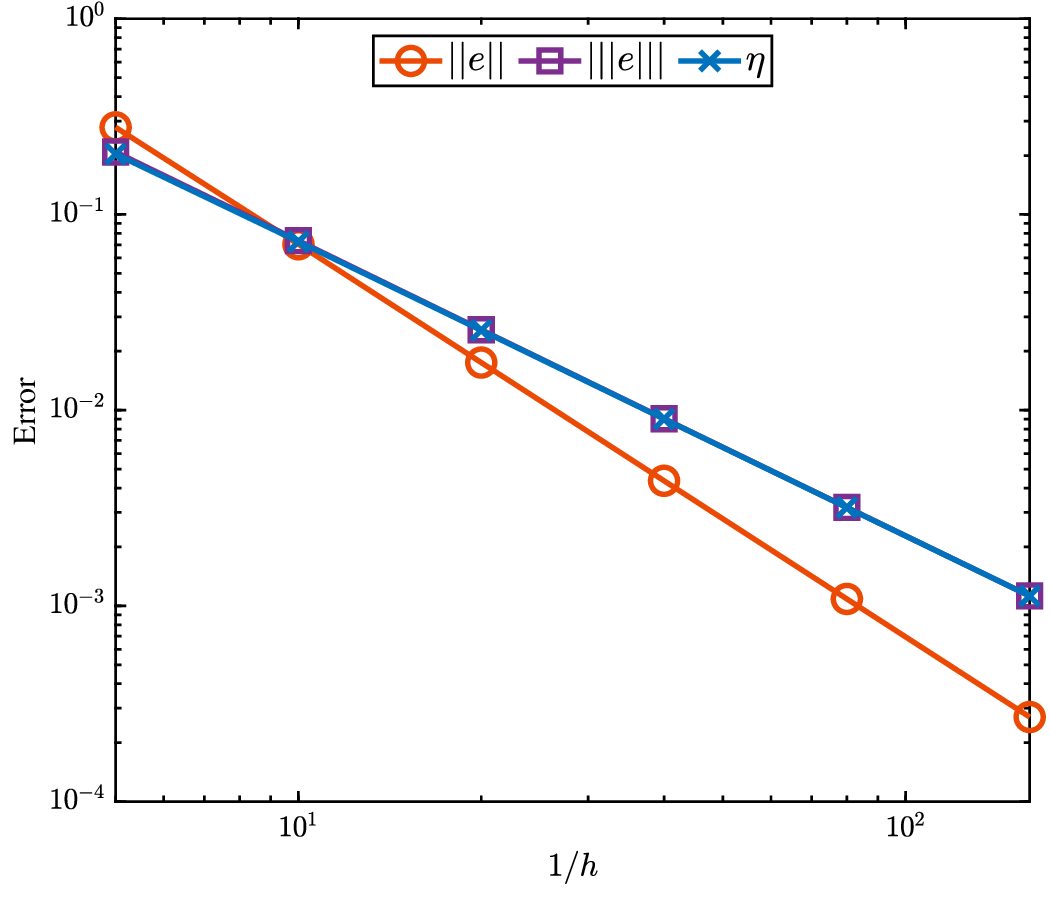}
    \includegraphics[width=0.41\textwidth]{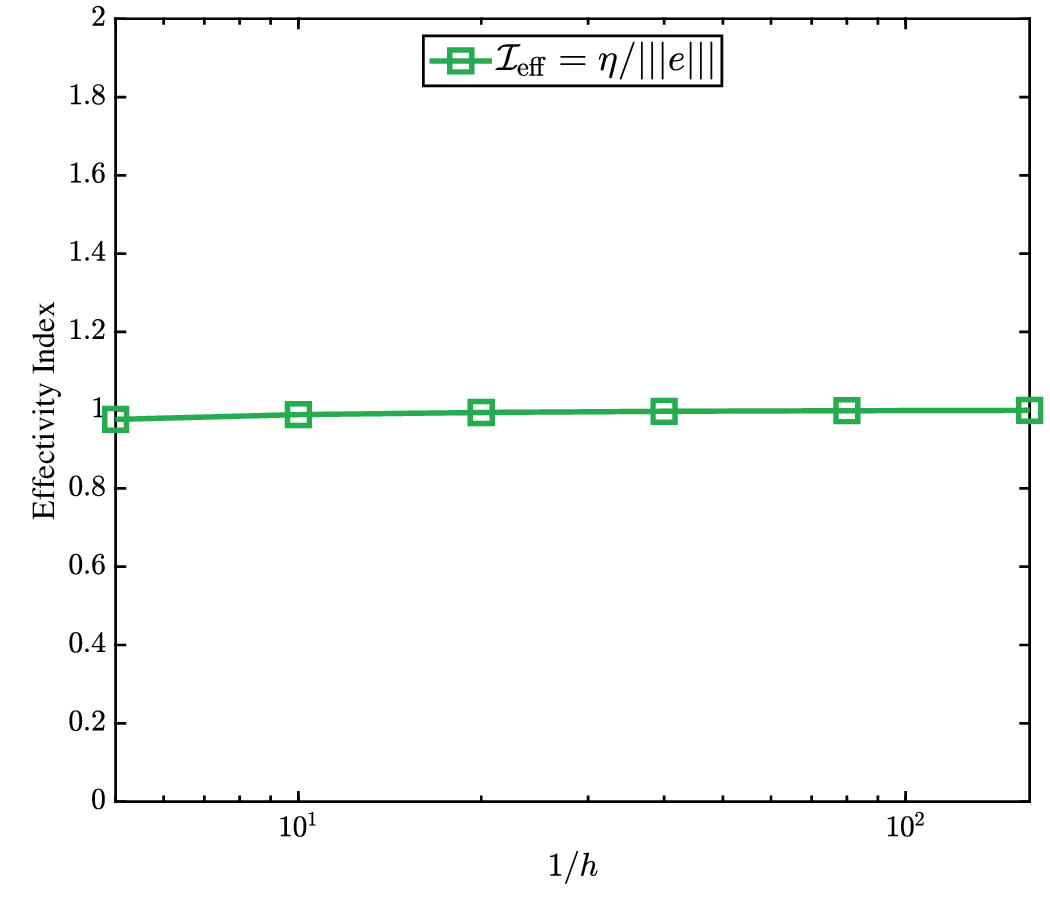}
    \includegraphics[width=0.41\textwidth]{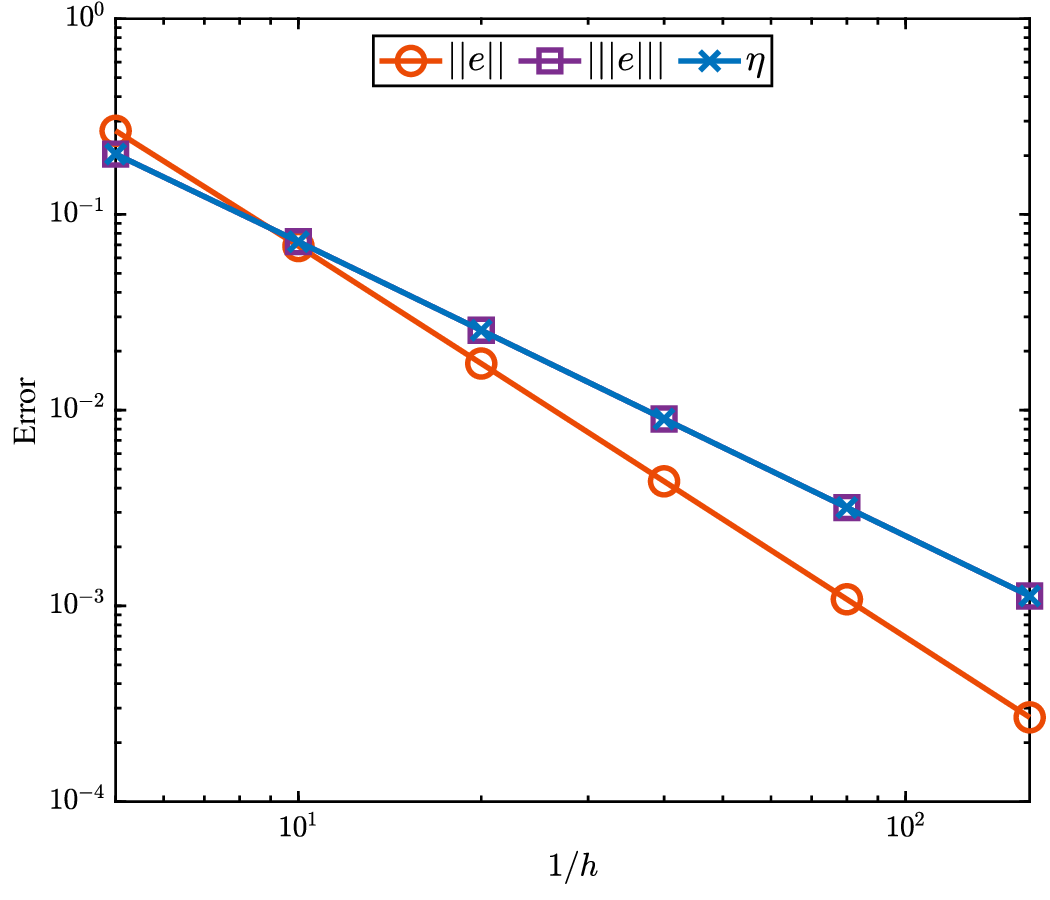}
    \includegraphics[width=0.41\textwidth]{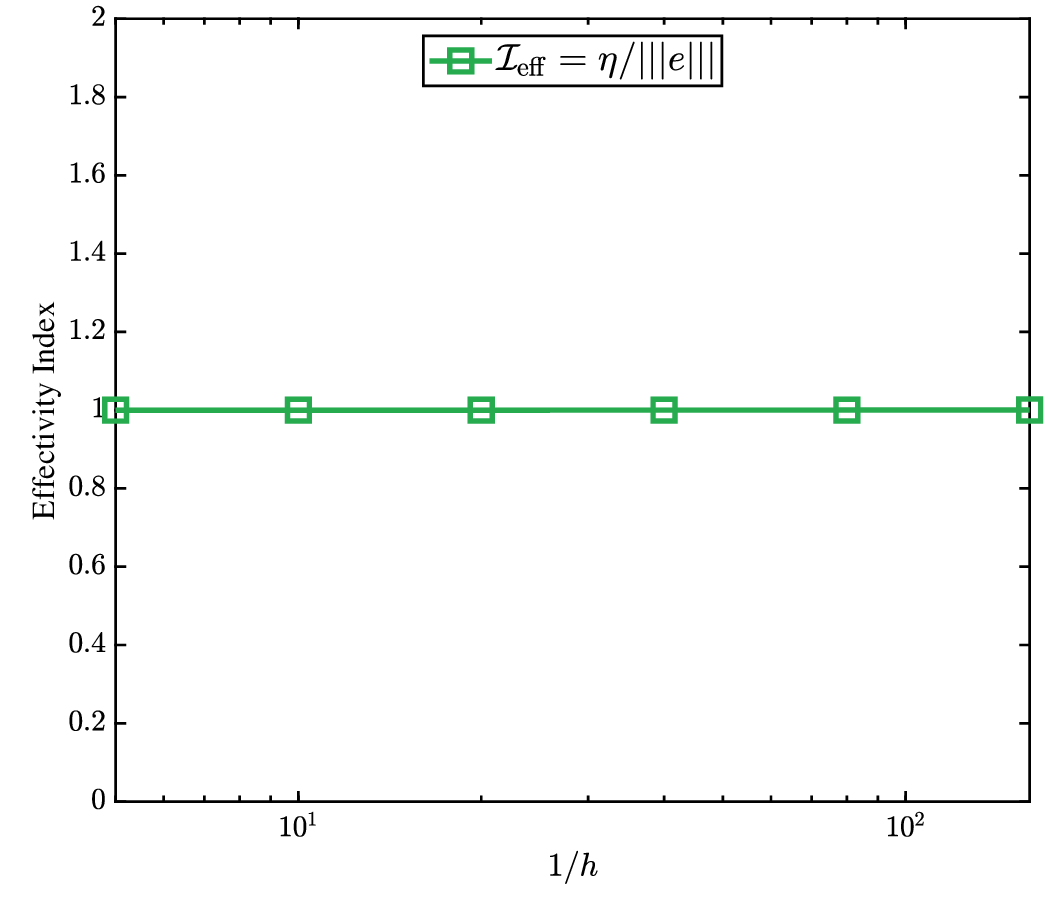}
    \caption{Results of example~\ref{ex1}. The figures on the left represent the error convergence in the $L^2$ norm, the stabilized norm and the APEE, the ones in the right represent the global effectivity index. Top: OSGS method; bottom: ASGS method.}
    \label{ex1errcon}
\end{figure}

\begin{figure}[h]
    \centering
    \includegraphics[width=0.41 \textwidth]{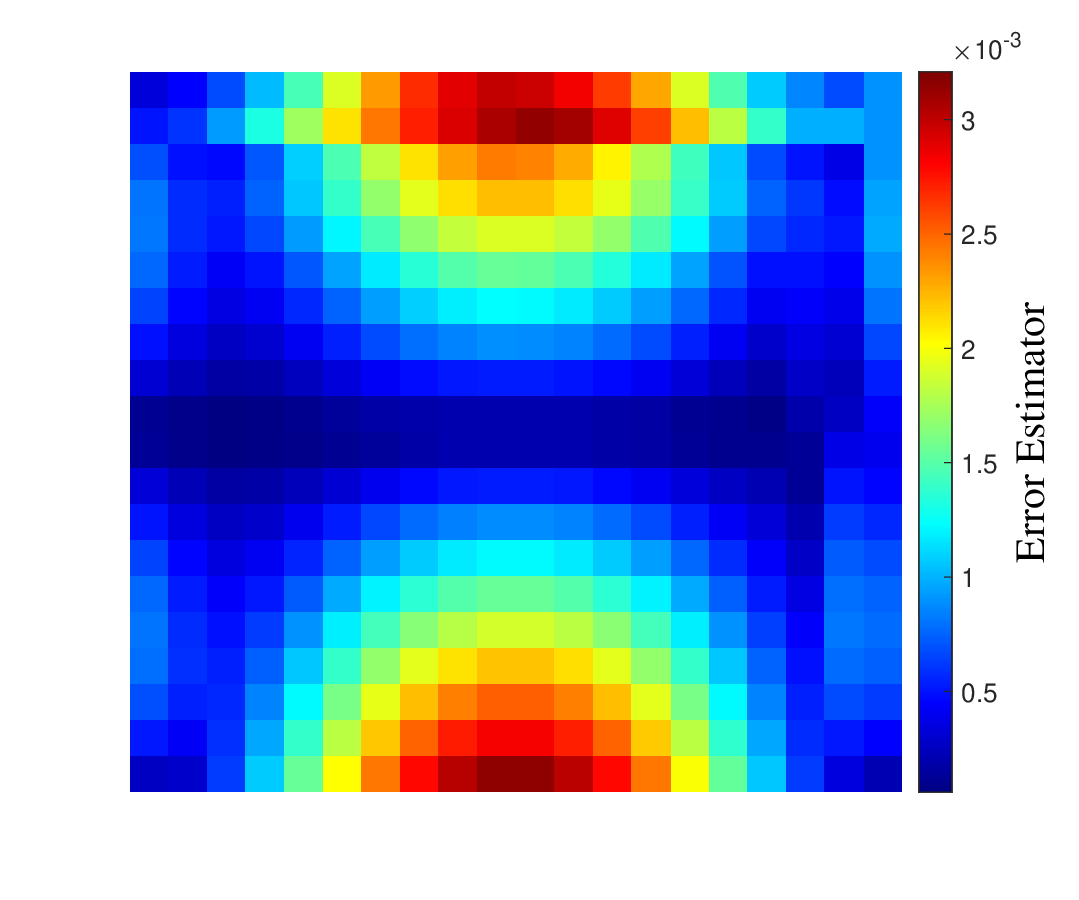}
    \includegraphics[width=0.41 \textwidth]{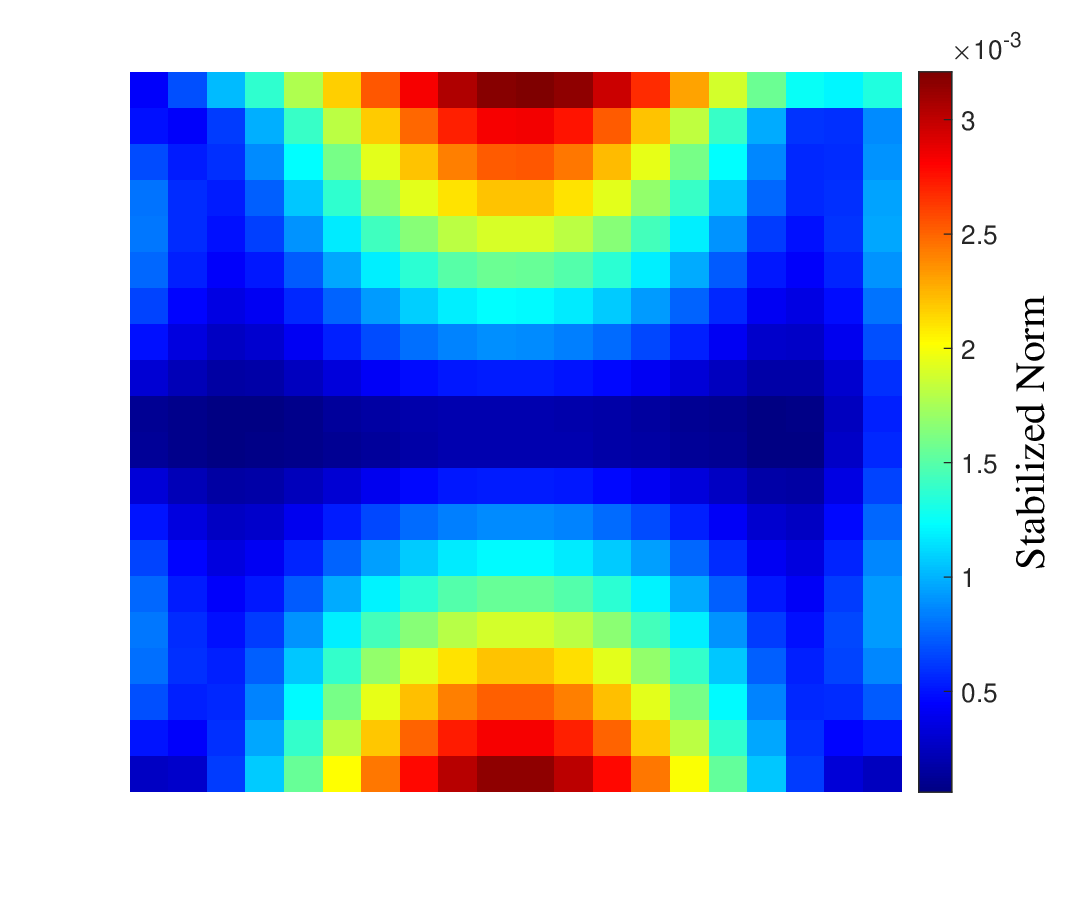}
    \includegraphics[width=0.41 \textwidth]{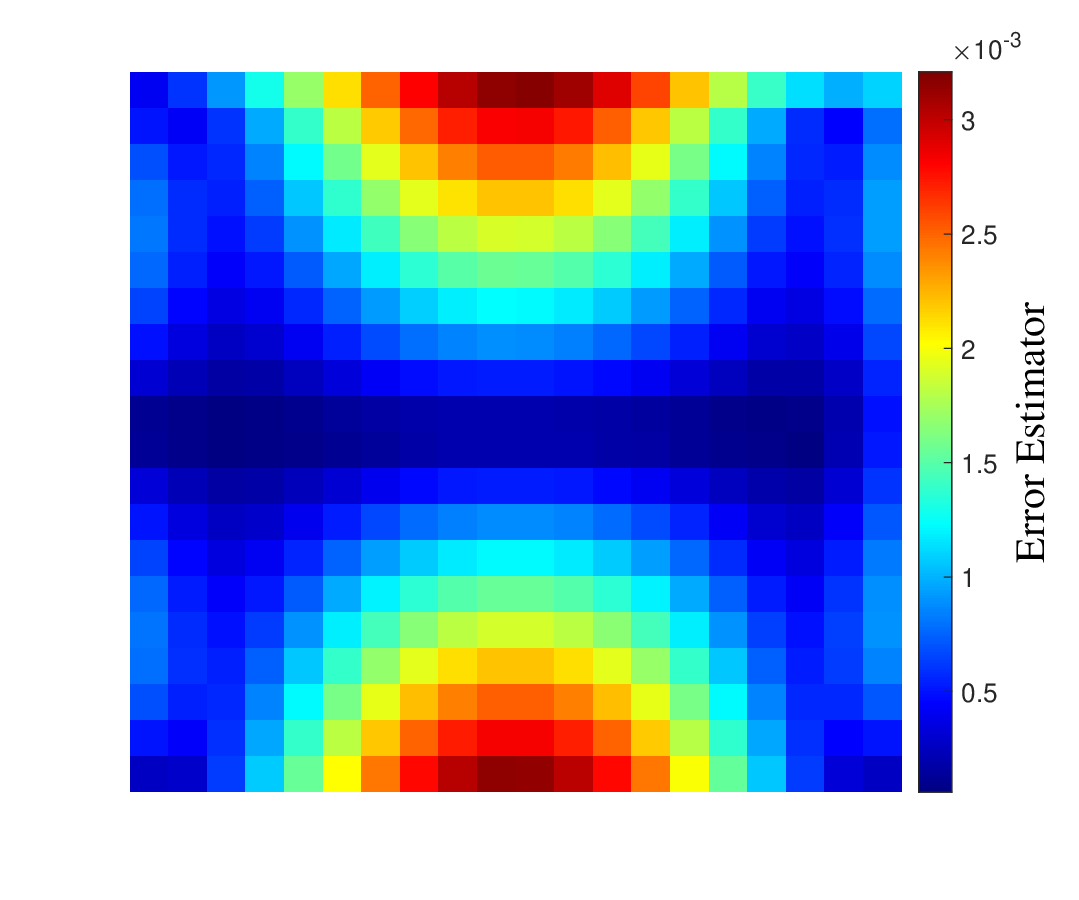}
    \includegraphics[width=0.41 \textwidth]{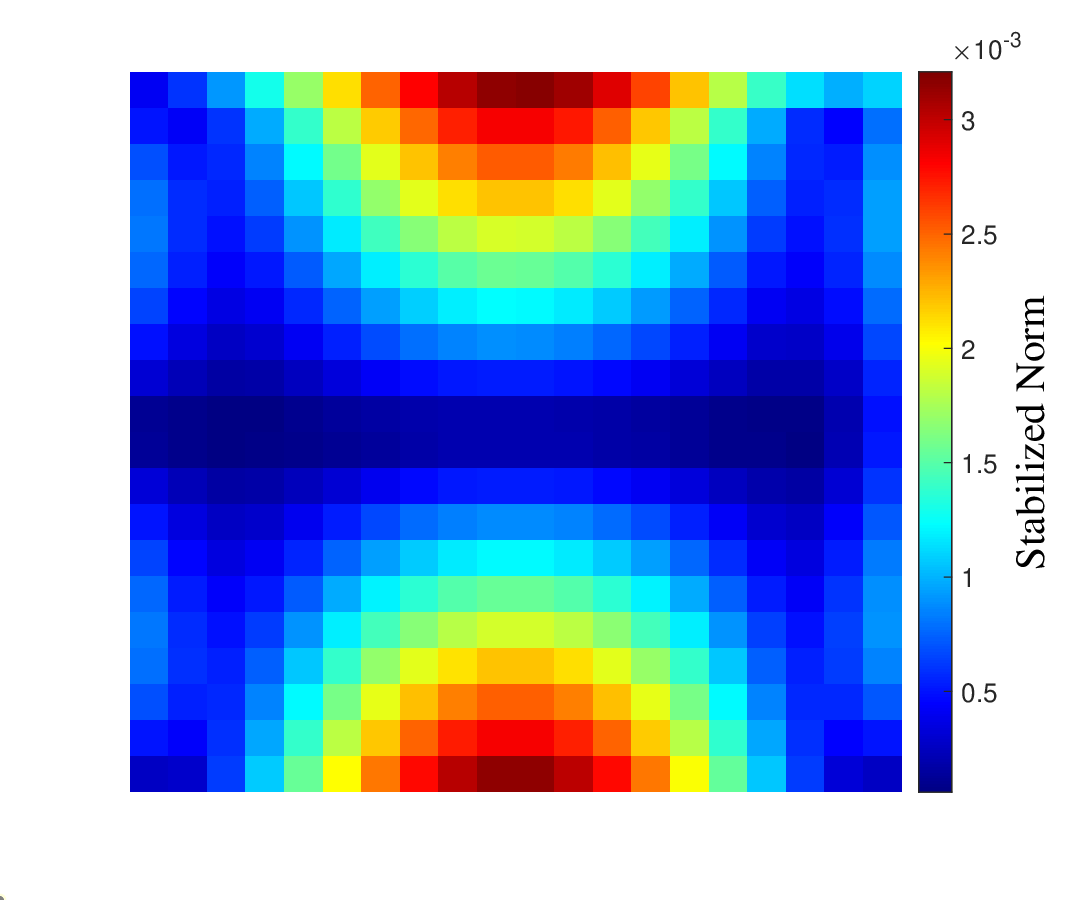}
    \caption{Results of example~\ref{ex1}. The figures on the left present the SGSs error estimator and on the right the stabilized norm error contribution. Top: OSGS method; bottom: ASGS method.}
    \label{ex1etaplot}
\end{figure}

\subsection{Diffusion-dominated problem}\label{example-2}

In this numerical example, we have tested the diffusion-dominated case. Here, the contribution of the SGSs on the inter-element boundaries is crucial. We chose $k =1$, $a=10^{-5}\,[0.4,0.7]$, $s=10^{-5}$, and the forcing term $f$ is determined by using the same manufactured solution as in Example~\ref{ex1}. The computational domain $\Omega$ is also the same. 
From the difference between the two finest meshes in Fig.~\ref{ex2errcon}, we obtain the optimal error convergence rates of $2.0$ in the $L^2$ norm, $1.0$ in the stabilized norm, and $1.0$ for the global error estimator. Note that, for a diffusion-dominated problem, the theoretical convergence rate of $h^p = h$ ($p=1$) is expected. The global error estimator converges at the same convergence rate. 
The effectivity index converges to $1.1$ and $2.3$ for the OSGS and ASGS methods, respectively. It is interesting to see that the effectivity index of the OSGS method is much closer to one, indicating a more accurate error estimation. 

Again, we analyze the behavior of the local error estimator and the stabilized error norm in each element. A comparison of the local error estimator and the stabilized error contributions of the OSGS and ASGS methods is presented in Fig.~\ref{ex2etaplot}. It is observed that the OSGS-based error estimator yields a better match with the stabilized error norm than the ASGS method. For the results in this figure we have again utilized a uniform mesh of $20 \times 20$ quadrilateral elements. It is remarkable that in this case the main difference between OSGS and ASGS is the use of $P^\bot_h(f)$ or just $f$ in the residual. This shows the convenience of considering the SGSs in the element interiors as orthogonal to the FE space.

\begin{figure}[h]
    \centering
    \includegraphics[width=0.41 \textwidth]{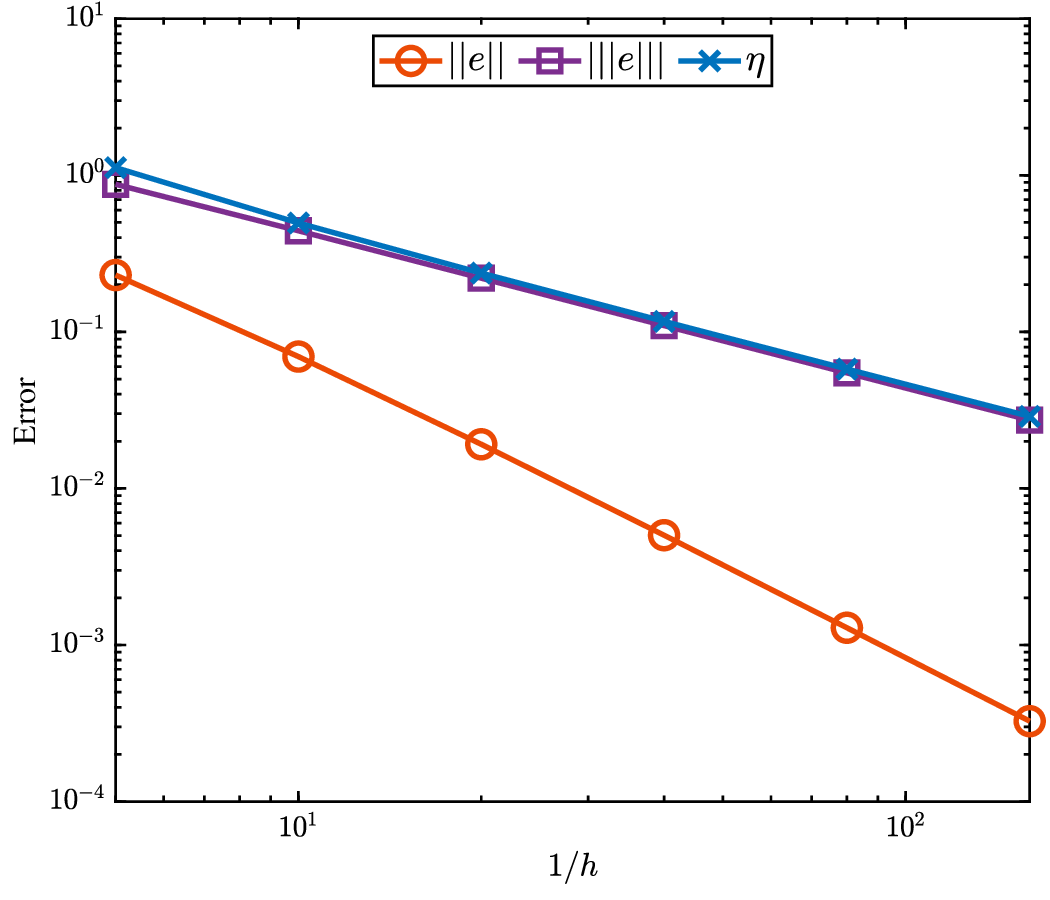}
    \includegraphics[width=0.41 \textwidth]{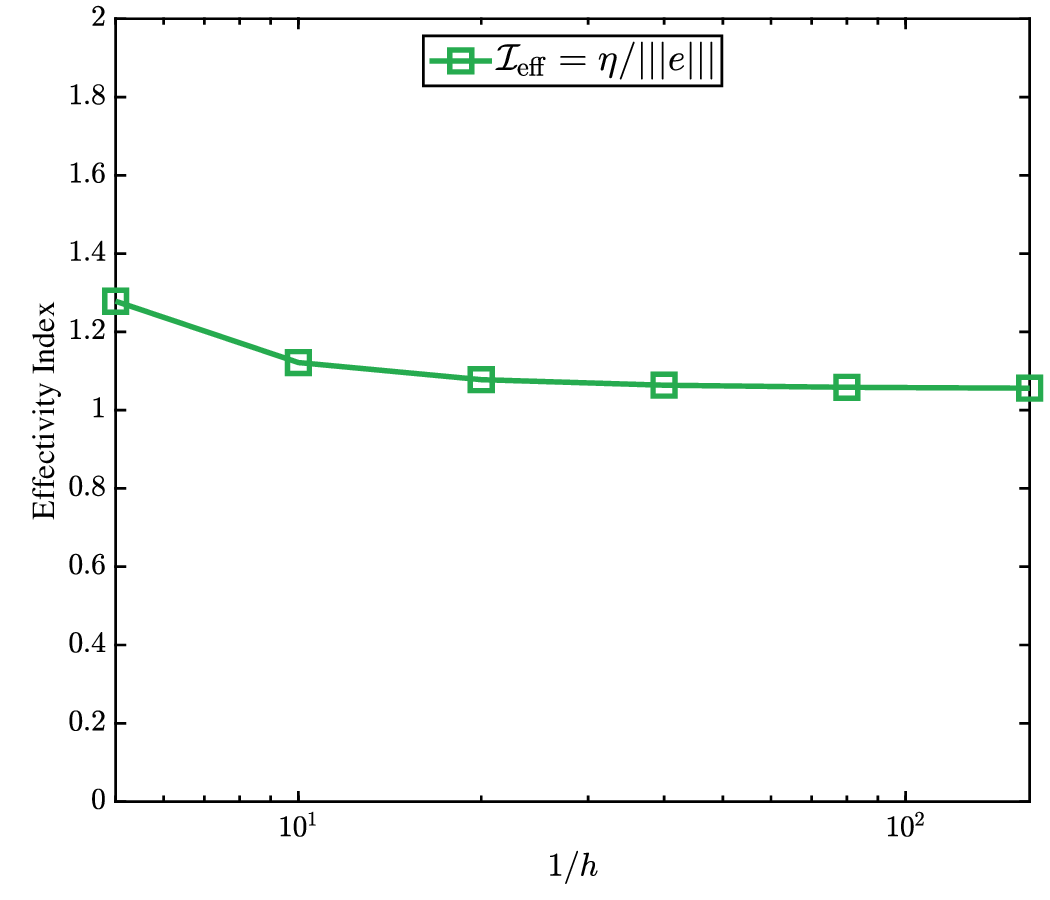}
    \includegraphics[width=0.41 \textwidth]{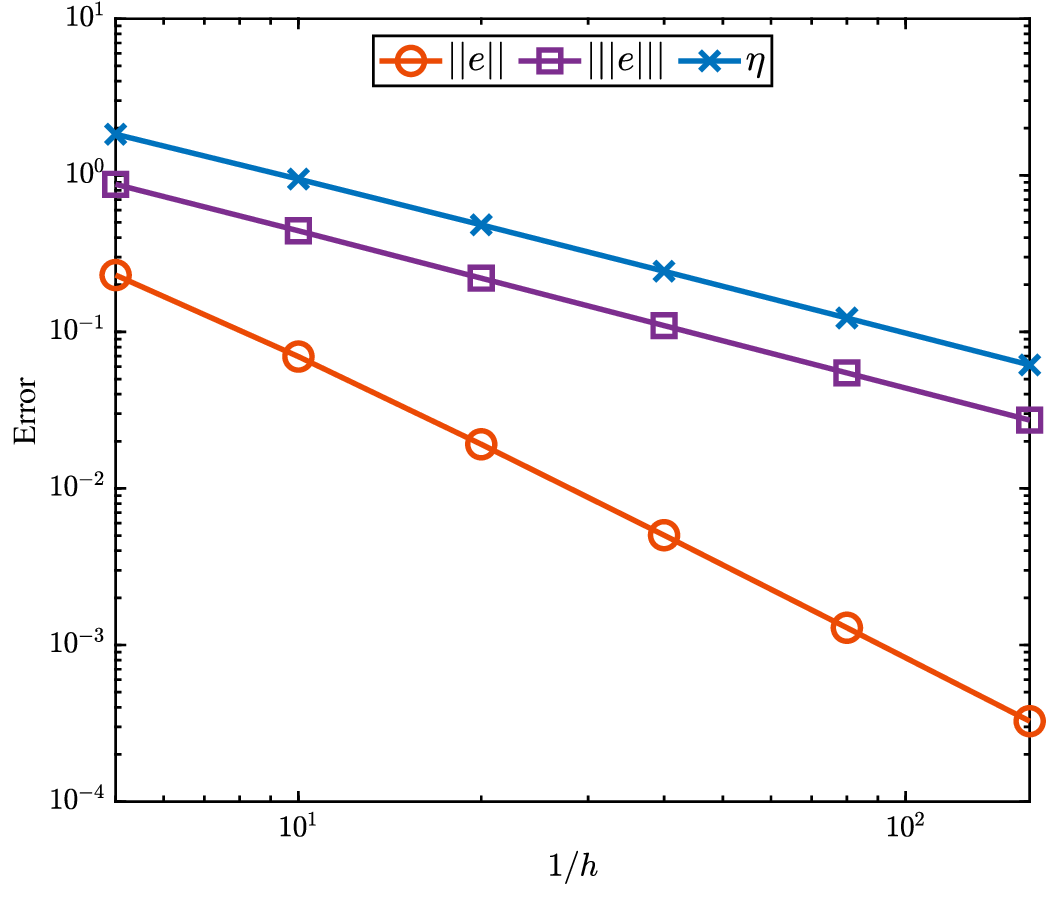}
    \includegraphics[width=0.41 \textwidth]{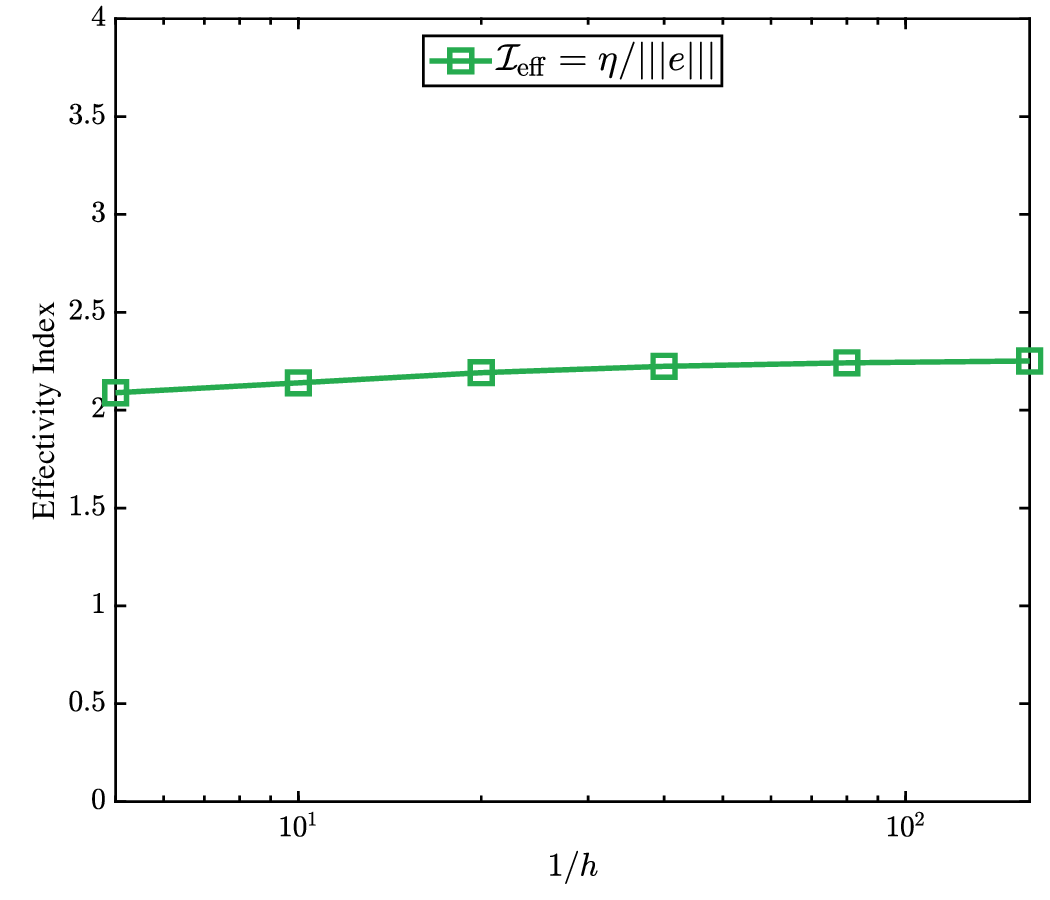}
    \caption{Results of example~\ref{example-2}. Left: The error convergence rates of the $L^2$ norm, the stabilized norm and the APEE for the diffusion-dominated problem. Right: The global effectivity index for the diffusion-dominated problem. Top: OSGS method; bottom: ASGS method.}
    \label{ex2errcon}
\end{figure}
\begin{figure}[h]
    \centering
    \includegraphics[width=0.41 \textwidth]{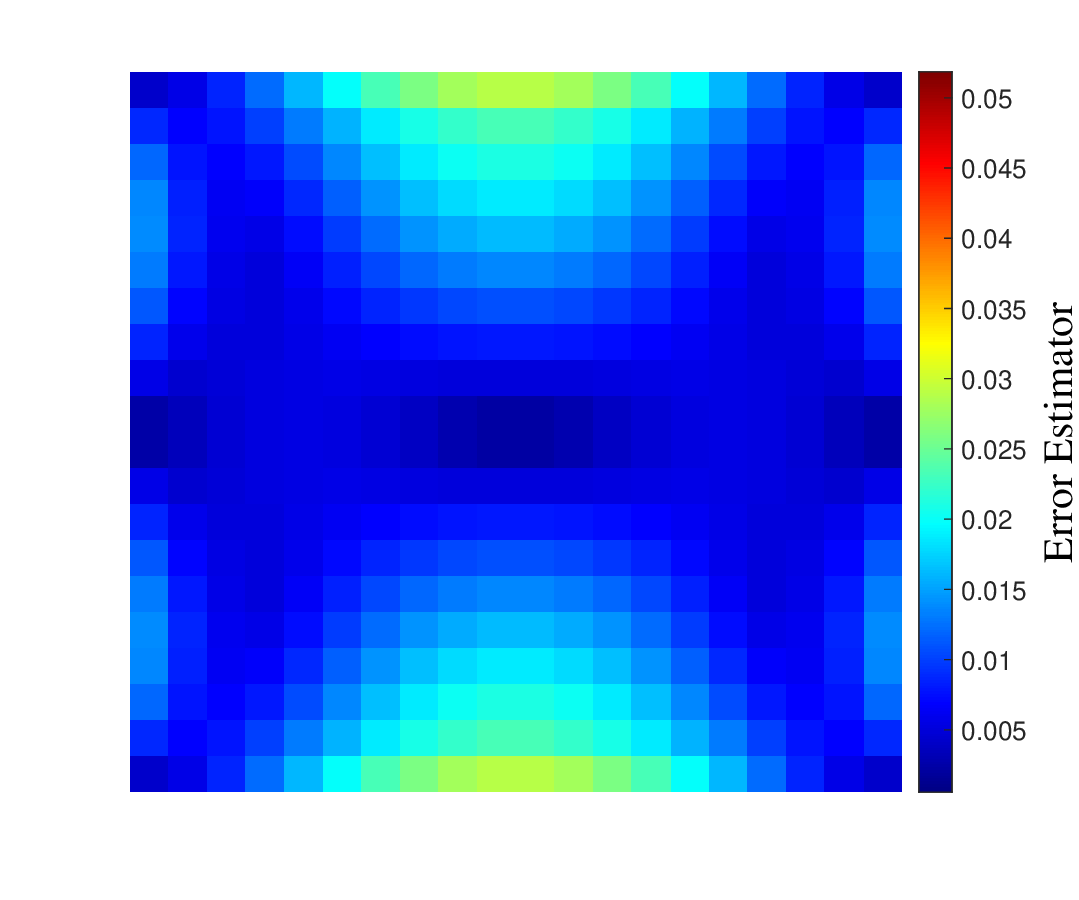}
    \includegraphics[width=0.41 \textwidth]{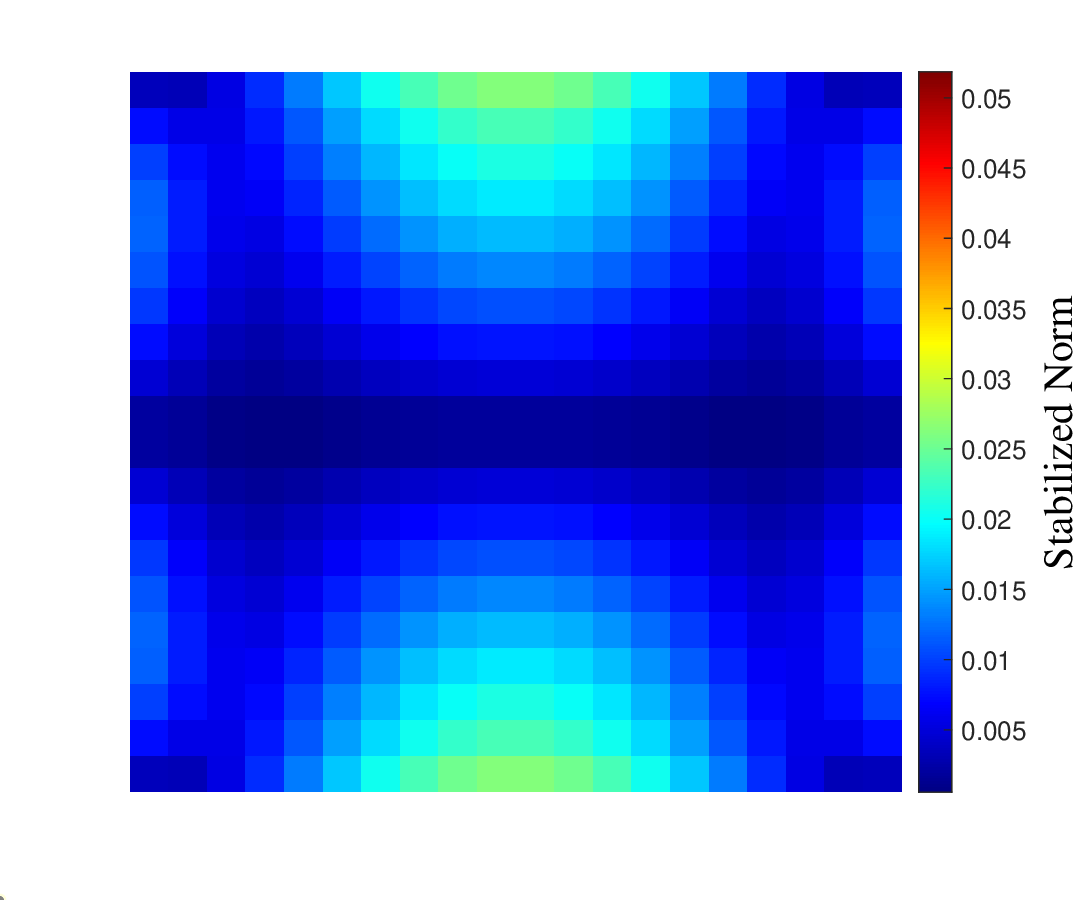}
    \includegraphics[width=0.41 \textwidth]{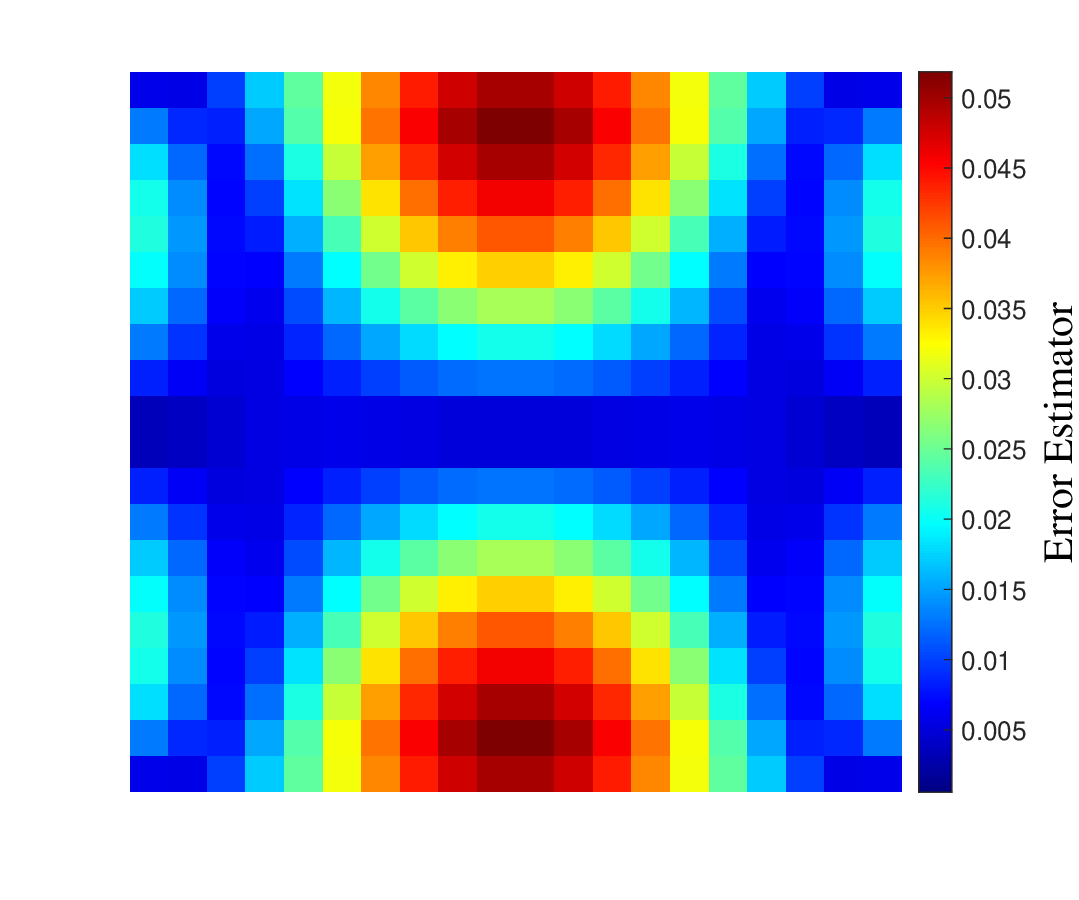}
    \includegraphics[width=0.41 \textwidth]{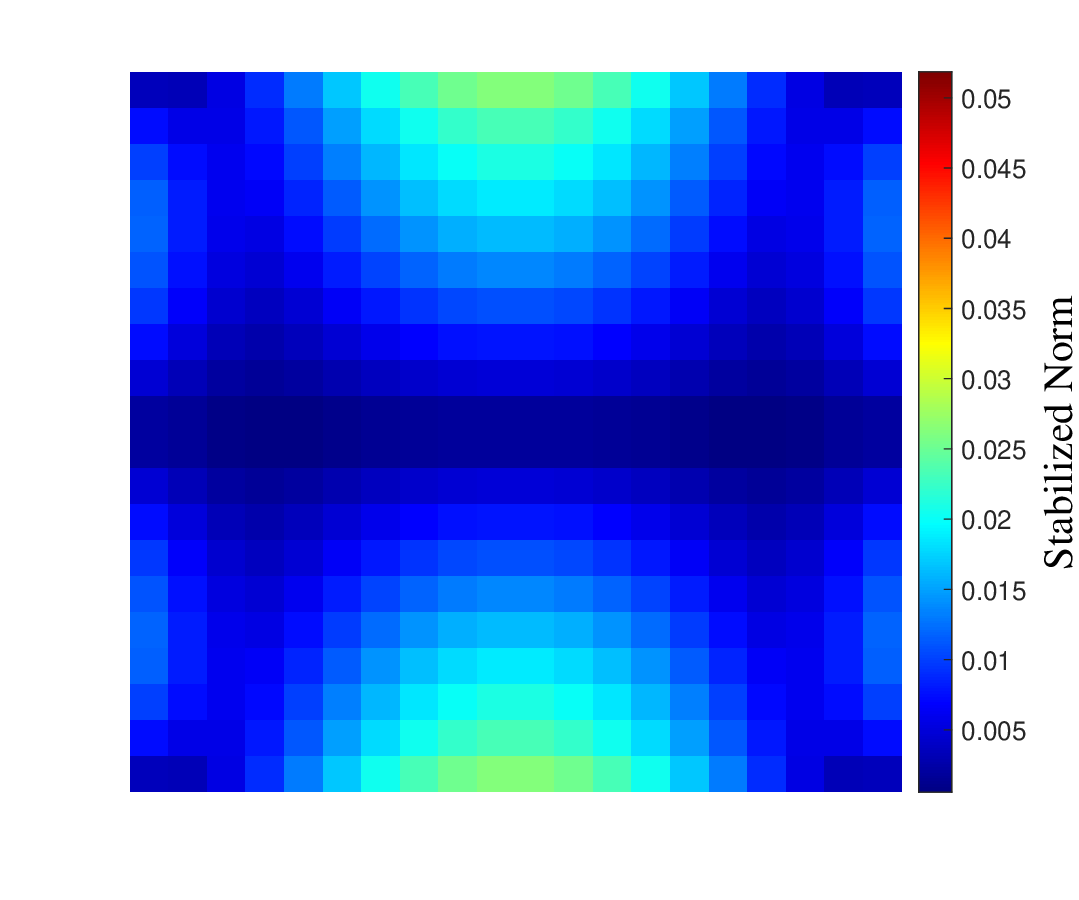}
    \caption{Results of example~\ref{example-2}. The figures on the left present the SGSs error estimator and on the right the stabilized norm error contribution. Top: OSGS method; bottom: ASGS method.}
    \label{ex2etaplot}
\end{figure}

\subsection{Strong boundary layer problem}\label{example-3}

This numerical example serves to analyze the performance of the APEE for a strong boundary layer problem. We have used the same square domain already employed in the previous examples. We have chosen $k =10^{-3}, a=[1,1]$, $s=1$, and the exact solution is given as 
\begin{equation*}
    u(x,y)=\left(x-\frac{e^{-(1-x)/k} - e^{-1/k}}{1-e^{-1/k}}\right) y\left(1-y\right)
\end{equation*}
We have again employed the concept of manufactured solutions to determine the forcing function $f$. Here, we analyze again the rate of convergence of the stabilized method and the behavior of the APEE. 

In Fig.~\ref{ex3errcon} (top), it can be observed that the APEE based on the OSGS method once again converges with the same rate as the stabilized error norm. The effectivity index converges to $1$, meaning that the APEE accurately captures the true global error. We conclude that the OSGS-based error estimator is suitable for solutions involving boundary layers. Fig.~\ref{ex3errcon} (bottom) shows the error convergence plots of the ASGS method and the effectivity index, which converges to $1.6$ for a very fine mesh. Fig.~\ref{ex3etaplot} presents the local error contributions of both the error estimator and the stabilized error norm for the OSGS and ASGS methods, computed on a uniform $100 \times 100$ quadrilateral mesh. In Fig.~\ref{ex3sfsecplo}, the strong boundary layer can be observed by the contour plot of the scalar field and the plot of the solution along the line $y=0.5$. Oscillations occur using the standard Galerkin method and are magnified near the boundary layer. It can also be noticed that there are very small overshoots near the boundary layer when using the stabilized OSGS method, which should be expected, since stabilized FE methods provide globally stable solutions, but are not monotone. It is known that local oscillations near boundary layers are in general stronger using the OSGS formulation than the ASGS method, and this causes the error to spread to the interior of the domain, as it is observed in this and the following example (see Fig.~\ref{ex3etaplot} and Fig.~\ref{ex4etaplot}, described later). In any case, these local oscillations could be removed using a shock-capturing technique. The results of Fig.~\ref{ex3sfsecplo} have been obtained by using a uniform mesh of $30 \times 30$ quadrilateral elements.

\begin{figure}[h]
    \centering
    \includegraphics[width=0.41 \textwidth]{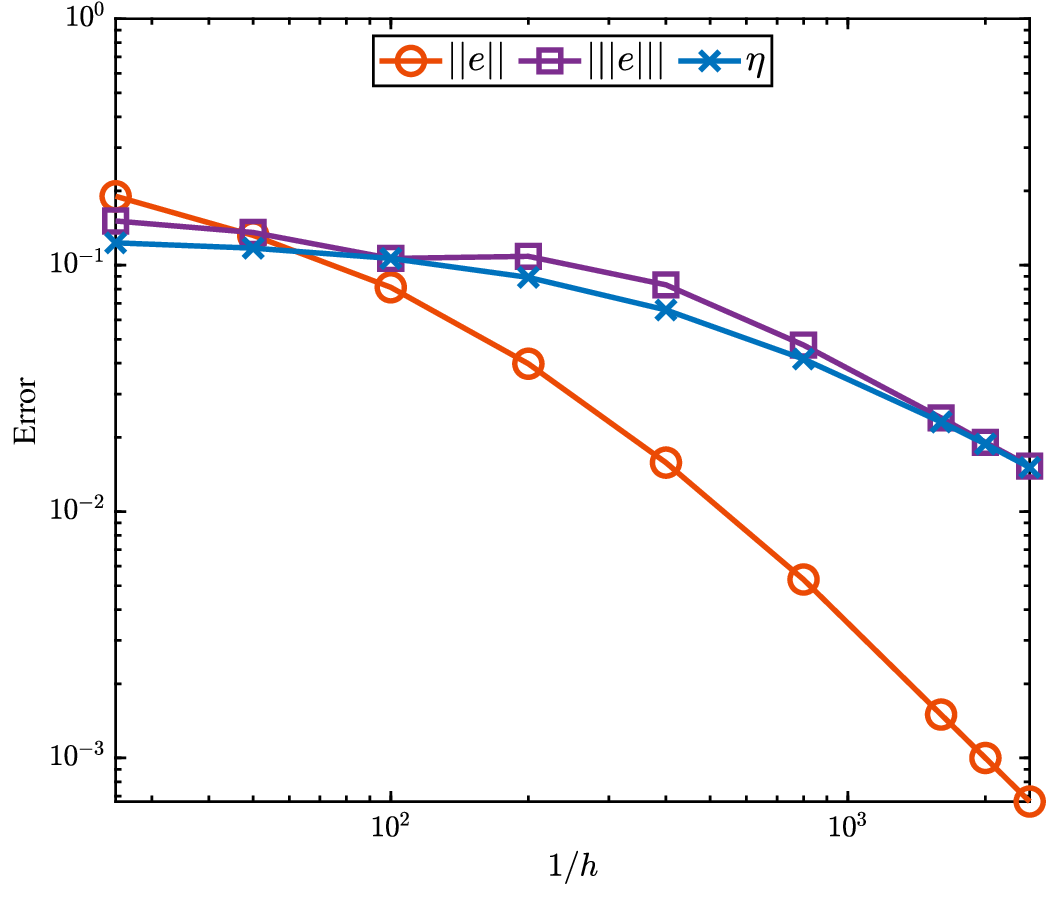}
    \includegraphics[width=0.41 \textwidth]{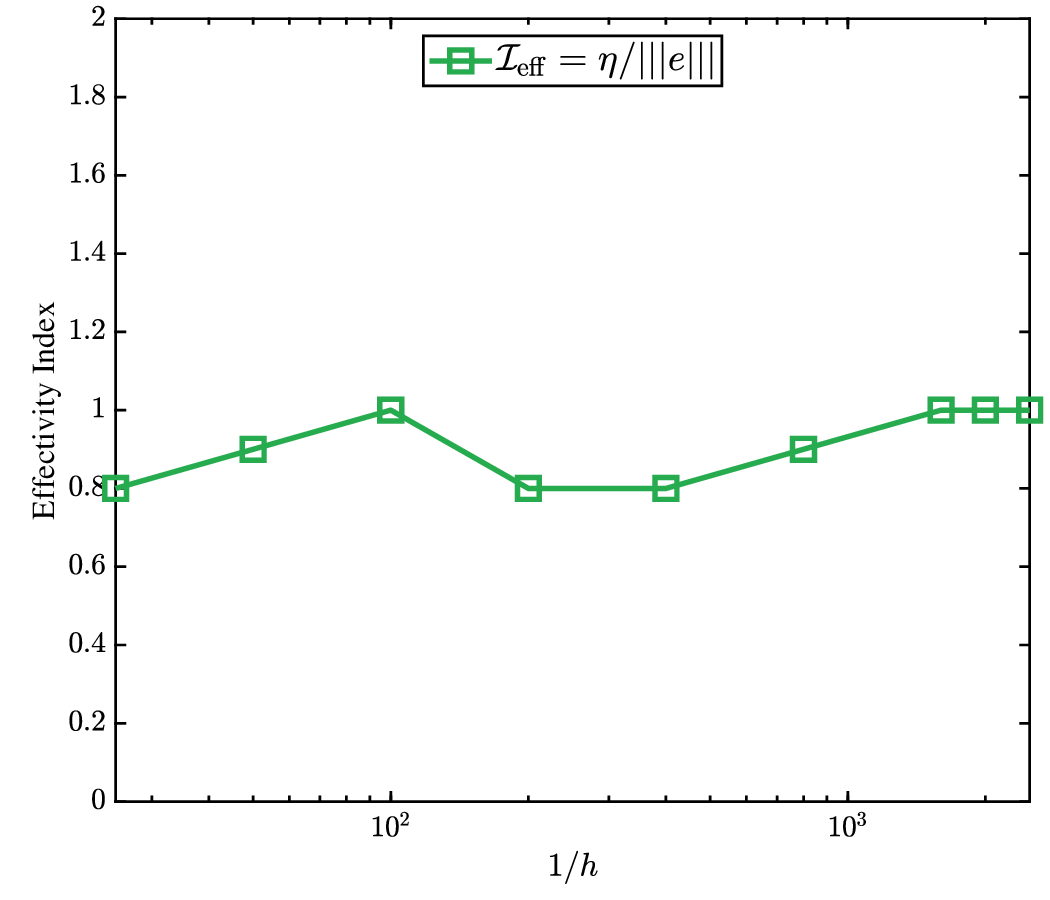}
    \includegraphics[width=0.41 \textwidth]{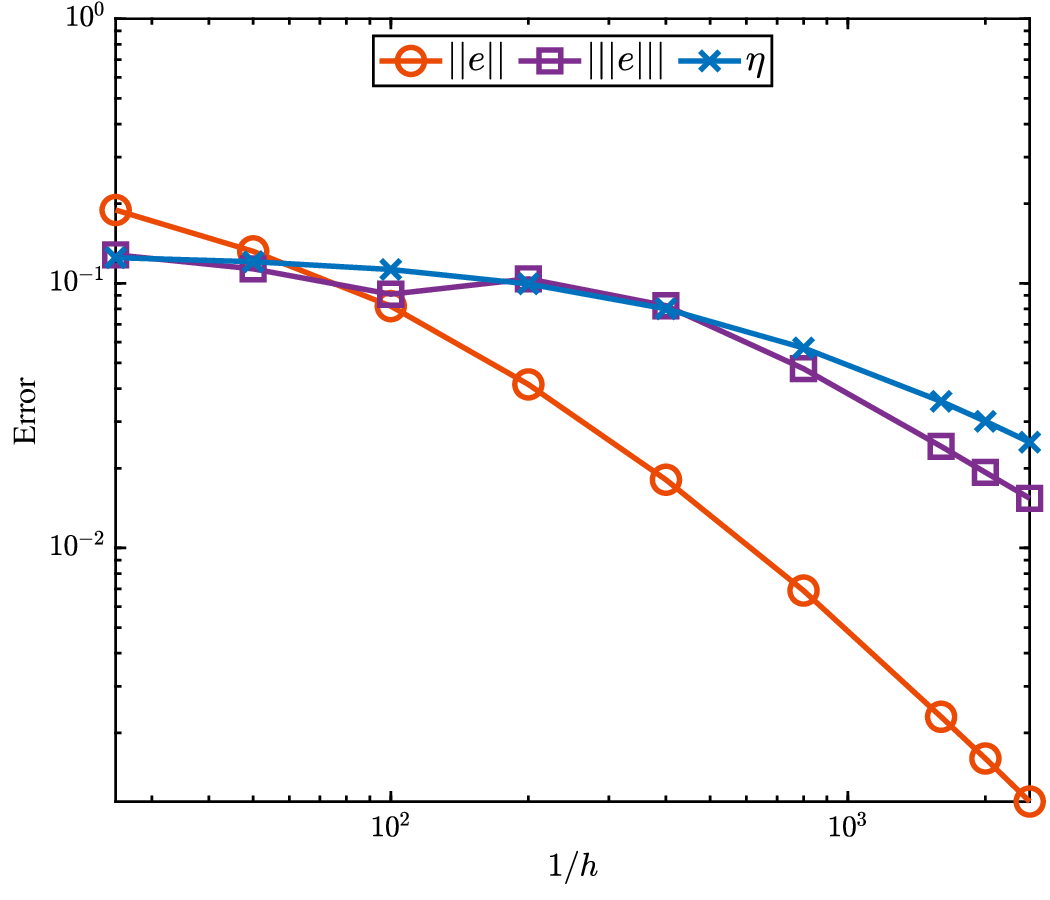}
    \includegraphics[width=0.41 \textwidth]{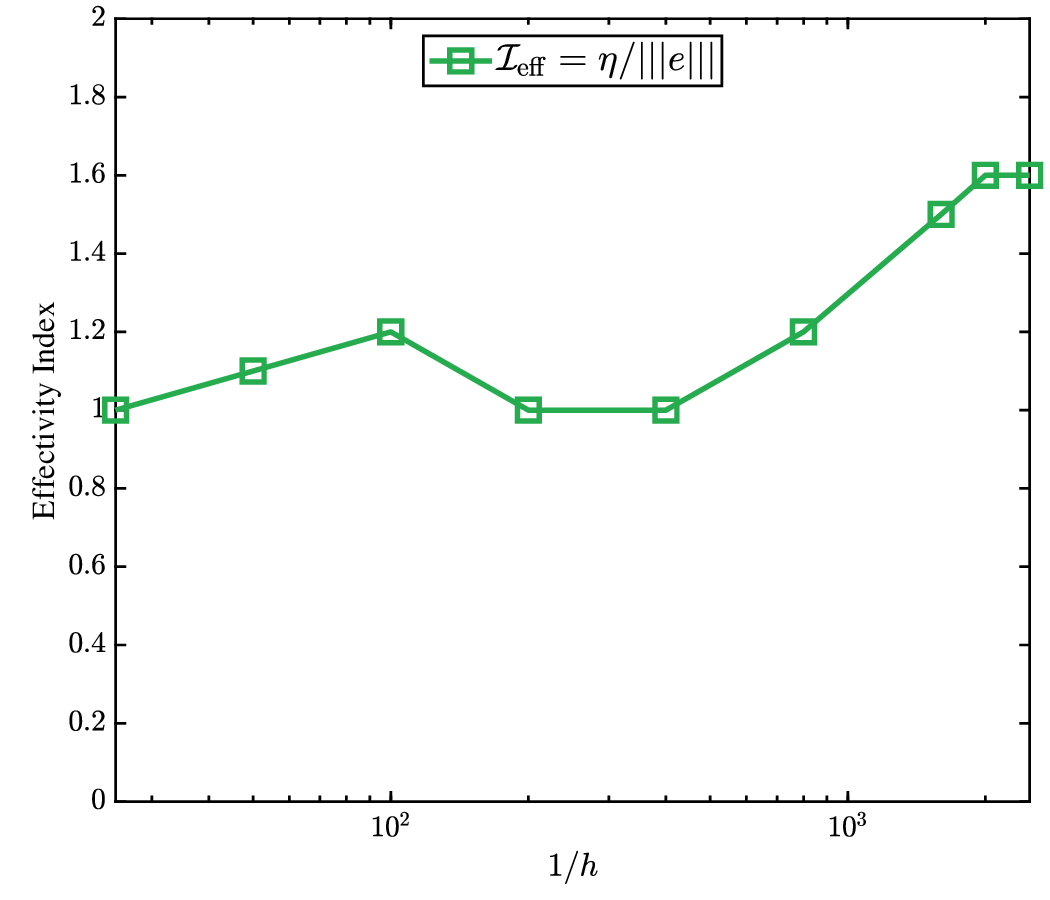}
    \caption{Results of example~\ref{example-3}. Left: The error convergence rate. Right: The global effectivity index. Top: OSGS method. Bottom: ASGS method.}
    \label{ex3errcon}
\end{figure}

\begin{figure}[h]
    \centering
    \includegraphics[width=0.41 \textwidth]{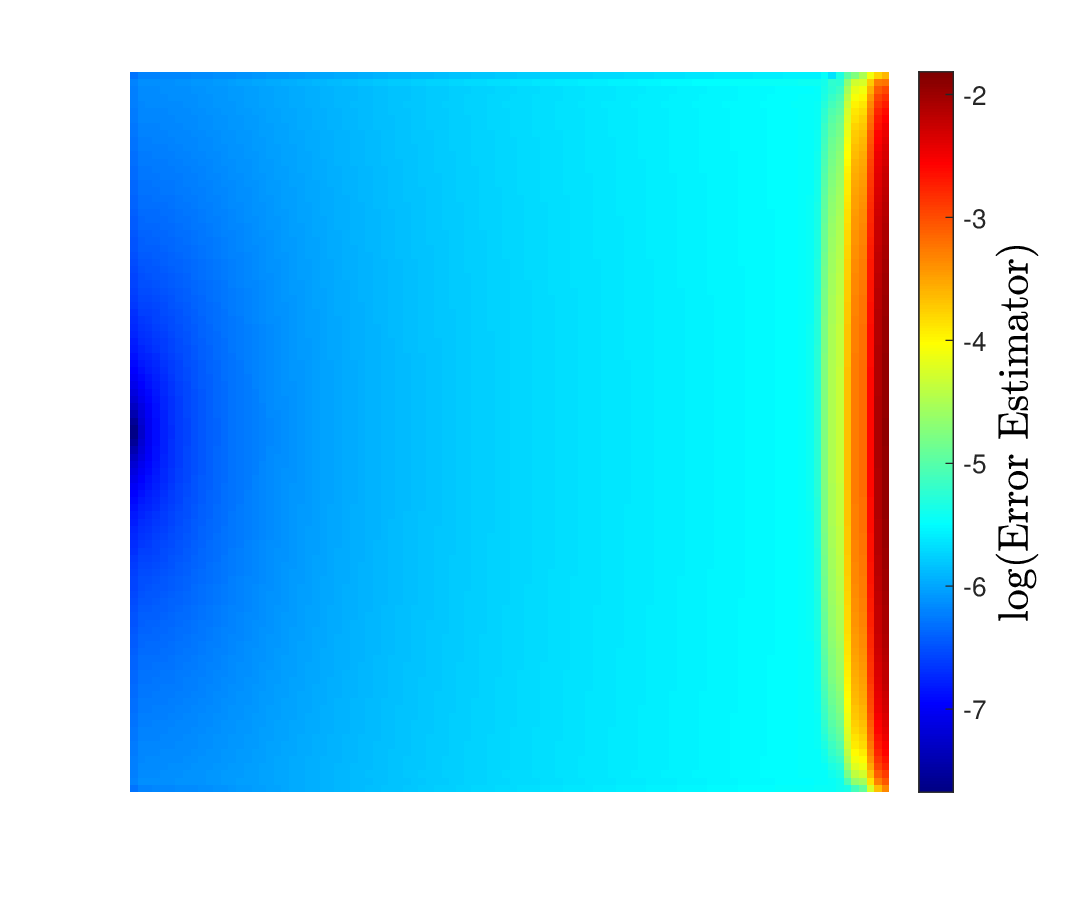}
    \includegraphics[width=0.41 \textwidth]{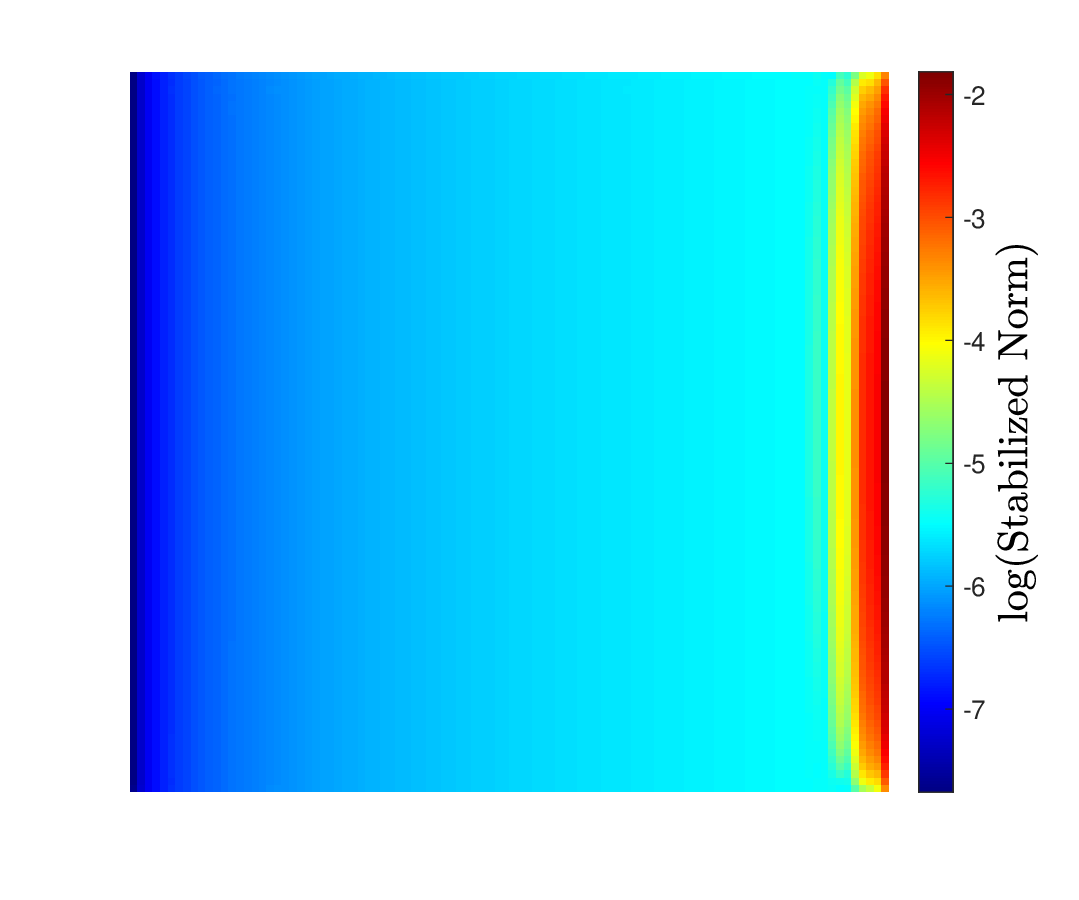}
    \includegraphics[width=0.41 \textwidth]{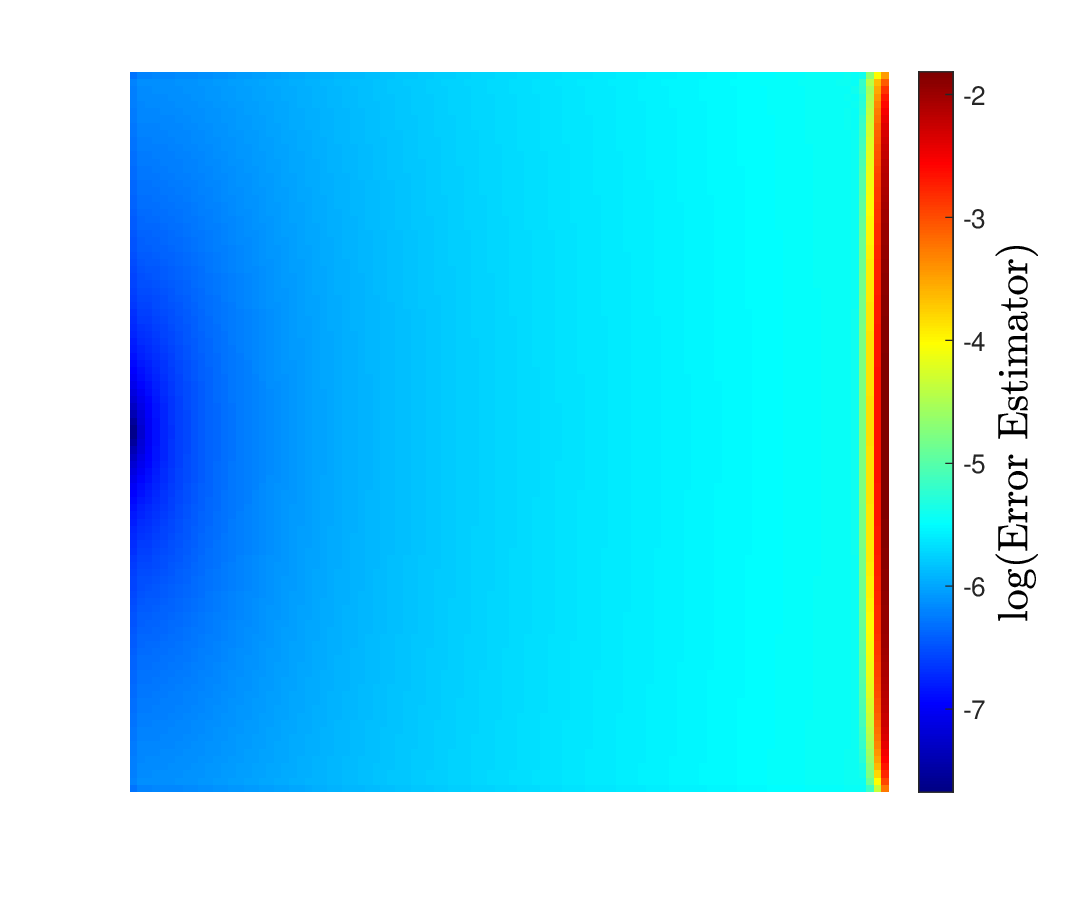}
    \includegraphics[width=0.41 \textwidth]{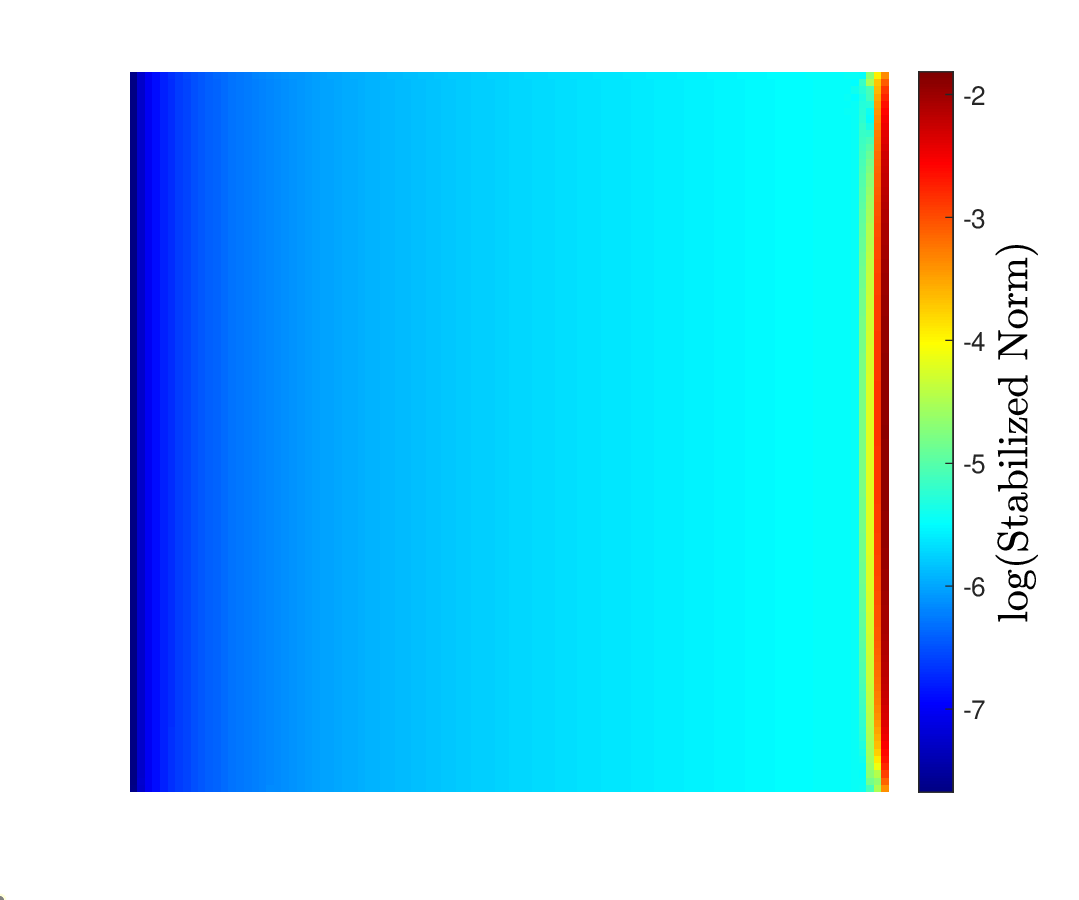}
    \caption{Results of example~\ref{example-3}. The figures on the left present the SGSs error estimator and on the right the stabilized norm error contribution. Top: OSGS method; bottom: ASGS method.}
    \label{ex3etaplot}
\end{figure}

\begin{figure}[h]
    \centering
    \includegraphics[width=0.41 \textwidth]{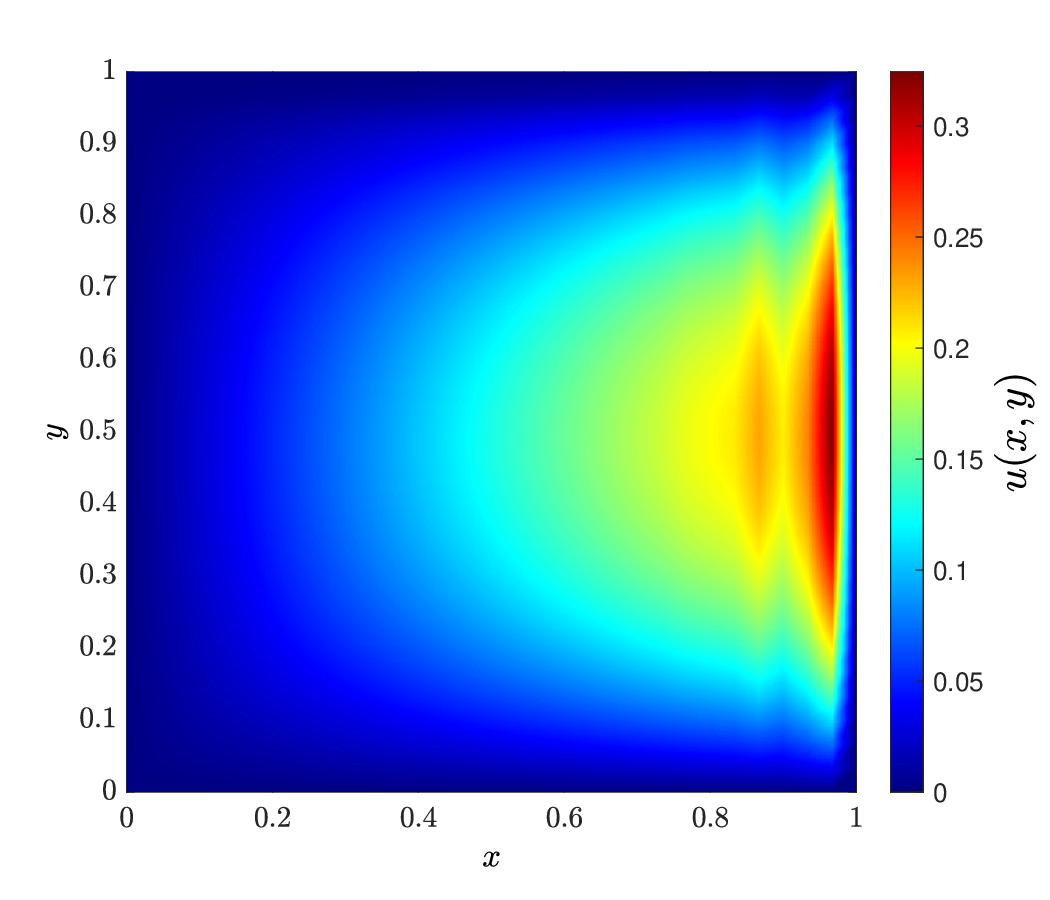}
    \includegraphics[width=0.41 \textwidth]{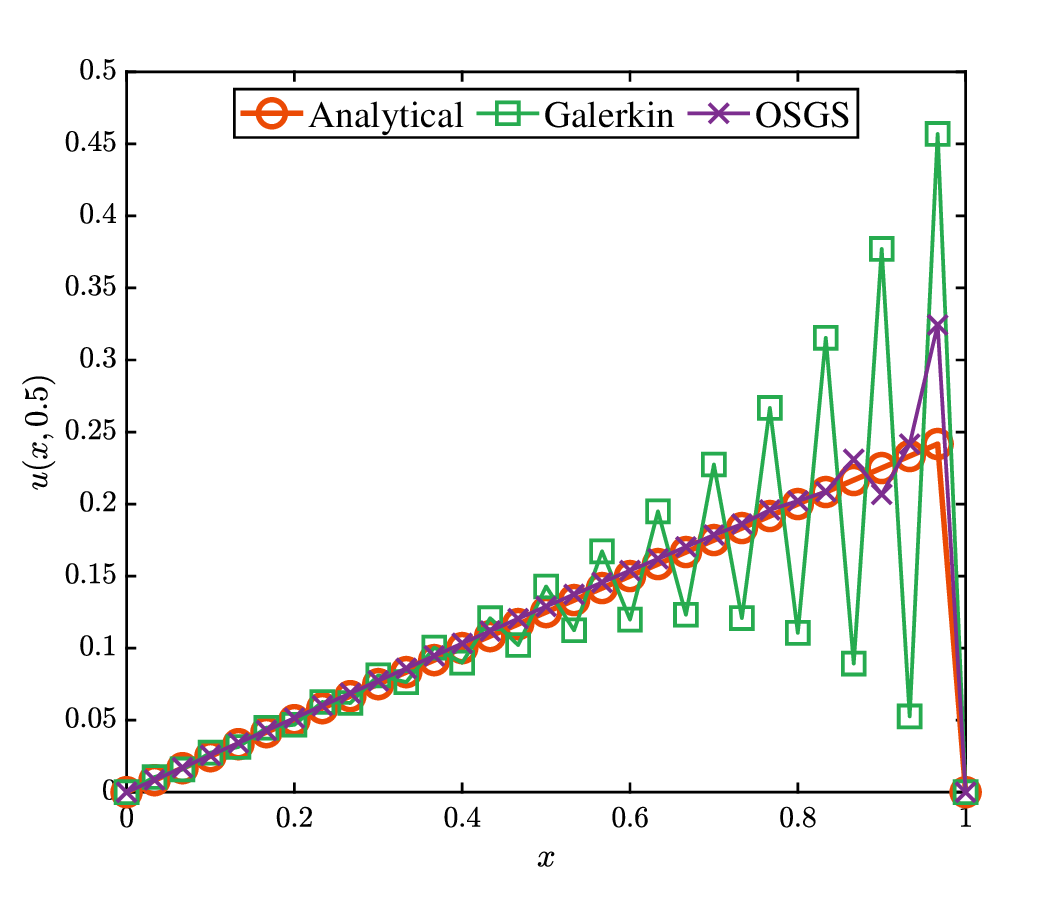}
    \caption{Results of example~\ref{example-3}. Left: Numerical solution of the scalar field. Right: Analytical, Galerkin, and OSGS solution along the line $y=0.5$.}
    \label{ex3sfsecplo}
\end{figure}

\subsection{L-shaped domain}\label{example-4}

As a final example, we study a boundary value problem on an L-shaped domain, as described in \cite{hauke2023review,john2000numerical}. We impose homogenous Dirichlet boundary conditions along the entire boundary. We choose the forcing function $f$ as
\begin{equation*}
    f=100r(r-0.5)(r-\sqrt{2}/2)
\end{equation*}
with 
\begin{equation*}
    r(x,y)=\sqrt{(x-0.5)^2+(y-0.5)^2}
\end{equation*}
The parameters in \eqref{modprob} are chosen such that the diffusion coefficient is taken as $k =10^{-6}$, the advection velocity to be $a=[1,3]$ and the reaction coefficient to be $s=1$. The computational domain is described by:
\begin{equation*}
    \Omega := \left(0,1\right) \times \left(0,1\right)\backslash\left[0.5,1\right]\times\left[0,0.5\right]
\end{equation*}
We use again a mesh of bilinear quadrilaterals to discretize this computational domain. We begin with 48 elements and successively refine the mesh by dividing each element into four. We exemplarily compare the solution using the ASGS and the OSGS methods in Fig.~\ref{ex4sfplo}. Since we do not know the exact solution of the problem, we compare the local error estimator contributions of the OSGS and the AGS methods, see Fig.~\ref{ex4etaplot}. These results were obtained using a uniform mesh consisting of 768 ($= 0.75 \cdot 32^2$) elements. We can observe the larger error on the boundary and within a shear layer in the direction of the convection originating from $(0.5,0.5)$. The boundary layer can be observed at $y=1$ in both stabilized methods. The presence of a singularity at the re-entrant corner $(0.5,0.5)$ causes the solution $u \notin H^2(\Omega)$. Therefore, we avoid computing the stabilized error norm using a fine reference solution. Instead, we refer to \cite{hauke2023review}, where the effectivity indices were assessed using $L^1$ and $L^2$ norms.

\begin{center}
    \begin{figure}[h]
        \centering
        \includegraphics[width=0.41 \textwidth]{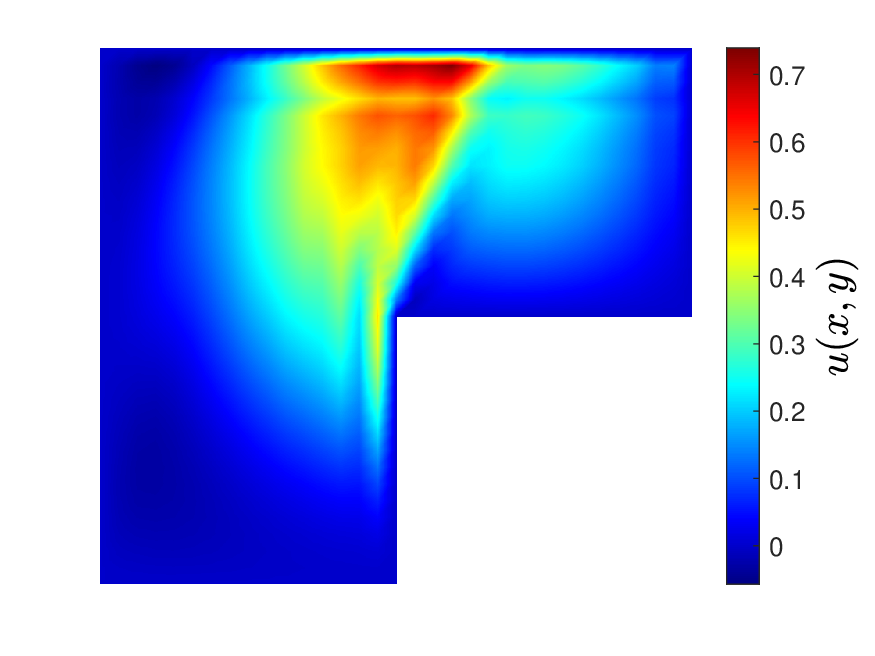}
        \includegraphics[width=0.41 \textwidth]{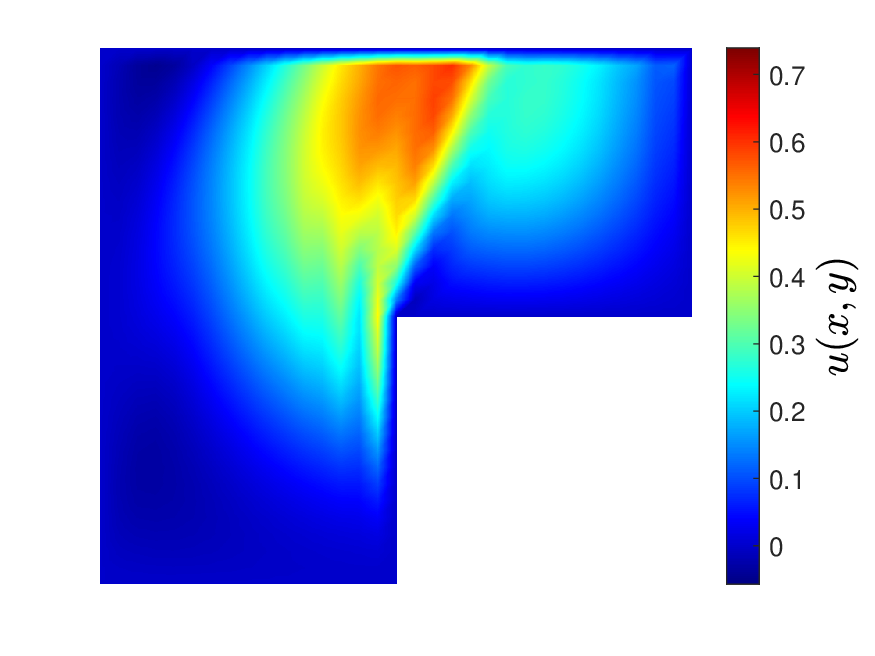}
      \caption{Numerical solution of the scalar field computed in example~\ref{example-4}. Left: OSGS method. Right: ASGS method.}
        \label{ex4sfplo}
    \end{figure}
\end{center}

\begin{center}
    \begin{figure}[h]
        \centering
        \includegraphics[width=0.41 \textwidth]{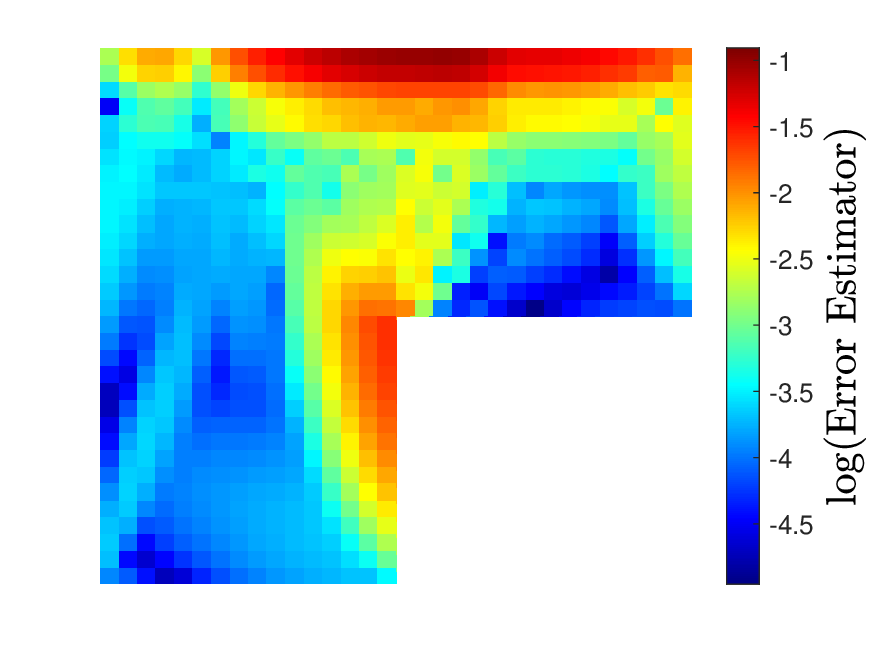}
        \includegraphics[width=0.41 \textwidth]{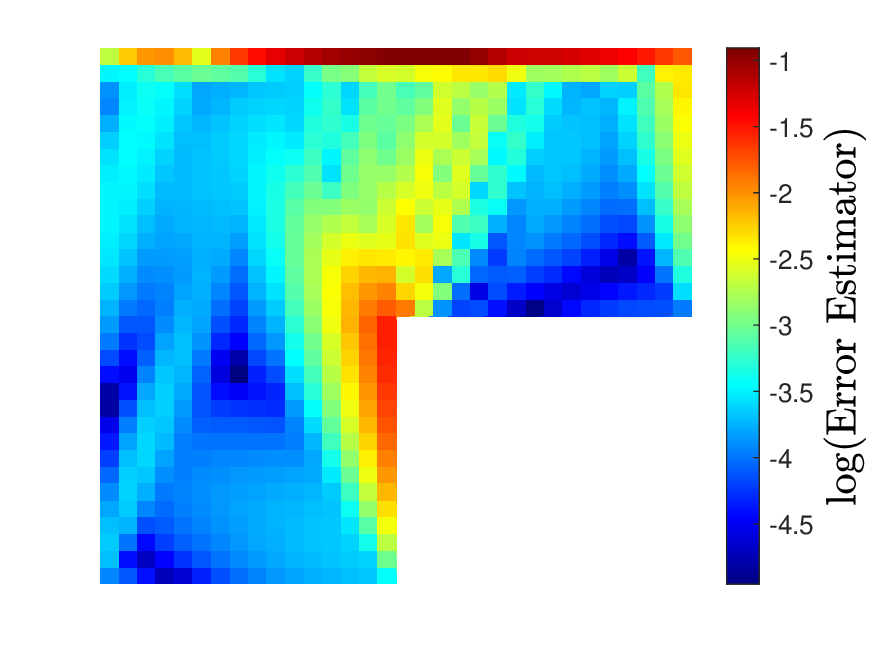}
     \caption{Results of example~\ref{example-4}. The local error estimator contribution for the L-shaped domain. Left: OSGS method. Right: ASGS method.}
        \label{ex4etaplot}
    \end{figure}
\end{center}

\section{Conclusion}

In this paper, we have proposed an a posteriori error estimate for methods falling in the VMS framework. In summary, it is an estimate {\em in the stabilized norm} of the formulation, and it states that the error behaves as {\em a scaled norm of the SGSs}, the scaling factors being the inverse of the stabilization parameters. This error estimate incorporates the contribution of two parts: the SGSs in the element interiors and on the edges. In the diffusion-dominated regime, the SGSs on the edges play a vital role in error estimation.
The error estimation is carried out by post-processing the FE solution, i.e., it falls in the category of explicit a posteriori error estimates. This method of error computation results in a lower computational cost than the APEEs based on solving an additional auxiliary problem. 

We have presented a theoretical justification for the proposed a posteriori error estimate. We have started proving sharp upper and lower bounds for the error in a norm that is not the stabilized one~(Section~\ref{sec:verf}), and then, we have proceeded with the analysis in the stabilized norm, although the estimate depends on an interpolation error of the exact solution and on an unprovable assumption~(Section~\ref{sec:john-novo}). Although not fully satisfactory from the theoretical point of view, this analysis provides a sound basis for our proposal. 

In the context of VMS methods, we have focused our attention on the OSGS formulation, with the SGSs orthogonal to the FE space, and the analysis has been carried out for this method. The numerical examples have shown that the performance of the proposed a posteriori error estimate is excellent, in all cases, the effectivity indices always converge to a value very close to $1$, which is in contrast to the ASGS method.

\section*{Acknowledgements}

R. Codina gratefully acknowledges the support received from the ICREA Acad\`emia Program, from the Catalan Government.
S. A. Khan acknowledges the financial support received from the Program Contract between CIMNE and the Catalan Government for the period 2021-2024.

\bibliographystyle{elsarticle-num}

\end{document}